\documentclass[11pt,a4paper]{article}

\title{\bf Globally linked pairs and cheapest globally rigid supergraphs}
\author{
Tibor Jord\'an\thanks{Department of Operations Research, ELTE E\"otv\"os Lor\'and University, and HUN-REN--ELTE Egerv\'ary Research Group
on Combinatorial Optimization, P\'azm\'any P\'eter s\'et\'any 1/C, 1117 Budapest, Hungary.
e-mail: {\tt tibor.jordan@ttk.elte.hu}} 
\and
Soma Vill\'anyi\thanks{HUN-REN--ELTE Egerv\'ary Research Group
on Combinatorial Optimization,
P\'azm\'any P\'eter s\'et\'any 1/C, 1117 Budapest, Hungary.
e-mail: {\tt villanyi.soma@gmail.com}}
}
\date{January 14, 2024}

\usepackage[utf8]{inputenc}
\usepackage[english]{babel}
\usepackage{amsmath}
\usepackage{amsthm}
\usepackage{amssymb}
\usepackage{geometry}\geometry{margin=1.5in}
\usepackage{a4wide}
\usepackage{bm}
\usepackage{microtype}
\usepackage{mathtools}
\usepackage{csquotes}
\usepackage[shortlabels]{enumitem}
\usepackage{algorithm}
\usepackage{algpseudocode}
\usepackage{caption}
\usepackage{thmtools}
\usepackage{thm-restate}

\theoremstyle{definition}
\newtheorem{theorem}{Theorem}[section]
\newtheorem{lemma}[theorem]{Lemma}
\newtheorem*{lemma*}{Lemma}
\newtheorem*{conjecture*}{Conjecture}
\newtheorem*{lemma''*}{``Lemma''}

\newtheorem*{claim*}{Claim}
\newtheorem{corollary}[theorem]{Corollary}

\newcommand{\R}{\mathbb{R}}

\newcommand{\tlc}{{\rm tlc}}
\newcommand{\cll}{{\rm sp}}

\providecommand{\given}{}
\newcommand{\setseparator}{%
  \mathrel{}
  \mathclose{}
  \delimsize|
  \mathopen{}
  \mathrel{}
}
\DeclarePairedDelimiterX{\Set}[1]{( }{ ) }{%
  \renewcommand{\given}{\setseparator}%
  #1%
}
\newcommand{\longsetdescription}[2][.55\displaywidth]{\parbox{#1}{#2}}
\newcommand{\longsetdescriptiontwo}[2][.45\displaywidth]{\parbox{#1}{#2}}

\begin{document}
\maketitle

\begin{abstract}
Given a graph $G$, a cost function on the non-edges of $G$, and an integer $d$, the problem of finding a cheapest globally rigid supergraph of $G$ in $\mathbb{R}^d$ is NP-hard for $d\geq 1$. For this problem, which is a common generalization of several well-studied graph augmentation problems, no approximation algorithm has previously been known for $d\geq 2$. Our main algorithmic result is a 5-approximation algorithm in the $d=2$ case. We achieve this by proving numerous new structural results on rigid graphs and globally linked vertex pairs. In particular, we show that every rigid graph in $\mathbb{R}^2$ has a tree-like structure, which conveys all the information regarding its globally rigid augmentations. Our results also yield a new, simple solution to the minimum cardinality version (where the cost function is uniform) for rigid input graphs, a problem which is known to be solvable in polynomial time.
\end{abstract}

\section{Introduction}

A $d$-dimensional {\it framework}
is a pair $(G,p)$, where $G=(V,E)$
is a graph and $p$ is a map from $V$ to $\mathbb{R}^d$.
We also say that $(G,p)$ is a {\it realization} of $G$ in
$\mathbb{R}^d$. The {\it length} of an edge $uv\in E$ in
$(G,p)$ is $||p(u)-p(v)||$, where $||.||$ denotes the Euclidean norm in $\mathbb{R}^d$.
Two frameworks
$(G,p)$ and $(G,q)$ are {\it equivalent} if corresponding edge lengths are the same, that is, 
$||p(u)-p(v)||=||q(u)-q(v)||$ holds
for all pairs $u,v$ with $uv\in E$.
The frameworks $(G,p)$ and $(G,q)$ are {\it congruent} if
$||p(u)-p(v)||=||q(u)-q(v)||$ holds
for all pairs $u,v$
with $u,v\in V$.

A $d$-dimensional framework $(G,p)$ is called {\it globally rigid} if every equivalent 
$d$-dimensional framework
$(G,q)$ is congruent to $(G,p)$. This is the same as saying that the edge lengths of $(G,p)$
uniquely determine all the pairwise distances.
It is NP-hard to test whether a given framework in $\R^d$ is globally
rigid, even for $d=1$ \cite{Saxe}.
This fundamental property of frameworks becomes more tractable if we consider generic
frameworks. A framework $(G,p)$ %
 is said to be {\it generic} if the set of
its $d|V(G)|$ vertex coordinates is algebraically independent over $\mathbb{Q}$.
It is known that in a given dimension 
the global rigidity of a generic framework $(G,p)$ depends only on $G$: either every generic realization of $G$ in $\R^d$ is globally rigid, or none of them are
\cite{Con, GHT}.
Thus, we say that a graph $G$ is {\it globally rigid} in $\R^d$ if every (or equivalently, if some) $d$-dimensional
generic realization of $G$ is globally rigid in $\R^d$.
For $d=1,2$, combinatorial characterizations and corresponding
deterministic polynomial time algorithms are known for (testing) 
global rigidity in $\R^d$.
The case $d=1$   is a folklore result:
it is not hard to see that a graph $G$ on at least three vertices is globally rigid in $\R^1$ if and only
if it is $2$-connected. For the case $d=2$, see Theorem \ref{thm}.
The existence of such a characterization
(or algorithm) for $d\geq 3$ is a major open question. 
Global rigidity is relevant in several modern applications (e.g., sensor network localization, molecular conformation), which  provides additional motivation for the study of the related optimization problems.

In this paper we consider the ($d$-dimensional)  {\it cheapest globally rigid supergraph} problem. In this problem
the input is a graph $G=(V,E)$, a cost function $c:V\times V\to \R^+\cup \{\infty\}$, and a positive integer $d$.
The goal is to 
find an edge set $E'$ on vertex set $V$ of minimum total cost, for which $G'=(V,E\cup E')$ is globally rigid in $\R^d$.
In other words, we look for a cheapest augmentation which makes $G$ globally rigid.
This problem is known to be NP-hard for all $d\geq 1$, even for metric cost functions, see \cite{JorMih}.
For $d=1$ the problem is equivalent to the well-studied cheapest 2-connected supergraph problem, for which
Khuller and Raghavachari \cite{KhRa} gave a 2-approximation algorithm.
In the case when $d=2$, the input graph has no edges and the cost function is metric (resp. if each cost is either 1 or $\infty$),
a 2-approximation (resp. $\frac{3}{2}$-approximation) algorithm was given in \cite{JorMih}. In the same
paper %
a constant factor approximation algorithm was given for every fixed $d\geq 3$,
for metric cost functions.
However, no approximation algorithms have been found for the general version for $d\geq 2$.

The main algorithmic result of this paper is 
a 5-approximation algorithm for the 2-dimensional cheapest globally rigid supergraph problem.
We also consider the case of uniform edge costs, which we call the {\it minimum size globally rigid
supergraph} problem.
Kir\'aly and Mih\'alyk\'o \cite{KMsidma} proved that this special case
can be solved in polynomial time for rigid input graphs in $\R^2$.
We shall revisit this problem and give a new algorithm and a shorter proof for a simplified
min-max theorem, based on the methods developed for the minimum cost version.

In order to obtain these algorithmic results, we prove numerous new structural results on
rigid graphs and so-called weakly globally linked vertex pairs, which may be of independent interest. 
In particular, 
we show the surprising fact that every rigid graph in $\R^2$ has a tree-like structure, which conveys all the information regarding its globally rigid augmentations.
This result is one of the main theoretical contributions of this paper. 
The proof relies on and extends the recent results on weakly globally linked pairs from \cite{wgl}. 
We can represent this tree-like structure by a tree, and use this tree 
to reduce our augmentation problems to specific augmentation problems on trees.
We also introduce and study several graph families, which turn out to be important in the analysis of rigid and globally rigid graphs, as well as
weakly globally linked pairs. In some cases we state 
the higher dimensional generalizations of our results, which may find applications in related problems 
of rigidity theory.

It turns out that the tree representation of rigid graphs can also be used in connectivity augmentation problems with chordal
input graphs.
As we shall show, a supergraph of a $k$-connected chordal graph is $(k+1)$-connected if and only if it is globally rigid in $\R^{k}$.
This will enable us to design a 2-approximation algorithm for the cheapest $(k+1)$-connected supergraph problem for $k$-connected chordal graphs.
Interestingly, the cardinality case is again polynomial time solvable: this was first pointed out in \cite{JJ} in a more general context.
A direct approach was developed in \cite{NaSa}.

The rest of the paper is organized as follows. In Section \ref{sec:pre} we introduce the necessary notions and results concerning rigid graphs and frameworks.
In Section \ref{sec:wgl} we introduce weakly globally linked pairs and show how we can use them to
reduce the globally rigid supergraph problem to so-called totally loose input graphs. 
In Section \ref{sec:Tloose}
we prove some key lemmas %
about totally loose graphs and describe their tree structure. %
In Section \ref{sec:augmenations} we describe the globally rigid supergraphs of rigid graphs.
In \mbox{Section \ref{sec:cheapest}} we give the 5-approximation algorithm for
the minimum cost globally rigid supergraph problem. In Section \ref{sec:minsize}
we study the minimum size version.
The results on chordal graphs and the algorithmic aspects are given in Sections \ref{sec:chordal} and \ref{sec:algo}, respectively.

\section{Preliminaries}
\label{sec:pre}

In the structural results on global rigidity and global linkedness the notions of
rigid frameworks, rigid graphs and the rigidity matroid play a key role.

A $d$-dimensional framework $(G,p)$ is {\it rigid} if there exists some $\varepsilon
>0$ such that, if $(G,q)$ is equivalent to $(G,p)$ and
$||p(v)-q(v)||< \varepsilon$ for all $v\in V$, then $(G,q)$ is
congruent to $(G,p)$. 
This is equivalent to requiring that every continuous motion of the vertices of $(G,p)$ in $\R^d$ that
preserves the edge lengths takes the framework to a congruent realization of $G$.
It is known that in a given dimension 
the rigidity of a generic framework $(G,p)$ depends only on $G$: either every generic realization of $G$ in $\R^d$ is rigid, or none of them are \cite{AR}.
Thus, we say that a graph $G$ is {\it rigid} in $\R^d$ if every (or equivalently, if some) $d$-dimensional
generic realization of $G$ is rigid in $\R^d$.
For $d=1,2$, combinatorial characterizations and corresponding
deterministic polynomial time algorithms are known for (testing) rigidity in $\R^d$, see e.g. \cite{laman}.
The existence of such a characterization
(or algorithm) for $d\geq 3$ is a major open question.

Let $(G,p)$ be a realization of a graph $G=(V,E)$ in $\R^d$.
The \emph{rigidity matrix} of the framework $(G,p)$
is the matrix $R(G,p)$ of size
$|E|\times d|V|$, where, for each edge $uv\in E$, in the row
corresponding to $uv$,
the entries in the $d$ columns corresponding to vertices $u$ and $v$ contain
the $d$ coordinates of
$(p(u)-p(v))$ and $(p(v)-p(u))$, respectively,
and the remaining entries
are zeros. 
The rigidity matrix of $(G,p)$ defines
the \emph{rigidity matroid}  of $(G,p)$ on the ground set $E$
by linear independence of the rows. %
It is known that any pair of generic frameworks
$(G,p)$ and $(G,q)$ have the same rigidity matroid.
We call this the $d$-dimensional \emph{rigidity matroid}
${\cal R}_d(G)=(E,r_d)$ of the graph $G$.

We denote the rank of ${\cal R}_d(G)$ by $r_d(G)$.
A graph $G=(V,E)$ is \emph{${\cal R}_d$-independent} if $r_d(G)=|E|$ and it is an \emph{${\cal R}_d$-circuit} if it is not ${\cal R}_d$-independent but every proper 
subgraph $G'$ of $G$ is ${\cal R}_d$-independent. 
An edge $e$ of $G$ is an \emph{${\cal R}_d$-bridge in $G$}
if  $r_d(G-e)=r_d(G)-1$ holds. Equivalently, $e$ is an ${\cal R}_d$-bridge in $G$ if it is not contained in any subgraph of $G$ that is an ${\cal R}_d$-circuit.

The following characterization of rigid graphs is due to Gluck.
\begin{theorem}\label{theorem:gluck}
\cite{Gluck}
\label{combrigid}
Let $G=(V,E)$ be a graph with $|V|\geq d+1$. Then $G$ is rigid in $\R^d$
if and only if $r_d(G)=d|V|-\binom{d+1}{2}$.
\end{theorem}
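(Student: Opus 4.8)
The plan is to pass through \emph{infinitesimal rigidity}, which converts the a priori analytic rigidity condition into a rank condition on the rigidity matrix, and then to invoke the Asimow--Roth theorem \cite{AR} to legitimize this passage for generic frameworks.

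First I would fix a generic realization $(G,p)$ and recall that the kernel of the rigidity matrix $R(G,p)$ consists of the \emph{infinitesimal motions} of the framework, namely the maps $q\colon V\to\R^d$ satisfying $(p(u)-p(v))\cdot(q(u)-q(v))=0$ for every edge $uv\in E$. Among these are the \emph{trivial} infinitesimal motions, obtained by differentiating the rigid motions of $\R^d$ (translations and rotations) at the configuration $p$. The central observation is a dimension count: the trivial motions always lie in $\ker R(G,p)$, and whenever the points $\{p(v):v\in V\}$ affinely span $\R^d$ they form a subspace of dimension exactly $d+\binom{d}{2}=\binom{d+1}{2}$, coming from the $d$ translations together with the $\binom{d}{2}$ independent infinitesimal rotations. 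The hypothesis $|V|\geq d+1$ is precisely what guarantees, for generic $p$, that the image affinely spans $\R^d$, so this count is valid.

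Next I would call $(G,p)$ \emph{infinitesimally rigid} if $\ker R(G,p)$ contains only trivial motions. By the dimension count above, for generic $p$ with $|V|\geq d+1$ this is equivalent to $\dim\ker R(G,p)=\binom{d+1}{2}$, hence by rank--nullity to $\operatorname{rank} R(G,p)=d|V|-\binom{d+1}{2}$. Since $r_d(G)$ is by definition the rank of $R(G,p)$ at a generic $p$ (and this rank is the same for all generic realizations, as recalled above), the arithmetic condition in the statement is thereby identified exactly with infinitesimal rigidity. It then remains only to connect infinitesimal rigidity back to rigidity itself, and for this I would cite the Asimow--Roth theorem \cite{AR}, which asserts that for a generic framework rigidity and infinitesimal rigidity coincide. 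Granting it, the desired equivalence is the chain: $G$ is rigid in $\R^d$ iff some (equivalently every) generic realization $(G,p)$ is rigid, iff $(G,p)$ is infinitesimally rigid, iff $\operatorname{rank} R(G,p)=d|V|-\binom{d+1}{2}$, iff $r_d(G)=d|V|-\binom{d+1}{2}$.

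The step I expect to be the genuine obstacle is the Asimow--Roth input, since the equivalence of rigidity and infinitesimal rigidity for generic configurations is the one nontrivial analytic ingredient; the remainder is essentially linear algebra. The only subtlety to handle carefully is that genericity is used twice and must be used compatibly: once to force the trivial-motion space to attain its maximal dimension $\binom{d+1}{2}$ (via the affine-spanning property ensured by $|V|\geq d+1$), and once to apply Asimow--Roth. Both conditions hold simultaneously on the dense set of generic configurations, so there is no conflict, and the proof closes.
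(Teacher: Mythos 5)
Your argument is correct, but note that the paper does not prove this statement at all: it is quoted as a known theorem with a citation to \cite{Gluck}, so there is no internal proof to compare against. What you have written is the standard modern derivation of Gluck's characterization: identify $\ker R(G,p)$ with the space of infinitesimal motions, observe that for $|V|\geq d+1$ a generic configuration affinely spans $\R^d$ so the trivial motions form a subspace of dimension exactly $d+\binom{d}{2}=\binom{d+1}{2}$, conclude via rank--nullity that infinitesimal rigidity is equivalent to $\operatorname{rank}R(G,p)=d|V|-\binom{d+1}{2}$, and then invoke the Asimow--Roth theorem \cite{AR} to equate rigidity with infinitesimal rigidity at a generic (hence regular) configuration. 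All of these steps are sound, and you correctly flag the two distinct uses of genericity (affine spanning and regularity) as compatible. The only caveat worth recording is that the heavy lifting is delegated to \cite{AR}, exactly as the paper delegates the whole statement to \cite{Gluck}; within the conventions of this paper that is an acceptable level of rigor for a background result.
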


A graph is \emph{minimally rigid} in $\R^d$ if it is rigid in $\R^d$ but 
$G-e$ is not rigid in $\R^d$ for
every edge $e$ of $G$. 
It follows from Theorem \ref{theorem:gluck} that every minimally rigid graph in $\R^d$ on $|V|\geq d+1$ vertices has exactly $d|V| - \binom{d+1}{2}$ edges.

Let $G=(V,E)$ be a graph and $\{u,v\}$ be a pair of vertices of $G$. 
An induced subgraph $G[X]$ (and the set $X$), for some $X\subseteq V$, is said to be
{\it $(u,v)$-rigid} in $\R^d$ (or  $\R^d$-$(u,v)$-rigid, or simply $(u,v)$-rigid, if $d$ is clear from the context),
if $G[X]$ is rigid in $\R^d$ and $u,v\in X$. Similarly, for a vertex set $V_0\subseteq V$,  $G[X]$ (and the set $X$) is said to be $V_0$-rigid in $\R^d$ if $G[X]$ is rigid in $\R^d$ and $V_0\subseteq X$.
The pair $\{u,v\}$ is called {\it linked} in $G$ in $\R^d$ if
$r_d(G+uv)=r_d(G)$ holds.  It is known that a pair
$\{u,v\}$ is linked in $G$ in $\R^2$ if and only if there exists a $(u,v)$-rigid subgraph of $G$.  For more details on the $2$-dimensional rigidity matroid, see \cite{Jmemoirs}.
A rigid graph $G$ (on at least two vertices) is called
{\it redundantly rigid} in $\R^d$ if $G-e$ is rigid
in $\R^d$
for
all $e\in E(G)$.

Let ${\cal M}$ be a matroid on ground set $E$. 
We can define a relation on the pairs of elements of $E$ by
saying that $e,f\in E$ are
equivalent if $e=f$ or there is a circuit $C$ of ${\cal M}$
with $\{e,f\}\subseteq C$.
This defines an equivalence relation. The equivalence classes are 
the \emph{connected components} of ${\cal M}$.
The matroid is \emph{connected} if 
it has only one connected component.
A graph $G=(V,E)$ is \emph{${\cal R}_d$-connected} if ${\cal R}_d(G)$ is connected.
We shall use the well-known fact that 
if $v$ is a vertex of degree at most $d$ in $G$, then every edge incident with $v$ is an
${\cal R}_d$-bridge in $G$. Hence the addition of a new vertex of degree $d$ to a rigid graph $G$ 
in $\R^d$ preserves rigidity (c.f. Theorem \ref{theorem:gluck}).

The characterization of globally rigid graphs in $\R^2$ is as follows.

\begin{theorem} \cite{JJconnrig}
\label{thm} Let $G$ be a graph on at least four vertices.
Then
$G$ is globally rigid in $\R^2$ if and only if
$G$ is $3$-connected and ${\cal R}_2$-connected.
\end{theorem}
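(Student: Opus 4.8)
The plan is to establish the two implications separately, reducing each to standard geometric and matroidal machinery. Write $n=|V(G)|$ and recall $n\ge 4$. For necessity I would lean on Hendrickson-type arguments showing that global rigidity forces both high connectivity and redundant rigidity, then translate the latter into ${\cal R}_2$-connectivity. For sufficiency, the engine is Connelly's sufficient condition via equilibrium stress matrices, combined with an inductive construction of $3$-connected, ${\cal R}_2$-connected graphs.

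\textbf{Necessity.} Suppose $(G,p)$ is a generic globally rigid framework in $\R^2$. First, $G$ must be rigid, since a non-rigid generic framework admits a nontrivial continuous flex to an equivalent but non-congruent realization. Next I would show $G$ is $3$-connected: if $\{a,b\}$ were a $2$-separator, then reflecting the part of $p$ on one side of the line through $p(a),p(b)$ preserves every edge length (each edge lies within one side or touches only $a,b$) but yields a non-congruent realization, so $G$ would not be globally rigid; here $n\ge 4$ guarantees both sides are nonempty. For the matroidal condition I would first prove \emph{redundant rigidity}: if some edge $e=uv$ were an ${\cal R}_2$-bridge, then $G-e$ would fail the rank count of Theorem \ref{theorem:gluck} and hence be non-rigid, and flexing $G-e$ while monitoring the single length $\|p(u)-p(v)\|$ produces, by a dimension and continuity argument, a second equivalent realization of $G$ --- a contradiction. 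Thus $G$ is rigid with no ${\cal R}_2$-bridges. The remaining, more delicate step is to upgrade ``no ${\cal R}_2$-bridges'' to full ${\cal R}_2$-connectivity; I would do this using $3$-connectivity, arguing that a nontrivial decomposition of ${\cal R}_2(G)$ into connected components would force the corresponding rigid subgraphs to overlap in at most two vertices, producing a $2$-separator and contradicting $3$-connectivity.

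\textbf{Sufficiency.} This is the substantial direction. My tool is Connelly's theorem \cite{Con}: a generic framework in $\R^2$ is globally rigid as soon as it carries an equilibrium stress matrix of the maximum possible rank $n-3$. So it suffices to show that every $3$-connected, ${\cal R}_2$-connected graph admits a generic realization with such a stress matrix, and I would argue this by induction via an inductive (ear-type) construction of the class: every such graph arises from $K_4$ by a sequence of edge additions and $1$-extensions (delete an edge $xy$, add a new vertex adjacent to $x$, $y$, and a third vertex) in which each intermediate graph stays in the class. The base case $K_4$ is globally rigid by direct inspection, its generic realizations carrying a rank-$1$ stress matrix. Edge addition preserves global rigidity trivially, since it only adds constraints.

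\textbf{Main obstacle.} I expect essentially all the difficulty to concentrate in two places. First, the combinatorial claim that an admissible edge deletion or reverse $1$-extension always exists keeping the graph simultaneously $3$-connected and ${\cal R}_2$-connected is delicate, and rests on the ear-decomposition theory of connected matroids together with careful connectivity bookkeeping. Second, and hardest, is the geometric lemma that a $1$-extension preserves the existence of a maximum-rank stress matrix: given such a matrix for the smaller graph, one must build a stress for the extended graph $G'$ and show, through a rank and continuity argument within Connelly's stress-matrix framework, that rank $n-3$ is attained, with genericity invoked precisely to guarantee maximality. This stress-preservation step is the algebraic heart of the proof and is where I would expect to spend most of the effort.
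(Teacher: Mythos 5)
The paper does not prove this statement at all: it is imported verbatim from Jackson and Jord\'an \cite{JJconnrig}, so there is no internal proof to compare against. That said, your outline is an accurate roadmap of how the theorem is actually proved in the literature: necessity is Hendrickson's theorem (the reflection argument for $3$-connectivity and the flex-and-monitor argument for redundant rigidity), and sufficiency is exactly the Jackson--Jord\'an strategy of combining Connelly's stress-matrix condition \cite{Con} with an inductive construction of $3$-connected, ${\cal R}_2$-connected graphs from $K_4$ by edge additions and $1$-extensions. You have also correctly located the two hard points: the combinatorial reduction theorem and the preservation of a rank-$(n-3)$ equilibrium stress under $1$-extension (the latter is Connelly's contribution, the former Jackson--Jord\'an's).

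Two cautions on the parts you compress. First, in the necessity direction, upgrading ``redundantly rigid and $3$-connected'' to ``${\cal R}_2$-connected'' is not the quick $2$-separator argument you sketch; it is a full theorem of \cite{JJconnrig} whose proof is a delicate minimal-counterexample analysis of how ${\cal R}_2$-components can meet, and distinct ${\cal R}_2$-components of a rigid graph can in fact share more than two vertices, so the one-line heuristic does not go through as stated. Second, the claim that an admissible reverse $1$-extension or edge deletion always exists within the class is the combinatorial core of \cite{JJconnrig} (their analysis of vertices of degree three in ${\cal R}_2$-circuits and of ``ear decompositions'' of connected matroids), and is several pages of work. So your proposal is the right proof in outline, but as written it is a plan rather than a proof; for the purposes of this paper, citing \cite{JJconnrig} as the authors do is the appropriate resolution.
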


For more details on globally rigid graphs and frameworks see \cite{JW}.

\section{Weakly globally linked pairs}
\label{sec:wgl}

Our approach is based on the notion of {\it (weakly) globally linked} pairs of vertices and the results of \cite{wgl}. 
A pair of vertices $\{u,v\}$ in a framework $(G,p)$ is called {\it globally linked in $(G,p)$} if for every equivalent framework $(G,q)$ we have
$||p(u)-p(v)||=||q(u)-q(v)||$. It was pointed out in \cite{JJS} that
global linkedness in $\R^d$ is not a generic property (for $d\geq 2$): 
a vertex pair may be globally linked in some generic $d$-dimensional realization of $G$ without being globally linked in all generic realizations. 
See Figure \ref{fig:gen1}. 
We say that
a pair $\{u,v\}$ is {\it globally linked in $G$} in $\R^d$ if it is globally linked in all generic $d$-dimensional frameworks $(G,p)$. 
We call a pair $\{u,v\}$ {\it weakly globally linked in $G$} in $\R^d$ if there exists a generic $d$-dimensional framework $(G,p)$
in which $\{u,v\}$ is globally linked. If  $\{u,v\}$ is not weakly globally linked in $G$, then it is called {\it globally loose} in $G$.
It is immediate from the definitions that $G$ is globally rigid in $\R^d$ if and only if all pairs of vertices of $G$ are globally linked in $G$ in $\R^d$.
We say that $G$ is {\it totally loose}, if 
$\{u,v\}$ is globally loose in $G$ for all $u,v\in V$ with $uv\notin E$.
This is equivalent to saying that in every generic realization $(G,p)$ the only
globally linked pairs are the adjacent vertex pairs.

\begin{figure}[t]
\begin{center}
\includegraphics[scale=1.1]{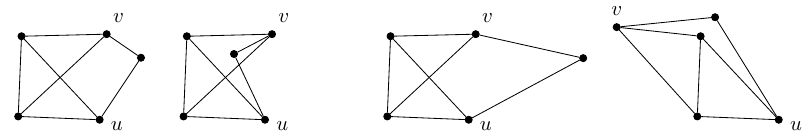}
\vspace{-0.5cm}
\end{center}
\caption{Two pairs of equivalent generic frameworks of a graph $G$ in $\R^2$. The vertex pair $\{u,v\}$ is globally linked in the two frameworks on the left. On the other hand, $\{u,v\}$ is not globally linked in the two frameworks on the right. Thus $\{u,v\}$ is weakly globally linked, but not globally linked in $G$ in $\R^2$.}
\label{fig:gen1}
\end{figure}

The case $d=1$ is exceptional and well-understood. Global linkedness in $\R^1$ is a generic property: a pair is globally linked
in $G$ in $\R^1$ if and only if there is a cycle in $G$ that contains both
vertices. Otherwise, it is globally loose. A graph is totally loose in $\R^1$ if and only if each of its 2-connected components is complete.
For $d\geq 3$ no combinatorial (or efficiently testable) characterization is known for globally linked or weakly globally linked pairs in graphs. 
In a recent paper \cite{wgl} we characterized the weakly globally linked pairs in graphs in the $d=2$ case and showed that
it can be tested in $O(|V|^2)$ time whether a given pair $\{u,v\}$ of vertices is weakly globally linked in a graph $G=(V,E)$ in $\R^2$.
This and some further results  are given in the following subsections.

\subsection{The totally loose closure of a graph}
We will need several new notions and structural results related to weakly globally linked and globally loose pairs.
We define 
the {\it  totally loose closure} of a graph $G=(V,E)$ in $\R^d$ as 
the minimal totally loose supergraph of $G$ in $\R^d$ and denote it by $\tlc_d(G)$. 
The next lemma shows that this notion is well-defined.

\begin{lemma}
Let $G=(V,E)$ be a graph. Then there is a unique minimal totally loose supergraph of $G$ in $\R^d$.
\end{lemma}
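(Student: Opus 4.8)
The plan is to show that the family $\mathcal{F}$ of totally loose supergraphs of $G$ on the fixed vertex set $V$ is closed under (edge-set) intersection. Since there are only finitely many graphs on $V$, the family $\mathcal{F}$ is finite, and it is nonempty because the complete graph $K_V$ is vacuously totally loose (it has no non-adjacent pairs). Once closure under intersection is established, the intersection of all members of $\mathcal{F}$ is again a member of $\mathcal{F}$, it is contained in every totally loose supergraph of $G$, and hence it is the unique minimum — in particular the unique minimal — totally loose supergraph. Defining $\tlc_d(G)$ to be this graph then makes the notion well-defined.

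The technical core is a monotonicity property of weak global linkedness under edge addition, which I would isolate first. Suppose $H=(V,E_1)$ and $H'=(V,E_2)$ with $E_1\subseteq E_2$, and let $\{u,v\}$ be weakly globally linked in $H$; I claim it is weakly globally linked in $H'$. Let $(H,p)$ be a generic realization witnessing this, so $\{u,v\}$ is globally linked in $(H,p)$. The same map $p$ is also a generic realization of $H'$, since genericity depends only on the coordinates of $p$ and not on the edge set. Now take any realization $(H',q)$ equivalent to $(H',p)$: because $E_1\subseteq E_2$, matching edge lengths on $E_2$ forces matching edge lengths on $E_1$, so $(H,q)$ is equivalent to $(H,p)$, and global linkedness of $\{u,v\}$ in $(H,p)$ gives $\|p(u)-p(v)\|=\|q(u)-q(v)\|$. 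Hence $\{u,v\}$ is globally linked in the generic framework $(H',p)$, proving weak global linkedness in $H'$. Taking the contrapositive, global looseness of a non-adjacent pair is preserved under edge deletion: if $H\subseteq H'$ and $\{u,v\}$ is a non-adjacent pair that is globally loose in the larger graph $H'$, then it is globally loose in $H$.

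With this in hand the intersection step is immediate. Let $H_1=(V,E_1)$ and $H_2=(V,E_2)$ be totally loose supergraphs of $G$ and set $H=(V,E_1\cap E_2)$; clearly $E(G)\subseteq E_1\cap E_2$, so $H\supseteq G$. Take any non-edge $\{u,v\}$ of $H$, so $uv\notin E_1$ or $uv\notin E_2$; say $uv\notin E_1$. Then $\{u,v\}$ is globally loose in $H_1$ because $H_1$ is totally loose, and since $H$ is a subgraph of $H_1$ in which $\{u,v\}$ is still a non-edge, the edge-deletion form of the monotonicity shows $\{u,v\}$ is globally loose in $H$. Thus every non-adjacent pair of $H$ is globally loose, i.e. $H\in\mathcal{F}$. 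Intersecting the finitely many members of $\mathcal{F}$ one by one yields the desired minimum element.

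The step I expect to require the most care is the monotonicity lemma, specifically the verification that the witnessing realization transfers in the correct direction: one must check that $p$ stays generic for the larger graph (automatic, since the vertex set is unchanged) and that the inclusion $E_1\subseteq E_2$ genuinely sends equivalent realizations of $H'$ to equivalent realizations of $H$, and not the reverse. Everything else is routine bookkeeping with the definitions of \emph{globally loose} and \emph{totally loose}. I would also remark that the whole argument is dimension-independent, so the lemma holds for every $d$ as stated.
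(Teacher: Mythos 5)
Your proof is correct, and it takes a genuinely different route from the paper's. The paper argues constructively: it iteratively adds all non-adjacent weakly globally linked pairs until none remain, and shows by induction that every edge added along the way must belong to every totally loose supergraph, so the resulting graph is the unique minimal one. You instead prove that the (finite, nonempty) family of totally loose supergraphs on $V$ is closed under edge-set intersection, which abstractly forces a unique minimum element. The technical heart of both arguments is the same monotonicity fact — that weak global linkedness of a pair is preserved under adding edges on the same vertex set (equivalently, global looseness of a non-adjacent pair is inherited by spanning subgraphs) — which the paper uses implicitly in its ``hence, by induction'' step but never states or proves, whereas you isolate it and verify it directly from the definitions (the witnessing generic $p$ stays generic since genericity depends only on the coordinates, and equivalence over the larger edge set implies equivalence over the smaller one). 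Your version is self-contained and makes the key lemma explicit; the paper's version has the advantage of simultaneously producing the concrete iterative description $\tlc_d(G)=G+\{e_1,\dots,e_k\}$ that it reuses later, which your abstract intersection argument does not by itself supply.
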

\begin{proof}
Suppose that we 
add all the edges $uv$ to $G$ for which $\{u,v\}$ is a non-adjacent weakly globally linked
pair in $G$ in $\R^d$, and then repeat this in the augmented graph, and so on, until
the resulting graph has no such pairs. Let these new edges be denoted by $e_1,\dots, e_k$ in the order in which they were added and let $E'=\{e_1,\dots, e_k\}$. Then $e_{i+1}$ is weakly globally linked in $G+\{e_1,\dots,e_i\}$ for $1\leq i\leq k-1$.  Hence, by induction on $i$, $e_i$ is an edge in every totally loose supergraph of $G$. 
Since $G+E'$ is totally loose, it follows that it is the unique minimal totally loose supergraph of $G$ in $\R^d$ (and hence we have $\tlc_d(G)=G+E'$).  
\end{proof}

The following lemma, which is a direct consequence of \cite[Lemma 3.2]{wgl}, is pivotal in this paper.

\begin{lemma}
\label{J}\cite{wgl}
Let $G=(V,E)$ be a graph and let $F$ be a set of edges in the complement of $E$. Then the following hold.\begin{enumerate}[(a)]\vspace{-0.2 cm}
\item $G+F$ is globally rigid in $\R^d$ if and only if $\tlc_d (G)+F$ is globally rigid in $\R^d$.\vspace{-0.2 cm}
\item $G$ is globally rigid in $\R^d$ if and only if %
$\tlc_d (G)$ is complete.
\end{enumerate}
\end{lemma}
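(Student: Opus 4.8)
The plan is to read the cited result \cite[Lemma 3.2]{wgl} as a \emph{substitution principle} and then push it through the inductive construction of $\tlc_d(G)$ provided by the previous lemma. Concretely, I would use the following form: if $\{u,v\}$ is weakly globally linked in a graph $H$ in $\R^d$, then for \emph{every} edge set $F'$ on $V(H)$ disjoint from $E(H)\cup\{uv\}$, the graph $H+F'$ is globally rigid in $\R^d$ if and only if $H+uv+F'$ is globally rigid in $\R^d$. In words: a weakly globally linked pair may be turned into an edge without affecting the global rigidity of any augmentation. The crucial feature is that this holds uniformly over all $F'$, so that the weak global linkedness need only be verified in $H$ itself.

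For part (a), I would write $\tlc_d(G)=G+E'$ with $E'=\{e_1,\dots,e_k\}$ as in the previous lemma, where $e_i$ is weakly globally linked in $G_{i-1}:=G+\{e_1,\dots,e_{i-1}\}$ (set $G_0=G$ and $G_k=\tlc_d(G)$). I would then prove by induction on $i$ that $G_{i-1}+F$ is globally rigid in $\R^d$ if and only if $G_i+F$ is globally rigid in $\R^d$; chaining these equivalences from $i=1$ to $i=k$ yields exactly statement (a). The inductive step is a single application of the substitution principle with $H=G_{i-1}$ and $uv=e_i$: since $e_i$ is weakly globally linked in $G_{i-1}$, we get that $G_{i-1}+F$ is globally rigid iff $G_{i-1}+e_i+F=G_i+F$ is globally rigid. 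Any edges of $F$ already present in $G_{i-1}$ (for instance an $e_j$ with $j<i$ that also lies in $F$) may be discarded from $F$ without changing any graph, so the disjointness hypothesis of the principle is harmlessly met.

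Part (b) then follows quickly. Applying (a) with $F=\emptyset$ shows that $G$ is globally rigid if and only if $\tlc_d(G)$ is globally rigid, so it suffices to prove that a totally loose graph $H$ is globally rigid if and only if it is complete. If $H$ is complete, then every framework equivalent to a realization of $H$ preserves all pairwise distances and is therefore congruent, so $H$ is globally rigid. Conversely, if $H$ is globally rigid then, by the characterization recorded just before the definition of ``totally loose'' (global rigidity is equivalent to all vertex pairs being globally linked), every pair of $H$ is globally linked and hence weakly globally linked; but total looseness forces every non-adjacent pair to be globally loose. Therefore $H$ has no non-adjacent pair, i.e.\ $H$ is complete.

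The main obstacle I anticipate is not this bookkeeping but securing the precise ``for all $F'$'' form of the substitution principle. It is conceivable that \cite[Lemma 3.2]{wgl} is phrased only in the bare case $F'=\emptyset$, namely ``$H$ is globally rigid iff $H+uv$ is''. In that event the inductive step above breaks, since $e_i$ is weakly globally linked in $G_{i-1}$ rather than in $G_{i-1}+F$, and I would first need to establish that weak global linkedness is \emph{monotone} under edge additions, so that $e_i$ remains weakly globally linked in $G_{i-1}+F$; intuitively a pair that is globally linked in some generic realization stays globally linked when further distance constraints are imposed. With that monotonicity in hand one applies the bare substitution to the graph $G_{i-1}+F$ and recovers the step. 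Either way, once a version of the substitution principle valid in the presence of the extra edges $F$ is available, both parts are immediate.
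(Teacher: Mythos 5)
The paper does not actually prove this lemma: it states it as a direct consequence of \cite[Lemma 3.2]{wgl} and moves on, so there is no in-paper argument to compare yours against. Your reconstruction is correct and is essentially the intended derivation. The cited result is indeed the bare single-edge substitution (``if $\{u,v\}$ is weakly globally linked in $H$, then $H$ is globally rigid iff $H+uv$ is''), so you do need the second route you describe, and the monotonicity step you flag as the potential obstacle is genuinely needed but easy: if $\{u,v\}$ is globally linked in a generic $(H,p)$, then $(H+F',p)$ is still generic (genericity depends only on $p$), and every framework equivalent to $(H+F',p)$ is in particular equivalent to $(H,p)$, so the $uv$-distance is still forced; hence weak global linkedness survives edge additions. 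With that, applying the bare substitution inside $G_{i-1}+F$ and chaining over $i$ gives (a), and your handling of possible overlaps between $F$ and the $e_j$'s is the right bookkeeping. Part (b) is also fine; note that the forward direction can alternatively be seen even more directly (if $G$ is globally rigid, every pair is globally linked, so the very first round of the closure construction already adds all missing edges), but your route through (a) with $F=\emptyset$ plus ``totally loose and globally rigid implies complete'' is equally valid.
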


As a corollary of Lemma \ref{J}(a), when we search for a minimum (cost or cardinality) set $F$ of new edges whose addition makes $G$ globally rigid
then we may replace $G$ with $\tlc_d (G)$. Hence, provided that we can efficiently compute the totally loose closure of a graph (which is the case in $\R^2$), it suffices to consider totally loose input graphs in the cheapest globally rigid supergraph problem. 
This is why in this paper totally loose graphs play a key role  and
  will be thoroughly analysed. 
 It will turn out that every rigid graph in $\R^2$ has a tree-like structure, which can be described with the help of its totally loose closure. 
By using an appropriate tree representation of this tree-like structure, we shall be able to reduce our augmentation problems to specific augmentation
problems on trees, and obtain the desired constant factor approximation algorithm.
We remark that the corresponding tree representation of a rigid graph in $\R^1$ is the so-called block-cut vertex tree.

\subsection{Basic properties of weakly globally linked pairs}
\label{sec:wgl2}

We %
 recall some basic lemmas from \cite{wgl}.

\begin{lemma}
\label{notlinked}\cite{wgl}
Let $G=(V,E)$ be a graph and let $\{u,v\}$ be a non-adjacent vertex pair.
If $\{u,v\}$ is not linked in $G$ in $\R^d$ then $\{u,v\}$ is globally loose in $G$ in $\R^d$.
\end{lemma}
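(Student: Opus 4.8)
The plan is to prove the statement in the following direct form: assuming $\{u,v\}$ is not linked in $G$ in $\R^d$, I will show that in \emph{every} generic realization $(G,p)$ the pair $\{u,v\}$ fails to be globally linked; by the definition of globally loose this is exactly what is required. So I fix an arbitrary generic realization $(G,p)$ and aim to produce, arbitrarily close to $p$, an equivalent framework $(G,q)$ with $\|q(u)-q(v)\|\neq\|p(u)-p(v)\|$.

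First I would translate non-linkedness into a statement about the rigidity matrix. Since $\{u,v\}$ is not linked, $r_d(G+uv)=r_d(G)+1$. As the rank of $R(G,p)$ at a generic $p$ equals the matroid rank, we have $\operatorname{rank} R(G,p)=r_d(G)$ while $\operatorname{rank} R(G+uv,p)=r_d(G)+1$. Hence the single extra row $r_{uv}$ associated with the pair $\{u,v\}$ (whose nonzero entries are the coordinates of $p(u)-p(v)$ and $p(v)-p(u)$) is linearly independent from the rows of $R(G,p)$; equivalently $r_{uv}\notin\operatorname{rowspace} R(G,p)$.

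Next I would study the fiber of the squared edge-length map $f\colon\R^{d|V|}\to\R^{|E|}$, $f(q)=\bigl(\|q(i)-q(j)\|^2\bigr)_{ij\in E}$, whose Jacobian at $q$ is $2R(G,q)$. Because $p$ attains the maximal possible rank $r_d(G)$ and maximal rank is an open condition, $R(G,\cdot)$ has constant rank $r_d(G)$ on a neighborhood $U$ of $p$; by the constant rank theorem, $\mathcal{F}:=f^{-1}(f(p))\cap U$ is then a smooth submanifold near $p$ with tangent space $T_p\mathcal{F}=\ker R(G,p)$, and each $q\in\mathcal{F}$ gives a framework $(G,q)$ equivalent to $(G,p)$. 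I would then differentiate $g(q)=\|q(u)-q(v)\|^2$ along $\mathcal{F}$: its gradient at $p$ is $2r_{uv}$, and since $T_p\mathcal{F}=\ker R(G,p)$ is the orthogonal complement of $\operatorname{rowspace} R(G,p)$ while $r_{uv}$ lies outside this row space, the orthogonal projection of $\nabla g(p)$ onto $T_p\mathcal{F}$ is nonzero. Thus $g|_{\mathcal{F}}$ has nonzero derivative at $p$, hence is non-constant near $p$, yielding equivalent frameworks $(G,q)$ with $\|q(u)-q(v)\|\neq\|p(u)-p(v)\|$. Therefore $\{u,v\}$ is not globally linked in $(G,p)$, and since $(G,p)$ was an arbitrary generic realization, $\{u,v\}$ is globally loose in $G$ in $\R^d$.

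I expect the main obstacle to be the smoothness step: one must guarantee that $\mathcal{F}$ is a genuine manifold at $p$, so that the first-order (infinitesimal) flex changing the $u$-$v$ distance can be integrated into an honest nearby equivalent framework rather than being obstructed at higher order. Genericity is precisely what rescues this, as it forces $R(G,\cdot)$ to have locally constant rank and makes the constant rank theorem applicable; note that the dimension $d$ enters the argument only through the identity between the generic rank of $R(G,p)$ and the matroid rank $r_d(G)$, so the proof is uniform in $d$.
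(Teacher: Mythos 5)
Your proposal is correct: the reduction of non-linkedness to $r_{uv}\notin\operatorname{rowspace}R(G,p)$ at a generic point, followed by the constant-rank-theorem argument that the fiber of the edge-length map is a smooth manifold with tangent space $\ker R(G,p)$ along which the $u$--$v$ distance has nonzero derivative, is exactly the standard proof of this fact. The paper itself gives no proof and simply imports the lemma from \cite{wgl}, where the argument is essentially the one you describe, so there is nothing to correct.
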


Let $H=(V,E)$ be a graph and $x,y\in V$. We use $\kappa_H(x,y)$ to
denote the maximum number of pairwise internally disjoint $xy$-paths in
$H$. Note that if $xy\notin E$ then, by Menger's theorem,
$\kappa_H(x,y)$ is equal to the size of a smallest set $S\subseteq
V-\{x,y\}$ for which there is no $xy$-path in $H-S$.

\begin{lemma}
\label{kappa}\cite{wgl}
Let $G=(V,E)$ be a graph and let $\{u,v\}$ be a non-adjacent vertex pair
with $\kappa_G(u,v)\leq d$. Then $\{u,v\}$ is globally loose in $G$ in $\R^d$.
\end{lemma}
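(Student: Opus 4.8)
The plan is to exploit a small vertex separator directly by geometric means: for an arbitrary generic realization $(G,p)$ I will construct an equivalent realization in which the distance between $u$ and $v$ differs, which shows that $\{u,v\}$ is globally linked in no generic framework, i.e.\ globally loose. First I would invoke Menger's theorem: since $uv\notin E$ and $\kappa_G(u,v)\le d$, there is a vertex set $S\subseteq V-\{u,v\}$ with $|S|\le d$ such that $G-S$ has no $uv$-path. Let $B$ be the vertex set of the connected component of $v$ in $G-S$ and put $A=V-S-B$; then $u\in A$, and since $B$ is a full component of $G-S$ there are no edges of $G$ between $A$ and $B$.

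Next I would modify $p$ only on $B$ by an isometry of $\R^d$ that fixes $p(S)$ pointwise. Such isometries are exactly those fixing the affine hull $\operatorname{aff}(p(S))$ pointwise, and since $\dim\operatorname{aff}(p(S))\le |S|-1\le d-1<d$, this group is nontrivial. Choosing such an isometry $\gamma$, define $q$ by $q(x)=p(x)$ for $x\in A\cup S$ and $q(x)=\gamma(p(x))$ for $x\in B$. To see that $(G,q)$ is equivalent to $(G,p)$: edges inside $A\cup S$ are unchanged; edges inside $B$ keep their length because $\gamma$ is an isometry; edges between $S$ and $B$ keep their length because $\gamma$ fixes $p(S)$; and there are no edges between $A$ and $B$. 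Hence all edge lengths agree, while $\|q(u)-q(v)\|=\|p(u)-\gamma(p(v))\|$.

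The crux, and the step I expect to be the main obstacle, is to choose $\gamma$ so that this last quantity differs from $\|p(u)-p(v)\|$; this is where genericity enters. Decomposing every point into its orthogonal projection onto $\operatorname{aff}(p(S))$ and the complementary component (superscript $\perp$), and writing $\Gamma$ for the orthogonal action of $\gamma$ on the complement, a short computation gives $\|p(u)-\gamma(p(v))\|^2-\|p(u)-p(v)\|^2 = 2\langle p(u)^\perp,\, p(v)^\perp-\Gamma(p(v)^\perp)\rangle$. For a generic $p$ neither $p(u)$ nor $p(v)$ lies on the proper affine subspace $\operatorname{aff}(p(S))$, since membership is the vanishing of a polynomial in the coordinates that is not identically zero and hence cannot vanish under algebraic independence; thus $p(u)^\perp\ne 0$ and $p(v)^\perp\ne 0$. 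If the orthogonal complement is one-dimensional ($|S|=d$ with $p(S)$ affinely independent), take $\gamma$ to be the reflection, so $\Gamma=-\mathrm{id}$ and the expression becomes $4\langle p(u)^\perp,p(v)^\perp\rangle\ne 0$, both factors being nonzero in a line. Otherwise the complement has dimension at least two, and as $\Gamma$ ranges over the orthogonal group the vector $\Gamma(p(v)^\perp)$ ranges over a sphere of dimension at least one, on which the nonzero linear functional $\langle p(u)^\perp,\cdot\rangle$ is non-constant; so some $\Gamma$ yields a nonzero value.

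Since this construction works for every generic $(G,p)$, the pair $\{u,v\}$ is globally linked in no generic realization, which is exactly the assertion that $\{u,v\}$ is globally loose in $G$ in $\R^d$. I expect the only delicate points to be the genericity argument that $p(u),p(v)\notin\operatorname{aff}(p(S))$ and the separate treatment of the one-dimensional complement, both of which should be routine; the conceptual content lies entirely in the separator-plus-isometry construction, and I would not expect to need any of the rigidity-matroid machinery (note in particular that $\kappa_G(u,v)\le d$ does not imply that $\{u,v\}$ is unlinked, so Lemma~\ref{notlinked} cannot be invoked directly).
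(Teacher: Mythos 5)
Your proof is correct. Note that the paper itself gives no proof of this lemma --- it is quoted from \cite{wgl} --- and the argument there is exactly the one you reconstruct: take a separator $S$ of size at most $d$ given by Menger's theorem, and move the component of $v$ in $G-S$ by a nontrivial isometry fixing $p(S)$ pointwise (which exists since $\dim\operatorname{aff}(p(S))\le d-1$), using genericity to ensure that $p(u)$ and $p(v)$ avoid $\operatorname{aff}(p(S))$ and that a suitable choice of the isometry changes the $uv$-distance. Your handling of the edge cases (the one-dimensional orthogonal complement via a reflection, and the higher-dimensional case via non-constancy of a linear functional on a sphere) is sound, so no gaps remain.
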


Let $G_i=(V_i,E_i)$ be a graph, $t \geq 1$ an integer, and suppose that $K_i$ is a complete subgraph 
(also called a {\it clique}) of $G_i$
on $t$ vertices, for $i=1,2$. Then the {\it $t$-clique sum} operation on $G_1,G_2$,
along $K_1,K_2$, creates a new graph $G$ by identifying the vertices of $K_1$ with the vertices
of $K_2$, following some bijection between their vertex sets. 
The {\it clique sum} operation is a $t$-clique sum operation for some $t\geq 1$.

\begin{lemma}
\label{cliquesum}\cite{wgl}
Suppose that $G$ is the clique sum of $G_1$ and $G_2$ and let $u,v\in V(G_1)$.
Then $\{u,v\}$ is weakly globally linked in $G$ in $\R^d$ if and only if
$\{u,v\}$ is weakly globally linked in $G_1$ in $\R^d$.
\end{lemma}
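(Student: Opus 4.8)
The plan is to prove the two implications separately. Both rest on the observation that, because $K_1$ and $K_2$ are cliques, the pairwise distances among the identified clique vertices are preserved in every equivalent realization; this lets us cut and paste realizations of $G$ along the clique. Write $W=V(K_1)=V(K_2)$ for the shared vertex set after identification, so that $V(G)=V(G_1)\cup V(G_2)$ and $V(G_1)\cap V(G_2)=W$, and for a map $p$ on $V(G)$ let $p_1$ denote its restriction to $V(G_1)$. The implication ``weakly globally linked in $G_1$ $\Rightarrow$ weakly globally linked in $G$'' is the easy direction, and the converse is the heart of the argument.

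For the easy direction I would start from a generic realization $(G_1,p_1)$ in which $\{u,v\}$ is globally linked and extend it to a realization $(G,p)$ of $G$ by placing the vertices of $V(G_2)\setminus W$ at coordinates that are algebraically independent over the field generated by the coordinates of $p_1$; then $(G,p)$ is generic. Given any equivalent framework $(G,q)$, its restriction $(G_1,q_1)$ is equivalent to $(G_1,p_1)$ because $E(G_1)\subseteq E(G)$, so global linkedness of $\{u,v\}$ in $(G_1,p_1)$ yields $\|p(u)-p(v)\|=\|q(u)-q(v)\|$. Hence $\{u,v\}$ is globally linked in the generic framework $(G,p)$, i.e. weakly globally linked in $G$.

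For the converse, fix a generic realization $(G,p)$ in which $\{u,v\}$ is globally linked; I claim $\{u,v\}$ is globally linked in the (generic) restriction $(G_1,p_1)$. Take any framework $(G_1,q_1)$ equivalent to $(G_1,p_1)$. Since $W$ induces a clique of $G_1$, the configurations $p|_W$ and $q_1|_W$ have identical pairwise distances, hence are congruent, so there is an isometry $\gamma$ of $\R^d$ with $\gamma(p(x))=q_1(x)$ for every $x\in W$. Define a realization $q$ of $G$ by $q(x)=q_1(x)$ for $x\in V(G_1)$ and $q(x)=\gamma(p(x))$ for $x\in V(G_2)\setminus W$; this is consistent since $q_1(x)=\gamma(p(x))$ on $W$. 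Then $(G,q)$ is equivalent to $(G,p)$: edges of $G_1$ are handled by the equivalence of $q_1$ and $p_1$, while for an edge $xy\in E(G_2)$ both endpoints satisfy $q(\cdot)=\gamma(p(\cdot))$, so $\|q(x)-q(y)\|=\|p(x)-p(y)\|$ because $\gamma$ is an isometry. Global linkedness of $\{u,v\}$ in $(G,p)$ now gives $\|p_1(u)-p_1(v)\|=\|p(u)-p(v)\|=\|q(u)-q(v)\|=\|q_1(u)-q_1(v)\|$, as $u,v\in V(G_1)$. This proves $\{u,v\}$ is globally linked in $(G_1,p_1)$, hence weakly globally linked in $G_1$.

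The main obstacle is the gluing step in the converse: one must produce the ambient isometry $\gamma$ of $\R^d$ realizing the congruence of the clique configurations and then verify that the spliced map $q$ is genuinely equivalent to $(G,p)$ on both sides simultaneously. This uses in an essential way that $W$ induces a complete graph, so that all of its pairwise distances are pinned down in every equivalent framework, together with the standard fact that any two congruent finite point configurations in $\R^d$ differ by an isometry of $\R^d$. The genericity bookkeeping in the forward direction, namely choosing the extension of $p_1$ to be algebraically independent, is routine by comparison.
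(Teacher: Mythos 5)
The paper does not prove this lemma itself --- it is imported verbatim from the cited companion paper \cite{wgl} --- so there is no in-text argument to compare against; your proof is correct and is exactly the standard cut-and-paste argument one expects here (and that the cited source uses). Both directions are sound: the forward one by extending a good generic realization of $G_1$ generically over the field generated by its coordinates, and the converse by splicing an arbitrary framework equivalent to $(G_1,p_1)$ with an isometric copy of the $G_2$-part, which is legitimate precisely because the clique on $W$ forces the two configurations on $W$ to be congruent and hence related by an ambient isometry of $\R^d$.
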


An edge $e$ of a globally rigid graph $H$ is {\it critical} if $H-e$ is not
globally rigid.

\begin{lemma} \cite{wgl}
\label{critical}
Let $G=(V,E)$ be a graph and $\{u,v\}$ be a non-adjacent vertex pair in $G$. Suppose that
$G$ has a globally rigid supergraph in $\R^d$ in which $uv$ is a critical edge.
Then $\{u,v\}$ is globally loose in $G$ in $\R^d$.
\end{lemma}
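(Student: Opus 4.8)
The plan is to reduce global looseness of $\{u,v\}$ in $G$ to a genericity argument carried out inside the given supergraph, exploiting the key point that whether a framework $(\cdot,p)$ is generic depends only on the placement $p$ and not on the edge set. Write $H=G+F$ for the globally rigid supergraph in $\R^d$ in which $uv$ is critical; since $\{u,v\}$ is non-adjacent in $G$ we have $uv\in F$, so $H-uv$ is a graph on the same vertex set $V$ that contains $G$ as a spanning subgraph. To prove that $\{u,v\}$ is globally loose in $G$ I must show that in \emph{every} generic realization of $G$ the pair $\{u,v\}$ fails to be globally linked.

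So I would fix an arbitrary generic realization $(G,p)$. Because genericity is a property of $p$ alone, the very same map $p$ yields generic realizations $(H,p)$ and $(H-uv,p)$ on the common vertex set $V$. Now I would invoke that global rigidity of a generic framework is determined by the underlying graph: $(H,p)$ is globally rigid since $H$ is, whereas $(H-uv,p)$ is \emph{not} globally rigid since $H-uv$ is not (this is exactly the criticality of $uv$). The latter hands me a framework $(H-uv,q)$ that is equivalent to $(H-uv,p)$ but not congruent to it.

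The hard part will be to show that this $q$ actually changes the $u$--$v$ distance, i.e.\ that $\|q(u)-q(v)\|\neq\|p(u)-p(v)\|$; this is where the critical-edge hypothesis is used. I would argue by contradiction. If the two distances were equal, then $(H,q)$ and $(H,p)$ would agree on the length of every edge of $H$ --- on the edges of $H-uv$ by the assumed equivalence, and on $uv$ by the supposed equality --- so $(H,q)$ would be equivalent to the globally rigid framework $(H,p)$ and therefore congruent to it. But congruence asserts the equality of \emph{all} pairwise distances and is insensitive to which pairs happen to be edges, so congruence of $(H,q)$ and $(H,p)$ would force $(H-uv,q)$ to be congruent to $(H-uv,p)$, contradicting the choice of $q$. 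Keeping the distinction between equivalence (edge lengths only) and congruence (all pairwise distances) straight is the one place where care is genuinely needed.

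Finally I would restrict $q$ to $V(G)$: since $G$ is a spanning subgraph of $H-uv$, the framework $(G,q)$ is equivalent to $(G,p)$ while $\|q(u)-q(v)\|\neq\|p(u)-p(v)\|$, so $\{u,v\}$ is not globally linked in $(G,p)$. As $(G,p)$ was an arbitrary generic realization, $\{u,v\}$ is globally loose in $G$ in $\R^d$, as claimed. I expect this argument to be self-contained from the genericity facts recalled in Section~\ref{sec:pre} together with the definitions of the present section, without appealing to the other lemmas recalled in this subsection.
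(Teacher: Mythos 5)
The paper does not reprove this lemma in situ --- it is imported from \cite{wgl} --- but your argument is correct and is essentially the proof given there: pass to the supergraph $H$ with $uv$ critical, use that $(H-uv,p)$ is generic and not globally rigid to produce an equivalent non-congruent $q$, and note that $q$ must change the $u$--$v$ distance, since otherwise $(H,q)$ would be equivalent, hence congruent, to the globally rigid $(H,p)$, forcing $(H-uv,q)$ to be congruent to $(H-uv,p)$. Your handling of the equivalence-versus-congruence distinction and of genericity as a property of $p$ alone is exactly the point of the argument, so nothing is missing.
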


A basic graph operation is the {\it contraction} of a vertex set $X\subseteq V$ in a graph $G = (V, E)$. This
operation removes the vertices of $X$ from $G$, adds a new vertex $x$, and for each neighbour $w$ of $X$ in $G$
adds a new edge $xw$. The resulting graph is denoted by
$G/X$. The contraction
of an edge $e = xy$ means the contraction of the set $\{x, y\}$. It may also be denoted by $G/e$.

We will frequently use the following lemma. See Figure \ref{fig:gen9} for an illustration.
\begin{lemma}
\label{coro}\cite{wgl}
Let $G=(V,E)$ be a graph, $u,v\in V$, $V_0\subseteq V$. Suppose that $G[V_0]$ is a $(u,v)$-rigid subgraph of $G$ in $\R^d$, and suppose that
there is a $uv$-path in $G$ that is internally disjoint from $V_0$. Then
$\{u,v\}$ is weakly globally linked in $G$ in $\R^d$.
\end{lemma}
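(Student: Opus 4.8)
The plan is to reduce to the bare configuration and then exhibit one convenient generic framework. First I would record a monotonicity principle: if $\{u,v\}$ is weakly globally linked in a subgraph $H$ of $G$ (on a subset of the vertices), then it is weakly globally linked in $G$. Indeed, given a generic $(H,p)$ in which $\{u,v\}$ is globally linked, extend $p$ to a map $\tilde p$ on $V(G)$ whose remaining coordinates are algebraically independent over $\mathbb{Q}(p)$; then $\tilde p$ is generic, and any $(G,\tilde q)$ equivalent to $(G,\tilde p)$ restricts to a framework equivalent to $(H,p)$, forcing $\|\tilde q(u)-\tilde q(v)\|=\|\tilde p(u)-\tilde p(v)\|$. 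Using this, it suffices to treat $G'=G[V_0]\cup P$, where $P$ is the given $uv$-path; and we may assume $uv\notin E$, since adjacent pairs are trivially globally linked, so $P$ has at least one internal vertex, none of which lies in $V_0$.

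The key observation is that the equivalent realizations of $G'$ decouple. In any $(G',q)$ equivalent to a generic $(G',p)$, the restriction $q|_{V_0}$ is congruent to one of finitely many realizations of the rigid graph $G[V_0]$ (finiteness is exactly the rigidity of the body), so $\|q(u)-q(v)\|$ lies in a finite set $D=\{t_0,t_1,\dots,t_m\}$, where $t_0=\|p(u)-p(v)\|$ is the true distance. Conversely, since the internal vertices of $P$ avoid $V_0$, each body branch can be combined independently with any placement of the open chain $P$ whose endpoints are the prescribed $q(u),q(v)$. By the standard reachability description of an open chain in $\R^d$ ($d\ge 2$), the distances attainable between its endpoints fill the closed interval $I=[\max(0,\,2\ell_{\max}-L),\,L]$, where $L$ is the total length of $P$ and $\ell_{\max}$ its longest link. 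Hence the set of $u$--$v$ distances realized by equivalent frameworks of $G'$ is exactly $D\cap I$, and $\{u,v\}$ is globally linked in $(G',p)$ if and only if $D\cap I=\{t_0\}$.

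Now I would engineer a framework with $D\cap I=\{t_0\}$. Fix a generic realization of the body $G[V_0]$; this determines $D$ and a gap $\eta>0$ with $(t_0-\eta,t_0+\eta)\cap D=\{t_0\}$. Then place the internal vertices of $P$ in a tiny cluster near $v$, so that one link has length $\approx t_0$ and all others are negligible; this makes $I$ a small interval around $t_0$ contained in $(t_0-\eta,t_0+\eta)$, whence $D\cap I=\{t_0\}$ and $\{u,v\}$ is globally linked in this (not yet generic) framework. Finally, I would upgrade to a generic framework: the condition \enquote{$I$ meets $D$ only at $t_0$, with strict separation} is open in $p$, because near a generic body configuration the number of branches is locally constant and each branch distance, together with the endpoints of $I$, varies continuously with $p$. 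Since generic realizations are dense, some generic $p^*$ lies in this open set; then $\{u,v\}$ is globally linked in $(G',p^*)$, so it is weakly globally linked in $G'$, and by the monotonicity principle weakly globally linked in $G$.

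The main obstacle is the last step: justifying that the favourable condition is genuinely open and that the branch set $D$ is stable under small perturbations of a generic body, i.e.\ that we stay away from the discriminant locus where real branches can merge or appear. This goes hand in hand with the two decoupling facts used above, namely the precise reachability interval of the open chain and the independence of the body branch from the chain placement. Once these continuity and decoupling facts are in place, the density of generic points delivers the conclusion.
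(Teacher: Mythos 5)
First, note that the paper does not prove this lemma: it is imported verbatim from \cite{wgl}, so there is no in-paper argument to compare against. Judged on its own terms, your strategy (reduce to the body-plus-path graph $G'=G[V_0]\cup P$, use finiteness of the distance set $D$ coming from the generic rigid body, and use the reachability interval $I$ of the open chain to isolate the true distance $t_0$) is the right one, and your monotonicity principle and the decoupling observation are both correct. But the proof as written has a genuine gap, and you have named it yourself: you first build a \emph{non-generic} framework with the path vertices at special positions and then try to ``upgrade to generic'' by a perturbation, which forces you to prove that the branch set $D$ of the body is stable (no real branches appearing or merging) under small perturbations of a generic configuration. That stability statement is exactly the kind of discriminant-avoidance fact that needs a real argument (properness of the measurement map modulo congruences plus regularity of \emph{all} points of the fibre over a generic value), and you do not supply it. As it stands, the last step does not go through.

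The good news is that the perturbation step is unnecessary, and removing it closes the gap. Fix the generic body configuration $p|_{V_0}$ once and for all; this pins down the finite set $D$ and the gap $\eta>0$ with $(t_0-\eta,t_0+\eta)\cap D=\{t_0\}$, and $D$ is never varied again. Now choose the internal vertices of $P$ inside small balls around $p(v)$ (of radius small enough that the total length of all links other than $uw_1$ is below $\eta/2$), but choose them \emph{generically over the field generated by the body coordinates}: since the non-generic configurations form a measure-zero set, every such ball contains admissible points, so the resulting framework $(G',p)$ is generic. For this framework, $t_0\in I\subseteq(t_0-\eta,t_0+\eta)$ automatically (the lower bound $2\ell_{\max}-L$ and upper bound $L$ both lie within $\sum_{i\ge 2}\ell_i$ of $\ell_{\max}$, which is within $\sum_{i\ge 2}\ell_i$ of $t_0$ by the triangle inequality), and every equivalent framework has its $uv$-distance in $D\cap I=\{t_0\}$. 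Note also that you only need the containment of the realized distance set in $D\cap I$, not the exact equality you assert; the ``fill the whole interval'' direction of the reachability description (which fails for $d=1$) is never used, so the argument works uniformly for all $d\ge 1$. With these two repairs the proof is complete and elementary.
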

\begin{figure}[ht]
\centering
\includegraphics[scale=0.8]{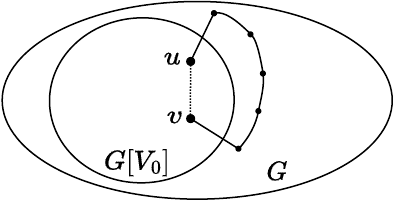}
\caption{Lemma \ref{coro} gives a sufficient condition for the weak global linkedness of $\{u,v\}$.}
\label{fig:gen9}
\end{figure}

\subsection{Weakly globally linked pairs in $\R^2$}

In this subsection we first state the key results from \cite{wgl} that are used in this paper.
Each of these results is concerned with the case $d=2$ and most of them will be used in the rest of this subsection only to
 prove Lemma \ref{wglvsgl} and Theorem
\ref{thm:onestep}.

A pair $(a, b)$ of vertices of a 2-connected graph $H = (V, E)$ is called a {\it 2-separator} if
$H - \{a, b\}$ is disconnected.

\begin{lemma}\cite{wgl}
\label{pair}
Let $G=(V,E)$ be a 2-connected graph and let $(u,v)$ be a 2-separator of $G$.
If the pair $\{u,v\}$ is linked in $G$ in $\R^2$, then $\{u,v\}$ is weakly globally linked in $G$ in $\R^2$.
\end{lemma}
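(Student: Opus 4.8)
The plan is to show that if $(u,v)$ is a $2$-separator of a $2$-connected graph $G$ and $\{u,v\}$ is linked in $\R^2$, then $\{u,v\}$ is weakly globally linked by exhibiting the structural configuration required by Lemma~\ref{coro}. Since $\{u,v\}$ is linked in $G$ in $\R^2$, by the characterization quoted in the preliminaries there exists a $(u,v)$-rigid induced subgraph $G[V_0]$, i.e.\ a rigid subgraph containing both $u$ and $v$. The goal is then to locate a $uv$-path internally disjoint from $V_0$, at which point Lemma~\ref{coro} immediately yields weak global linkedness.

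First I would use the $2$-separator hypothesis: $G-\{u,v\}$ is disconnected, so its vertex set splits into parts; let $B$ be one connected component of $G-\{u,v\}$ and let $A=V\setminus(B\cup\{u,v\})$ be the union of the remaining components, so that there are no edges between $A$ and $B$. Because $G$ is $2$-connected, both $u$ and $v$ have neighbours in $B$ (otherwise one of them would be a cutvertex), and similarly both have neighbours in $A$. This means $G[A\cup\{u,v\}]$ contains a $uv$-path through $A$ and $G[B\cup\{u,v\}]$ contains a $uv$-path through $B$; call these $P_A$ and $P_B$. These two paths are internally disjoint from each other since $A$ and $B$ are.

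The key step is to argue that the rigid subgraph $G[V_0]$ cannot meet the interiors of both $P_A$ and $P_B$ in a way that blocks the construction. The decisive observation is that a rigid graph on at least two vertices is connected, and in fact $2$-connected once it has enough vertices; more usefully, if $G[V_0]$ contained internal vertices of both $P_A$ and $P_B$, then $V_0$ would have vertices in both $A$ and $B$, yet $V_0$ induces a connected (rigid) subgraph. Any path inside $G[V_0]$ connecting an $A$-vertex to a $B$-vertex must pass through $\{u,v\}$, so $G[V_0\setminus\{u,v\}]$ would be disconnected into its $A$-part and $B$-part. I would then show this forces $\{u,v\}$ to already be a $2$-separator of the rigid subgraph in a way that lets us restrict attention to one side: concretely, $V_0$ can be assumed to lie on one side, say $V_0\subseteq A\cup\{u,v\}$, because one of the two ``halves'' of a rigid graph hanging off the clique $\{u,v\}$ remains rigid (the part on the other side is attached only along $u,v$ and can be discarded while keeping a $(u,v)$-rigid subgraph). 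Once $V_0\subseteq A\cup\{u,v\}$, the path $P_B$ through $B$ is internally disjoint from $V_0$, and Lemma~\ref{coro} applies.

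The main obstacle I anticipate is precisely the justification that the rigid subgraph $G[V_0]$ can be taken to lie entirely on one side of the separator. The clean way to see this is via a clique-sum / $2$-sum decomposition along $\{u,v\}$: if $\{u,v\}$ spanned a rigid subgraph straddling both sides, one would want to use the fact that rigidity is preserved when one discards the components attached only through the two-vertex cut, but here $\{u,v\}$ need not be an edge, so some care is needed with the $2$-separator versus $2$-sum distinction. I would handle this by adding the edge $uv$ temporarily (which is legitimate since $\{u,v\}$ is linked, so adding $uv$ does not change the rank and the linked/rigid structure is preserved), thereby turning the $2$-separator into a genuine clique along which $G+uv$ decomposes as a clique sum; then I can invoke Lemma~\ref{cliquesum} together with the rigid-subgraph characterization of linkedness to conclude that $\{u,v\}$ is linked in one of the two sides, recover a one-sided $(u,v)$-rigid subgraph there, and combine it with the disjoint path on the other side through Lemma~\ref{coro}.
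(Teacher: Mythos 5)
Note first that the paper does not actually prove this lemma; it is imported verbatim from \cite{wgl}. So the comparison can only be against the natural proof suggested by the surrounding toolkit, and your architecture is exactly that: take a $(u,v)$-rigid subgraph $G[V_0]$ witnessing linkedness, push it onto one side of the separator, route a $uv$-path through another component of $G-\{u,v\}$ (which exists by $2$-connectivity), and invoke Lemma \ref{coro}. That skeleton is correct and complete once the one step you yourself flag is justified.

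That step --- that $V_0$ may be assumed to lie in $D\cup\{u,v\}$ for a single component $D$ of $G-\{u,v\}$ --- is where your written justification does not go through as stated. Lemma \ref{cliquesum} is a statement about \emph{weak global linkedness} across a clique sum, not about linkedness or rank, so invoking it ``together with the rigid-subgraph characterization of linkedness'' does not yield the one-sided $(u,v)$-rigid subgraph; and the parenthetical ``the part on the other side is attached only along $u,v$ and can be discarded while keeping a $(u,v)$-rigid subgraph'' is precisely the assertion to be proved, not a proof of it. The clean repair is a rank count. Let $D_1,\dots,D_m$ be the components of $G-\{u,v\}$ met by $V_0$, let $H_i=G[(V_0\cap V(D_i))\cup\{u,v\}]$ and $n_i=|V(H_i)|$. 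The edge sets $E(H_1),\dots,E(H_m)$ partition $E(G[V_0])$, so $2|V_0|-3=r_2(G[V_0])\le\sum_i r_2(H_i)$, while $\sum_i n_i=|V_0|+2(m-1)$. If no $H_i$ were rigid we would have $r_2(H_i)\le 2n_i-4$ for each $i$, giving $\sum_i r_2(H_i)\le 2|V_0|-4$, a contradiction; hence some $H_i$ is a $(u,v)$-rigid subgraph contained in one side. (Equivalently: an ${\cal R}_2$-circuit through the edge $uv$ cannot have $\{u,v\}$ as a $2$-separator, by the same count.) With that inserted, your proof is correct: any $uv$-path whose interior lies in a component $D_j$ with $j\ne i$ is internally disjoint from $V(H_i)$, and Lemma \ref{coro} gives weak global linkedness. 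The degenerate case $uv\in E$ is trivial, as adjacent pairs are globally linked.
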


\begin{theorem}
\label{globlaza jellemzes2}\cite{wgl}
Let 
$G=(V,E)$ be a $3$-connected graph and $u,v\in V$ with $uv\notin E$.
Suppose that $G_0=(V_0,E_0)$ is a subgraph of $G$ with $u,v\in V_0$ such that $G_0+uv$ is an ${\cal R}_2$-circuit. Then 
$\{u,v\}$ is weakly globally linked in $G$ in $\R^2$ if and only if 
${\rm Clique}(G,V_0)$ is globally rigid in $\R^2$.
\end{theorem}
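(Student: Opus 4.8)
The plan is to prove both implications by reducing the weak global linkedness of $\{u,v\}$ in $G$ to a statement about the single graph $H:={\rm Clique}(G,V_0)$, and then to convert the global rigidity of $H$ into a purely combinatorial condition via Theorem \ref{thm}. First I would record the consequences of the hypotheses. Since $G_0+uv$ is an ${\cal R}_2$-circuit, it is redundantly rigid, so $G_0=(G_0+uv)-uv$ is rigid on $V_0$; hence $G[V_0]\supseteq G_0$ is $(u,v)$-rigid, $\{u,v\}$ is linked in $H$, and $H+uv$ contains the circuit $G_0+uv$ (in particular $uv$ is not an ${\cal R}_2$-bridge in $H+uv$). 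I would also check that $H$ is $3$-connected: completing each neighbourhood $N_G(C)$ of a component $C$ of $G-V_0$ to a clique only shortcuts paths through $C$, so three internally disjoint paths of $G$ between two vertices of $V_0$ survive in $H$; moreover $|V_0|\ge 4$, since the smallest ${\cal R}_2$-circuit is $K_4$. By Theorem \ref{thm} it then suffices to prove that $\{u,v\}$ is weakly globally linked in $G$ if and only if $H$ is ${\cal R}_2$-connected, equivalently globally rigid.

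The engine of the reduction is Lemma \ref{cliquesum}. Let $\hat G$ be obtained from $G$ by adding, for every component $C$ of $G-V_0$, all edges on $N_G(C)$. Then $\hat G$ is an iterated clique sum of $H$ with the extended components (each component together with a clique on its attachment set), glued along the cliques $N_G(C)$; since $u,v\in V(H)$, repeated application of Lemma \ref{cliquesum} gives that $\{u,v\}$ is weakly globally linked in $\hat G$ if and only if it is weakly globally linked in $H$. The inclusion $G\subseteq\hat G$ yields the easy direction: if $\{u,v\}$ is globally linked in some generic $(G,p)$, then the same $p$ is generic for $\hat G$ and every realization of $\hat G$ equivalent to $(\hat G,p)$ is equivalent to $(G,p)$ on $E(G)$, so $\{u,v\}$ is weakly globally linked in $\hat G$ as well.

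For the \emph{only if} direction I would chain: weak global linkedness in $G$ implies the same in $\hat G$, hence in $H$. It then remains to upgrade this to global rigidity of $H$, a statement about $H$ alone. I would argue by contraposition: if $H$ is not globally rigid then, being $3$-connected, it is not ${\cal R}_2$-connected, and using the circuit $G_0+uv\subseteq H+uv$ I would build a globally rigid supergraph $H^{+}\supseteq H$ containing $uv$ in which $uv$ is \emph{critical} — intuitively, one adds edges merging all but the ${\cal R}_2$-components that the circuit through $uv$ will unite, so that $H^{+}-uv$ just fails to be ${\cal R}_2$-connected while $H^{+}$ succeeds. Lemma \ref{critical} then forces $\{u,v\}$ to be globally loose in $H$, contradicting weak global linkedness. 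For the \emph{if} direction, global rigidity of $H$ gives weak global linkedness of $\{u,v\}$ in $H$, hence in $\hat G$ by the clique-sum reduction; what remains is the descent from $\hat G$ back to $G$, i.e. to show that deleting the fill-in edges of the cliques $N_G(C)$ does not destroy weak global linkedness of $\{u,v\}$. Here I would use the $(u,v)$-rigidity of $G[V_0]$ together with Lemma \ref{coro} to exhibit an explicit generic realization of $G$ witnessing global linkedness when some component of $G-V_0$ supplies a $uv$-path internally disjoint from $V_0$, and treat the remaining configurations through the circuit structure.

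The main obstacle is exactly this interface between $G$ and its clique-completed torso $\hat G$, together with the construction of the critical supergraph $H^{+}$. The delicate point is that adding the fill-in cliques can only help rigidity, so the genuinely hard step is the descent from $\hat G$ to $G$: one must show that the external components transmit to their attachment sets precisely the rigidity encoded by the fill-in cliques that are relevant to the ${\cal R}_2$-connectivity of $H$ — no more and no less — so that weak (as opposed to full) global linkedness of $\{u,v\}$ is neither created nor lost. This is where the distinction between globally linked and weakly globally linked pairs (illustrated in Figure \ref{fig:gen1}) and the tight ${\cal R}_2$-circuit hypothesis on $G_0+uv$ must be exploited: the circuit guarantees that $uv$ is the unique way in which the rigidity of $V_0$ can fail, which is what makes the local condition on $H$ equivalent to the global condition on $G$.
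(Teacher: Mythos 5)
First, a point of reference: this paper does not prove Theorem \ref{globlaza jellemzes2} at all --- it is imported verbatim from \cite{wgl}, so there is no in-paper proof to compare yours against. Judging your proposal on its own terms: the reduction to $H={\rm Clique}(G,V_0)$ via the clique-completed graph $\hat G$ and Lemma \ref{cliquesum} is sound, and your \emph{only if} direction is essentially correct --- in fact it can be simplified. You do not need to engineer a supergraph $H^{+}$ by merging $\mathcal{R}_2$-components: since $G_0+uv$ is a \emph{spanning} $\mathcal{R}_2$-circuit of $H+uv$ and $H$ is $3$-connected, $H+uv$ is $\mathcal{R}_2$-connected and $3$-connected, hence globally rigid by Theorem \ref{thm}; if $H$ is not globally rigid, then $uv$ is critical in $H+uv$, Lemma \ref{critical} makes $\{u,v\}$ globally loose in $H$, Lemma \ref{cliquesum} transfers this to $\hat G$, and monotonicity of weak global linkedness under edge addition transfers it down to $G\subseteq\hat G$.

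The genuine gap is in the \emph{if} direction, and you have located it yourself without closing it. Monotonicity runs the wrong way there: global rigidity of $H$ gives weak global linkedness in $\hat G$, but $\hat G\supseteq G$, and nothing you cite lets you descend. Lemma \ref{coro} handles only the degenerate case in which a \emph{single} component of $G-V_0$ is adjacent to both $u$ and $v$ (equivalently, $uv$ is itself a fill-in edge of $H$); in the general case the global rigidity of ${\rm Clique}(G,V_0)$ arises from the combined effect of several fill-in cliques, none of which contains the pair $\{u,v\}$, and "treat the remaining configurations through the circuit structure" is not an argument. The substance of the theorem is precisely the claim that each external component $C$, in a suitably chosen generic realization, constrains the configuration of $N_G(V(C))$ as strongly as the fill-in clique would; this requires the reflection/algebraic machinery behind Lemma \ref{coro} in \cite{wgl}, not the lemma itself. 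One could imagine repairing the descent by noting that every fill-in edge $xy$ of $H$ is weakly globally linked in $G$ (Lemma \ref{coro} applies to $x,y$ since $G[V_0]$ is rigid and $C$ supplies the path) and then peeling the fill-in edges off one at a time via Lemma \ref{wglvsgl} --- but that route is circular in the present paper, because Lemma \ref{wglvsgl} is proved here via Lemma \ref{appendixlemma}, whose proof invokes Theorem \ref{globlaza jellemzes2} twice. So as written, the hard half of the equivalence remains unproved.
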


\begin{lemma}\label{linkedpairs and 2separ}\cite{wgl}
Let $G=(V,E)$ be a 2-connected graph and let $\{u,v\}$ be a linked pair of vertices in $G$. Suppose that $(a,b)$ is a 2-separator of $G$. Let $C$ be a connected component of $G-\{a,b\}$, and let $V_0=V(C)\cup \{a,b\}$. Suppose that $u,v\in V_0$. Then $\{u,v\}$ is weakly globally linked in $G$ if and only if $\{u,v\}$ is weakly globally linked in $G[V_0]+ab$.
\end{lemma}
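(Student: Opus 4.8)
My plan is first to absorb the separator into a single virtual edge and apply the clique‑sum lemma, and then to treat the two implications separately, the reverse one being the heart of the matter. Put $V_2=(V\setminus V_0)\cup\{a,b\}$ and $G_2=G[V_2]$, so that $\{a,b\}$ are the only common vertices of $G[V_0]$ and $G_2$, and $G$ has no edge between $V_0\setminus\{a,b\}$ and $V_2\setminus\{a,b\}$. Thus $G+ab$ is the $2$-clique sum of $G[V_0]+ab$ and $G_2+ab$ along the edge $ab$, and since $u,v\in V_0$, Lemma~\ref{cliquesum} gives that $\{u,v\}$ is weakly globally linked in $G+ab$ if and only if it is weakly globally linked in $G[V_0]+ab$. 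Hence the lemma is equivalent to the assertion that $\{u,v\}$ is weakly globally linked in $G$ if and only if it is weakly globally linked in $G+ab$; that is, adding the virtual edge of the separator does not affect the weak global linkedness of $\{u,v\}$. If $uv\in E$ both sides hold trivially, and if $ab\in E$ there is nothing to prove, so I assume $uv,ab\notin E$.

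The forward implication follows from monotonicity of weak global linkedness under edge additions. If $\{u,v\}$ is globally linked in a generic framework $(G,p)$, then, as $p$ remains generic for $G+ab$ and every framework equivalent to $(G+ab,p)$ is in particular equivalent to $(G,p)$, the pair $\{u,v\}$ is globally linked in $(G+ab,p)$ as well, so it is weakly globally linked in $G+ab$.

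For the converse I would use the following sufficient condition: if there is a generic realization $p$ of $V$ for which $\{a,b\}$ is globally linked in $(G,p)$ \emph{and} $\{u,v\}$ is globally linked in $(G+ab,p)$, then $\{u,v\}$ is globally linked in $(G,p)$, and we are done. Indeed, for any $(G,q)$ equivalent to $(G,p)$, global linkedness of $\{a,b\}$ forces $\|q(a)-q(b)\|=\|p(a)-p(b)\|$, so $(G+ab,q)$ is equivalent to $(G+ab,p)$, and global linkedness of $\{u,v\}$ in the latter yields $\|q(u)-q(v)\|=\|p(u)-p(v)\|$. It thus remains to produce one generic $p$ certifying both properties at once, and here the hypothesis that $\{u,v\}$ is linked in $G$ enters: there is a $(u,v)$-rigid subgraph $G[X]$, and since $uv\notin E$ it has $|X|\ge 3$, so $G[X]$ is rigid and hence $2$-connected in $\R^2$. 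I split into two cases according to whether $\{a,b\}$ is linked in $G$. If it is, then applying Lemma~\ref{pair} to the $2$-separator $(a,b)$ of the $2$-connected graph $G$ shows that $\{a,b\}$ is weakly globally linked in $G$; combined with the hypothesis that $\{u,v\}$ is weakly globally linked in $G+ab$, I would realize the two certificates compatibly through a clique-sum amalgamation along $a,b$. If $\{a,b\}$ is not linked in $G$, then no rigid subgraph of $G$ contains both $a$ and $b$, so $G[X]$ cannot meet two components of $G-\{a,b\}$ (a $2$-connected subgraph straddling the separator would contain both $a$ and $b$); hence $X\subseteq V_0$ and $\{u,v\}$ is already linked in $G[V_0]$. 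In this case I would show that the virtual edge $ab$ is redundant for $\{u,v\}$, i.e.\ that weak global linkedness in $G[V_0]+ab$ already implies it in $G[V_0]$, and then pass from $G[V_0]$ to $G$ by the dual monotonicity: a generic realization of the induced subgraph $G[V_0]$ witnessing global linkedness of $\{u,v\}$ extends to a generic realization of $G$ witnessing the same, because every framework equivalent to $(G,\cdot)$ restricts to one equivalent to $(G[V_0],\cdot)$.

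I expect two steps to be the real obstacles. The first is the compatible realization in the case where $\{a,b\}$ is linked in $G$: because global linkedness is \emph{not} a generic property, I cannot simply pick independent witnessing realizations for $\{a,b\}$ and for $\{u,v\}$ and hope they coincide; instead I must build $p$ so that its restriction to $V_2$ pins the distance $ab$ while its restriction to $V_0$ (together with $ab$) keeps $\{u,v\}$ globally linked, matching the two $ab$-lengths along the separator. Making the amalgamated configuration genuinely generic while preserving both certificates is the delicate part, and is where a rescaling or placement argument for the shared pair $\{a,b\}$ will be needed. The second obstacle is the redundancy of $ab$ when $X\subseteq V_0$: showing that an edge whose endpoints do not form a linked pair of $G$ cannot create new weakly globally linked pairs inside $V_0$ seems to require the finer, circuit-based description of weakly globally linked pairs in $\R^2$ from \cite{wgl} (in the spirit of Theorem~\ref{globlaza jellemzes2}), applied to the $3$-connected pieces of $G[V_0]$.
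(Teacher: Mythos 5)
First, note that the paper does not prove this lemma at all---it is imported verbatim from \cite{wgl}---so there is no in-paper proof to compare against; what can be judged is whether your plan closes. It does not. Your reduction of the statement to ``$\{u,v\}$ is weakly globally linked in $G$ iff in $G+ab$'' via Lemma~\ref{cliquesum}, and the forward implication by monotonicity of weak global linkedness under edge addition, are both correct. But the converse is the entire content of the lemma, and in both of your cases the argument stops exactly where the work begins. In the case where $\{a,b\}$ is linked, your sufficient condition (a single generic $p$ with $\{a,b\}$ globally linked in $(G,p)$ and $\{u,v\}$ globally linked in $(G+ab,p)$) would indeed suffice, but you have no construction of such a $p$: Lemma~\ref{pair} and the hypothesis each give only \emph{some} witnessing generic realization, each a realization of the \emph{entire} vertex set $V$, so there are no complementary pieces to amalgamate along $\{a,b\}$; and since global linkedness is not a generic property, the two existence statements cannot simply be intersected. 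The ``rescaling or placement argument'' you defer to is precisely the missing proof.

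The second case has the same character. That $X\subseteq V_0$ when $\{a,b\}$ is not linked is a correct observation, and the final extension step (weak global linkedness in an induced subgraph implies it in the whole graph) is fine. But the middle claim---that weak global linkedness in $G[V_0]+ab$ implies it in $G[V_0]$ when $\{a,b\}$ is not linked---is asserted, not proved, and it is not obviously true without work: adding an edge between a non-linked pair raises the rank of the rigidity matroid, can create new linked pairs (e.g.\ $C_4+ab$), and can raise local vertex connectivity, all of which feed into the criteria for weak global linkedness. Deriving it from Theorem~\ref{cleavunit} or Theorem~\ref{globlaza jellemzes2}, as you suggest, would also be circular at the source: in \cite{wgl} the present cleaving lemma is an ingredient in the proof of that characterization, not a consequence of it. As written, the proposal is a plausible roadmap with the two decisive steps explicitly left open, so it cannot be accepted as a proof.
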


Let $G=(V,E)$ be a 2-connected graph, let 
$(a,b)$ be a 2-separator in $G$, and let 
$C$ be a connected component of $G-\{a,b\}$.
We say that the graph $G[V(C)\cup \{a,b\}]+ab$ (when $ab\notin E$) or  $G[V(C)\cup \{a,b\}]$ (when $ab\in E$)
is obtained from $G$ by a
{\it cleaving operation} along $(a,b)$. The graph $\bar G$ obtained from $G$ by adding every edge $ab$, for which $ab\notin E$ and $(a,b)$ is a 2-separator of $G$,
is called the {\it augmented graph} of $G$.

\begin{lemma}
\label{lem:cl}\cite{wgl}
Let $G=(V,E)$ be a 2-connected graph and let $\{u,v\}$ be a non-adjacent vertex pair in $G$
with $\kappa_G(u,v)\geq 3$. Then either $(u,v)$ is a 2-separator in $G$ or there is a unique
maximal 3-connected subgraph $B$ of $\bar G$ with $\{u,v\}\subset V(B)$.
In the latter case
the subgraph $B$ can be obtained from $G$ by a sequence of cleaving operations.
Furthermore, $uv\notin E(B)$, and
if the pair $\{u,v\}$ is linked in $G$ then it is also linked in $B$.
\end{lemma}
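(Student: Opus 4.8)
The plan is to construct $B$ explicitly by iterated cleaving, keeping at each step the piece that contains both $u$ and $v$, and then to identify the resulting graph with the relevant $3$-connected block of the Tutte-type decomposition recorded by $\bar G$. The engine of the whole argument is a single \emph{lifting} property: every $2$-separator $(a,b)$ of a graph obtained from $G$ by a sequence of cleavings is again a $2$-separator of $G$. This holds because a discarded component is attached to the kept side only at its two separator vertices, and the virtual edge inserted by each cleaving keeps those two vertices in a common part; hence whenever $\{a,b\}$ disconnects a cleaved graph, the same pair disconnects $G$. Applied inductively, the lifting property shows in particular that all virtual edges ever created are $2$-separator pairs of $G$.

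First I would record that cleaving preserves the hypotheses. If $(a,b)$ is a $2$-separator of the current graph, then $\kappa(u,v)\ge 3>2$ forces $u$ and $v$ to lie on a common side (or to coincide with a separator vertex, which survives every cleaving), so the cleaved graph $G'$ still contains both. Moreover $G'$ is again $2$-connected and $\kappa_{G'}(u,v)\ge 3$: at most one of three internally disjoint $uv$-paths can pass through the discarded component (such a path uses both separator vertices internally, or one of them when it coincides with $u$ or $v$, and internally disjoint paths share no separator vertex), and that single path can be rerouted through the newly added edge $ab$.

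Now I would run the construction. Starting from $G_0=G$, while the current graph has a $2$-separator I cleave along one and keep the $\{u,v\}$-side; each step removes at least one vertex, so the process terminates at a graph $B$ with no $2$-separator, and since $\kappa_B(u,v)\ge 3$ gives $|V(B)|\ge 4$, $B$ is $3$-connected. The dichotomy now falls out of the lifting property: if $(u,v)$ is not a $2$-separator of $G$, then $\{u,v\}$ is never a $2$-separator of any intermediate graph, so no cleaving is degenerate and the construction genuinely yields a $3$-connected $B$ with $u,v\in V(B)$. Because the real edges of $B$ come from $G$ and its virtual edges are $2$-separator pairs of $G$, we get $B\subseteq\bar G$; and since $(u,v)$ is neither an edge nor a $2$-separator of $G$, the edge $uv$ is never inserted, so $uv\notin E(B)$.

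For uniqueness I would show that two distinct maximal $3$-connected subgraphs of $\bar G$ meet in at most two vertices: if they shared a set $S$ with $|S|\ge 3$, then the fan form of Menger's theorem applied in each of them lets one splice three internally disjoint paths through $S$ between any two vertices, so their union would be $3$-connected, contradicting maximality. Hence, were $u$ and $v$ contained in two distinct maximal $3$-connected subgraphs, their intersection would be exactly $\{u,v\}$, making $\{u,v\}$ a $2$-separator of $\bar G$ and, by the lifting property, of $G$ --- contrary to assumption; so $B$ is unique. Finally, the transfer of linkedness uses the behaviour of $\mathcal R_2$ under $2$-separations: a linked pair $\{u,v\}$ has a $(u,v)$-rigid subgraph $H$, and any cleaving separator meeting $V(H)$ is a $2$-separator of the rigid graph $H$, so the standard splitting of a rigid graph along a $2$-separator shows that the $\{u,v\}$-side of $H$ together with the virtual edge is again rigid; iterating produces a $(u,v)$-rigid subgraph of $B$, whence $\{u,v\}$ is linked in $B$. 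The step I expect to be the main obstacle is precisely this reconciliation between the two alternatives of the dichotomy and the uniqueness --- that is, proving rigorously that the greedy cleaving construction lands in the canonical $3$-connected block and simultaneously detects the case where $(u,v)$ is a $2$-separator of $G$.
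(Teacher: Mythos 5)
The paper does not actually prove this lemma --- it imports it from \cite{wgl} --- so your argument has to stand on its own. Most of it does: the lifting of $2$-separators through cleavings is correct (the virtual edge keeps the two attachment vertices in one component, so discarded parts cannot reconnect a separation), the observation that at most one of three internally disjoint $uv$-paths can traverse a discarded component and can be rerouted through the virtual edge is correct, the conclusions $B\subseteq\bar G$ and $uv\notin E(B)$ follow, and the transfer of linkedness via the standard splitting of a rigid graph along a $2$-separation into two rigid pieces carrying the virtual edge is the right tool.

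The genuine gap is in the uniqueness/maximality step, exactly where you flagged trouble. First, from $V(H_1)\cap V(H_2)=\{u,v\}$ you infer that $\{u,v\}$ is a $2$-separator of $\bar G$. This is a non sequitur: the sets $V(H_1)\setminus\{u,v\}$ and $V(H_2)\setminus\{u,v\}$ are disjoint, but $\bar G-\{u,v\}$ may join them through vertices outside $V(H_1)\cup V(H_2)$. (Your union argument does exclude a direct $\bar G$-edge between the two difference sets, since adding it would give a larger $3$-connected subgraph, but a connection through an outside vertex of degree two yields no $3$-connected union and hence no contradiction with maximality.) Second, you never show that the graph produced by greedy cleaving is itself \emph{maximal}: a priori $\bar G$ could contain an edge $xy$ with $x,y\in V(B)$ and $xy\notin E(B)$, namely a $2$-separator pair of $G$ whose separation takes place entirely outside $V(B)$, and then $B+xy$ would be a strictly larger $3$-connected subgraph of $\bar G$. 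Both defects are cured by one missing structural claim, proved by induction along the cleaving sequence: every component $D$ of $G-V(B)$ satisfies $N_G(D)=\{a,b\}$ for some (real or virtual) edge $ab$ of $B$ --- the virtual edges are essential here, as they prevent the two attachment vertices of an earlier cleaving from being split apart by a later one. From this claim one deduces that every $2$-separator pair of $G$ contained in $V(B)$ is already an edge of $B$ (so $B$ is maximal) and that no $3$-connected subgraph of $\bar G$ containing $u,v$ can reach outside $V(B)$ (so $B$ is unique). Without it, neither the identification of your constructed graph with ``the'' $3$-block nor the uniqueness assertion is established.
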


The subgraph $B$ in Lemma \ref{lem:cl} is called the {\it 3-block} of $\{u,v\}$ in $G$.

Let $G=(V,E)$ be a graph, $\emptyset\not= X\subseteq V$, and let $V_1,V_2,\dots, V_r$ be the
vertex sets of the connected components of $G-X$.
The graph ${\rm Clique}(G,X)$ is obtained from $G$ by deleting the vertex sets $V_i$, $1\leq i\leq r$, and adding
a new edge $xy$ for all pairs $x,y\in N_G(V_i)$, $xy\notin E$, for $1\leq i\leq r$.

The following theorem provides a complete characterization of the non-adjacent weakly globally linked pairs $G$.
By Lemma \ref{notlinked} and Lemma \ref{kappa}
we may assume that $\{u,v\}$ is linked and $\kappa_G(u,v)\geq 3$ (for otherwise $\{u,v\}$ is globally loose).
By Lemma \ref{cliquesum} we may also assume that
$G$ is 2-connected.

\begin{theorem}\cite{wgl}\label{cleavunit} %
Let $G=(V,E)$ be a 2-connected graph and let $\{u,v\}$ be a non-adjacent linked pair of vertices
with $\kappa_G(u,v)\geq 3$.
 Then $\{u,v\}$ is weakly globally linked in $G$ if and only if either\\
(i) $(u,v)$ is a 2-separator in $G$, or\\ 
(ii) ${\rm Clique}(B,V_0)$ is globally rigid,\\
where $B$ is the 3-block of $\{u,v\}$ in $G$, and $B_0=(V_0,E_0)$ is a subgraph
of $B$ with $u,v\in V_0$ such that $B_0+uv$ is an ${\cal R}_2$-circuit.
\end{theorem}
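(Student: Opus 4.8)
The plan is to split into two cases according to the first alternative, namely whether or not $(u,v)$ is a $2$-separator of $G$, and to show that in the remaining case weak global linkedness is equivalent to the second alternative. Note that condition (ii) only refers to the $3$-block $B$, which by Lemma \ref{lem:cl} is defined precisely when $(u,v)$ is not a $2$-separator; so the two alternatives are never simultaneously in force, and it suffices to prove (a) if $(u,v)$ is a $2$-separator then $\{u,v\}$ is weakly globally linked, and (b) if $(u,v)$ is not a $2$-separator then $\{u,v\}$ is weakly globally linked if and only if ${\rm Clique}(B,V_0)$ is globally rigid.

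Statement (a) is immediate: since $G$ is $2$-connected, $(u,v)$ is a $2$-separator, and $\{u,v\}$ is linked by hypothesis, Lemma \ref{pair} applies directly and yields that $\{u,v\}$ is weakly globally linked.

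For (b), I would first reduce the question from $G$ to its $3$-block $B$. By Lemma \ref{lem:cl}, since $\kappa_G(u,v)\geq 3$ and $(u,v)$ is not a $2$-separator, $B$ is the unique maximal $3$-connected subgraph of $\bar G$ containing $u$ and $v$; moreover $uv\notin E(B)$, the pair $\{u,v\}$ remains linked in $B$, and $B$ is obtained from $G$ by a sequence of cleaving operations. Each single cleaving operation along a $2$-separator $(a,b)$, keeping $u$ and $v$ in the retained component $C$, replaces the current graph $G'$ with $G'[V(C)\cup\{a,b\}]+ab$, which is exactly the graph appearing in Lemma \ref{linkedpairs and 2separ}. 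Hence, provided the hypotheses of that lemma are met at every step, one application per cleaving shows that $\{u,v\}$ is weakly globally linked in $G$ if and only if it is weakly globally linked in $B$. Finally, since $\{u,v\}$ is linked and non-adjacent in $B$, adding $uv$ creates an ${\cal R}_2$-circuit through $uv$; deleting $uv$ from it gives a subgraph $B_0=(V_0,E_0)$ of $B$ with $u,v\in V_0$ and $B_0+uv$ an ${\cal R}_2$-circuit. As $B$ is $3$-connected, Theorem \ref{globlaza jellemzes2} applies and gives that $\{u,v\}$ is weakly globally linked in $B$ if and only if ${\rm Clique}(B,V_0)$ is globally rigid. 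Chaining the two equivalences yields (b), and together with (a) this establishes the theorem.

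The main obstacle is the reduction step, i.e.\ the iteration of Lemma \ref{linkedpairs and 2separ} along the cleaving sequence of Lemma \ref{lem:cl}. To invoke Lemma \ref{linkedpairs and 2separ} at each stage I must verify that its hypotheses persist: the intermediate graph must stay $2$-connected (which holds because cleaving a $2$-connected graph along $(a,b)$ and re-inserting the edge $ab$ leaves a $2$-connected graph), and, more delicately, $\{u,v\}$ must remain linked after each cleaving so that the next step is legitimate. I expect to handle the latter by showing that cleaving preserves the existence of a $(u,v)$-rigid subgraph (equivalently, linkedness), relying on the fact that the two sides of the $2$-separator communicate only through $\{a,b\}$ together with the added edge $ab$; this is implicit in the machinery behind Lemma \ref{lem:cl}. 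A secondary point to confirm is that condition (ii) does not depend on the chosen circuit $B_0$: this is automatic, since by Theorem \ref{globlaza jellemzes2} the global rigidity of ${\rm Clique}(B,V_0)$ is equivalent to the intrinsic, choice-free property of weak global linkedness of $\{u,v\}$ in $B$.
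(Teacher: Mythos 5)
Your proposal is correct and follows precisely the intended route: the paper itself imports Theorem~\ref{cleavunit} from \cite{wgl} without proof, and your derivation is exactly the natural assembly of the quoted ingredients --- Lemma~\ref{pair} for the $2$-separator case, Lemma~\ref{lem:cl} plus iterated applications of Lemma~\ref{linkedpairs and 2separ} to reduce to the $3$-block $B$, and Theorem~\ref{globlaza jellemzes2} applied to $B$. The one point you flag as delicate, the persistence of linkedness along the cleaving sequence, is indeed the only detail needing care, and your sketch (the retained side together with the edge $ab$ inherits a $(u,v)$-rigid subgraph) is the standard way it is handled in \cite{wgl}.
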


\noindent We next verify some further
basic properties of globally linked and globally loose pairs in $\R^2$.

\begin{lemma}\label{appendixlemma}
If $\{u,v\}$ is a globally loose linked pair in a  3-connected graph $B$, then $B$ has a globally rigid supergraph in which $uv$ is a critical edge
\end{lemma}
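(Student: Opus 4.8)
The plan is to produce the supergraph indirectly: I would look for a supergraph $M$ of $B$ with $uv\notin E(M)$ such that $\{u,v\}$ is \emph{still} globally loose in $M$ while $M+uv$ is globally rigid. Then $uv$ is a critical edge of the globally rigid supergraph $H:=M+uv$, because $H-uv=M$ cannot be globally rigid (a globally loose pair is certainly not globally linked, whereas in a globally rigid graph every pair is globally linked). Since $\{u,v\}$ is linked and $uv\notin E(B)$ (adjacent pairs are globally linked, hence not globally loose), the edge $uv$ lies in some $\mathcal{R}_2$-circuit of $B+uv$; I write this circuit as $B_0+uv$ with $B_0\subseteq B$ and set $V_0:=V(B_0)$, so $u,v\in V_0$ and $B_0+uv$ spans $V_0$. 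Let $D:={\rm Clique}(B,V_0)$. Because $\{u,v\}$ is globally loose in the $3$-connected graph $B$, Theorem \ref{globlaza jellemzes2} (applied with $B_0$) tells us that $D$ is \emph{not} globally rigid.

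I would build $M$ by completing the ``outside'' of $V_0$: for each connected component $V_i$ of $B-V_0$ set $S_i:=N_B(V_i)$ and replace the piece $B[V_i\cup S_i]$ by the complete graph $K_{V_i\cup S_i}$, also inserting the clique on each $S_i$ inside $V_0$ so that $M[V_0]=D$. Then $M\supseteq B$, the components of $M-V_0$ are again the $V_i$ with $N_M(V_i)=S_i$, hence ${\rm Clique}(M,V_0)=D$ is unchanged and still not globally rigid; applying Theorem \ref{globlaza jellemzes2} now to $M$ (which is $3$-connected and in which $B_0+uv$ is still a circuit) keeps $\{u,v\}$ globally loose in $M$. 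To guarantee $uv\notin E(M)$ I must rule out $u,v\in S_i$ for a common $i$: if both lay in some $S_i=N_B(V_i)$, the connected component $V_i$ would supply a $uv$-path internally disjoint from the $(u,v)$-rigid set $V_0$, and Lemma \ref{coro} would make $\{u,v\}$ weakly globally linked, a contradiction. In particular each $S_i$ is a separator of the $3$-connected $B$ (or equals $V_0$), so $|S_i|\ge 3$ and $D\ne K_{V_0}$.

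It then remains to show $M+uv$ is globally rigid, for which by Theorem \ref{thm} it suffices to verify $3$-connectivity and $\mathcal{R}_2$-connectivity. The former is immediate, as $M+uv\supseteq B$ is $3$-connected. For the latter the crux is that $B_0+uv$ is a \emph{spanning} $\mathcal{R}_2$-circuit of $D+uv$: being an $\mathcal{R}_2$-circuit it is rigid with $r_2(B_0+uv)=2|V_0|-3=r_2(K_{V_0})$, so every remaining edge $e$ of $D$ lies in its closure and hence shares a circuit with an edge of $B_0+uv$; this pushes all of $D+uv$ into a single $\mathcal{R}_2$-component. Each completed piece $K_{V_i\cup S_i}$ (on at least $4$ vertices, hence $\mathcal{R}_2$-connected) shares the entire $S_i$-clique with $D+uv$, so gluing the pieces only merges $\mathcal{R}_2$-components. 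Thus $M+uv$ is $\mathcal{R}_2$-connected, and therefore globally rigid, giving the desired supergraph $H=M+uv$ of $B$ in which $uv$ is critical.

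The main obstacle is the tension inherent in choosing $M$: one must add enough edges to make $M+uv$ globally rigid, yet few enough that $\{u,v\}$ stays globally loose. The resolution is that completing the pieces outside $V_0$ does not change ${\rm Clique}(B,V_0)$, so global looseness is preserved by Theorem \ref{globlaza jellemzes2}, while the spanning circuit $B_0+uv$ supplies precisely the $\mathcal{R}_2$-connectivity needed once $uv$ is added. I expect the steps requiring the most care to be the closure/spanning-circuit argument for the $\mathcal{R}_2$-connectivity of $D+uv$, and the bookkeeping that completing the pieces leaves the neighbourhoods $S_i$ and the components $V_i$ (and thus ${\rm Clique}(M,V_0)$) intact.
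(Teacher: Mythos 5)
Your proof is correct and follows essentially the same route as the paper's: take the $\mathcal{R}_2$-circuit $B_0+uv$ witnessing linkedness, invoke Theorem \ref{globlaza jellemzes2} to see that ${\rm Clique}(B,V_0)$ is not globally rigid, complete the components outside $V_0$ together with their neighbourhoods so that global looseness of $\{u,v\}$ is preserved, and then verify that adding $uv$ yields a $3$-connected, $\mathcal{R}_2$-connected (hence globally rigid) graph via Theorem \ref{thm}. Your extra care in checking $uv\notin E(M)$ and your closure/gluing argument for $\mathcal{R}_2$-connectivity are just more explicit versions of steps the paper leaves implicit.
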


\begin{proof}
Since $\{u,v\}$ is linked in $B$, there exists a subgraph $B_0=(V_0,E_0)$ of $B$ with $u,v\in V(B_0)$
for which $B_0+uv$ is an ${\cal R}_2$-circuit.
By Theorem \ref{globlaza jellemzes2}, ${\rm Clique}(B,V_0)$ is not globally rigid.
Let $\bar B$ denote the supergraph of $B$ obtained by adding new edges that make
each component $C_i$ of 
$B-B_0$ a complete graph and make each vertex of $C_i$ connected to each vertex of 
$N_B(V(C_i))$.
Since $\bar B$ is 3-connected and ${\rm Clique}(\bar B,V_0)={\rm Clique}(B,V_0)$,
$\{u,v\}$ is globally loose in $\bar B$ by Theorem  \ref{globlaza jellemzes2}. 
Therefore, $\bar{B}$ is not globally rigid.
$B_0+uv$ is ${\cal R}_2$-connected and, since the addition of new edges or a vertex of degree at least three preserves ${\cal R}_2$-connectivity, $\bar{B}+uv$ is also ${\cal R}_2$-connected.
By using that $\bar B$ is $3$-connected, Theorem \ref{thm} implies that $\bar B+uv$ is a globally rigid supergraph of $B$ in which $uv$ is a critical edge.
\end{proof}

\begin{lemma}
\label{wglvsgl}
Let $G=(V,E)$ be a graph and $x,y,u,v\in V$. Suppose that $\{u,v\}$ is globally loose and $\{x,y\}$ is weakly globally linked in $G$ in $\R^2$. Then $\{u,v\}$ is globally loose in $G+xy$ in $\R^2$.
\end{lemma}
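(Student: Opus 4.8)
The plan is to prove the statement directly by exhibiting, for the graph $G+xy$, a globally rigid supergraph in which $uv$ is a critical edge; Lemma~\ref{critical} then immediately yields that $\{u,v\}$ is globally loose in $G+xy$. First I record the harmless reductions. Since $\{u,v\}$ is globally loose, the pair is non-adjacent, so $uv\notin E$; if $xy\in E$ then $G+xy=G$ and there is nothing to prove, so assume $xy\notin E$; and $\{u,v\}\neq\{x,y\}$, since otherwise $\{u,v\}$ would inherit weak global linkedness from $\{x,y\}$. In particular $uv$ is a genuine non-edge of $G+xy$, so it makes sense to ask for it to be critical in a supergraph.

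The heart of the argument is to produce a globally rigid supergraph $G^{+}$ of $G$ in which $uv$ is critical, using only that $\{u,v\}$ is globally loose in $G$. If $\{u,v\}$ is not linked in $G$, I build $G^{+}$ by hand, completing $G$ across a separating structure for $\{u,v\}$ so that $uv$ becomes an edge whose deletion destroys $\mathcal{R}_2$-connectivity or $3$-connectivity. If $\{u,v\}$ is linked, then by Lemma~\ref{cliquesum} I may assume $G$ is $2$-connected, and by Lemmas~\ref{notlinked} and \ref{kappa} I may assume $\kappa_G(u,v)\geq 3$. Lemma~\ref{lem:cl} then gives either that $(u,v)$ is a $2$-separator --- impossible here, since Lemma~\ref{pair} would make the linked pair $\{u,v\}$ weakly globally linked --- or a $3$-block $B$ in which $\{u,v\}$ is linked and non-adjacent. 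By the cleaving reduction (Lemma~\ref{linkedpairs and 2separ}, cf.\ Theorem~\ref{cleavunit}) the pair $\{u,v\}$ is globally loose in $B$ as well, and $B$ is $3$-connected, so Lemma~\ref{appendixlemma} supplies a globally rigid supergraph $B^{+}$ of $B$ with $uv$ critical. Finally I lift $B^{+}$ to a supergraph $G^{+}$ of $G$ by adding back, across each $2$-separator used in the cleaving, enough edges to make the reattached pieces globally rigid together (gluing globally rigid blocks along triangles rather than single edges), which preserves global rigidity while leaving $uv$ as the bottleneck edge. I expect this step --- effectively the converse of Lemma~\ref{critical}, extracting a globally rigid supergraph with $uv$ critical from mere global looseness, together with the bookkeeping of the lifting and of the non-linked case --- to be the main obstacle.

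With $G^{+}$ in hand, set $\hat G=G^{+}+xy$. Adding an edge preserves global rigidity, so $\hat G$ is globally rigid, and it remains to check that $uv$ is still critical, i.e.\ that $\hat G-uv=(G^{+}-uv)+xy$ is not globally rigid. Here the hypothesis on $\{x,y\}$ enters: weak global linkedness is monotone under edge addition (a generic realization witnessing it for $G$ also witnesses it after adding edges, as the extra edges only impose more constraints on equivalent frameworks), and $uv\notin E$ gives $G\subseteq G^{+}-uv$; hence $\{x,y\}$ is weakly globally linked in $G^{+}-uv$, that is $xy\in E(\tlc_2(G^{+}-uv))$. Applying Lemma~\ref{J}(a) to $G^{+}-uv$ with $F=\{xy\}$, and then Lemma~\ref{J}(b) together with idempotence of $\tlc_2$, we obtain that $(G^{+}-uv)+xy$ is globally rigid if and only if $\tlc_2(G^{+}-uv)$ is complete, equivalently if and only if $G^{+}-uv$ is globally rigid. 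Since $uv$ is critical in $G^{+}$, the graph $G^{+}-uv$ is not globally rigid, so neither is $\hat G-uv$. Thus $\hat G$ is a globally rigid supergraph of $G+xy$ in which $uv$ is critical, and Lemma~\ref{critical} gives that $\{u,v\}$ is globally loose in $G+xy$, as required.
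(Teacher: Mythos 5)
Your overall architecture is sound: if you could produce a globally rigid supergraph $G^{+}$ of $G$ in which $uv$ is critical, then your final step is correct and rather elegant --- monotonicity of weak global linkedness under edge addition puts $xy$ into $\tlc_2(G^{+}-uv)$, Lemma~\ref{J} then shows that adding $xy$ cannot make $G^{+}-uv$ globally rigid, so $uv$ stays critical in $G^{+}+xy$ and Lemma~\ref{critical} finishes. The problem is that the middle step, which you yourself flag as ``the main obstacle,'' is precisely the hard part and is not actually carried out. What you need is the full converse of Lemma~\ref{critical}: \emph{every} globally loose pair in an \emph{arbitrary} graph admits a globally rigid supergraph with $uv$ critical. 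The paper only proves this for linked pairs in $3$-connected graphs (Lemma~\ref{appendixlemma}). Your sketch of the remaining cases has two genuine holes. First, in the non-linked case you propose ``completing $G$ across a separating structure for $\{u,v\}$,'' but non-linkedness is a matroid condition, not a connectivity condition: $u$ and $v$ can be non-linked while $\kappa_G(u,v)\geq 3$ (three long internally disjoint paths), so there is no separating structure to complete across, and no construction is given. Second, the lifting of $B^{+}$ back to $G^{+}$ is asserted, not proved: when you reattach a cleaved-off component by gluing a globally rigid piece along three vertices of $B^{+}$, you necessarily add edges among those three vertices inside $V(B^{+})$, and these extra edges can turn $B^{+}-uv$ into a globally rigid graph, destroying the criticality of $uv$. ``Leaving $uv$ as the bottleneck edge'' is exactly what has to be verified and is not automatic.

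The paper avoids all of this by arguing by contradiction and never lifting anything back up. It assumes $\{u,v\}$ becomes weakly globally linked in $G+xy$, uses the linkedness of $\{x,y\}$ to show $\{u,v\}$ is linked in $G$, transfers the entire hypothesis (including the pair $\{x,y\}$) down to the $3$-block $B$ via Lemma~\ref{linkedpairs and 2separ}, applies Lemma~\ref{appendixlemma} there to get $\bar G$ with $\bar G+uv$ globally rigid and $\bar G$ not, and then deduces from Lemma~\ref{J} that $\bar G+xy$ is globally rigid --- which makes $xy$ critical and hence $\{x,y\}$ globally loose by Lemma~\ref{critical}, contradicting the hypothesis. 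In other words, the paper turns the criticality argument against $\{x,y\}$ rather than building a witness for $\{u,v\}$, which is exactly what lets it get by with the weak ($3$-connected, linked) form of the converse. To salvage your direct approach you would either have to prove the full converse of Lemma~\ref{critical} (essentially re-deriving the characterization in Theorem~\ref{cleavunit}) or restructure along the paper's lines.
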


\begin{proof}%
Suppose, for a contradiction, that there exists a graph $G=(V,E)$ in which
\begin{itemize}
\item[($\ast$)] $\{u,v\}$ is globally loose, $\{x,y\}$ is weakly globally linked, and adding $xy$ to the graph makes $\{u,v\}$ weakly globally linked.
\end{itemize}
It is easy to see that we may assume that $G$ is 2-connected.  Since $\{u, v\}$ is weakly globally linked in $G + xy$, $\{u, v\}$ is also linked in $G + xy$. We claim that $\{u, v\}$ is linked in $G$, too. This follows from the fact that $\{x, y\}$ is weakly globally linked, and hence linked, in $G$, so the addition of $xy$ to $G$ cannot make a non-linked pair linked. 
Let $B$ denote the 3-block of $\{u,v\}$. We claim that $(\ast)$ holds in $B$ as well. Consider a 2-separator $(a,b)$. Since $\{x,y\}$ is weakly globally linked in $G$, $(a,b)$ does not separate $x$ and $y$ in $G$. Similarly, $(a,b)$ does not separate $u$ and $v$ in $G+xy$. It follows that $(a,b)$ does not separate $u$ and $v$ in $G$ either. Therefore $(\ast)$ remains true after cleaving the graph along $(a,b)$ by Lemma 
\ref{linkedpairs and 2separ}. %
Thus by replacing $G$ with $B$ we may assume that $G$ is 3-connected. %
Then, by Lemma \ref{appendixlemma}, $G$ has a supergraph $\bar G$ for which $\bar G+uv$ is globally rigid and $\bar G$ is not. $\bar{G}+uv+xy$ is also globally rigid and so is $\bar{G}+xy$  by $(*)$ and Lemma \ref{J}. Hence, $\{x,y\}$ is globally loose in $\bar G$ by Lemma \ref{critical} and thus also in $G$, a contradiction.
\end{proof}

  We obtain the following corollary of Lemma \ref{wglvsgl}.

\begin{theorem}
\label{thm:onestep}
Let $G=(V,E)$ be a graph. %
 Then $$\tlc_2(G)=G+J,$$ where $J=\{uv : uv\notin E, \{u,v\} \ \hbox{is weakly globally linked in}\ G \hbox{ in } \R^2\}.\vspace{-0.2cm}$
\end{theorem}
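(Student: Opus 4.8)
The statement asserts that the totally loose closure of a graph $G$ in $\R^2$ is obtained in a single round of augmentation: adding all currently weakly globally linked non-adjacent pairs at once already yields a totally loose graph, with no need to iterate. Recall from the proof of the well-definedness lemma that $\tlc_2(G)$ is built by repeatedly adding weakly globally linked non-adjacent pairs until none remain. So to prove $\tlc_2(G)=G+J$, it suffices to show that $G+J$ is itself totally loose, i.e.\ that no new weakly globally linked pair is created by the simultaneous addition of the edges in $J$. Since $J$ contains every weakly globally linked non-adjacent pair of $G$, every non-adjacent pair $\{u,v\}$ of $G+J$ was \emph{globally loose} in $G$. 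The plan is therefore to show that such a pair remains globally loose after adding all of $J$.

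First I would reduce to adding the edges of $J$ one at a time. The natural tool is Lemma~\ref{wglvsgl}: if $\{u,v\}$ is globally loose and $\{x,y\}$ is weakly globally linked in a graph, then $\{u,v\}$ stays globally loose after adding $xy$. I would order the edges of $J$ as $f_1,\dots,f_m$ and argue by induction that $\{u,v\}$ is globally loose in $G+\{f_1,\dots,f_j\}$ for every $j$, for each non-adjacent pair $\{u,v\}$ of $G+J$. The base case ($j=0$) is exactly the observation that $\{u,v\}$ is globally loose in $G$.

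The key step — and the main obstacle — is verifying the inductive step, which requires the hypothesis of Lemma~\ref{wglvsgl} to hold at each stage: to add $f_{j+1}=x y$ and conclude $\{u,v\}$ stays globally loose, I need $\{x,y\}$ to be weakly globally linked \emph{in the intermediate graph} $G_j := G+\{f_1,\dots,f_j\}$, not merely in $G$. By construction $f_{j+1}\in J$ is weakly globally linked in $G$, and I must argue that weak global linkedness of $\{x,y\}$ is preserved under the addition of the earlier edges $f_1,\dots,f_j$. This is where care is needed, since adding edges can in principle affect linkedness; however, adding edges can only help weak global linkedness (it cannot destroy a $(u,v)$-rigid subgraph or a connecting path, so the sufficient conditions such as Lemma~\ref{coro} are monotone), so $\{x,y\}$ remains weakly globally linked in every $G_j$. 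With this monotonicity established, each invocation of Lemma~\ref{wglvsgl} is justified.

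Combining these, every non-adjacent pair of $G+J$ is globally loose in $G+J$, so $G+J$ is totally loose; and since every edge of $J$ is weakly globally linked in $G$, each such edge lies in every totally loose supergraph of $G$, whence $G+J$ is the \emph{minimal} one. Therefore $\tlc_2(G)=G+J$, as claimed. I would expect the write-up to be short, with essentially all the substance carried by Lemma~\ref{wglvsgl} and the monotonicity remark.
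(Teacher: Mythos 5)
Your proof is correct and follows exactly the route the paper intends: Theorem \ref{thm:onestep} is stated there as a corollary of Lemma \ref{wglvsgl}, obtained precisely by adding the edges of $J$ one at a time and checking that globally loose pairs stay globally loose. One small repair: the monotonicity of weak global linkedness under edge addition should be justified directly from the definition (if $\{x,y\}$ is globally linked in a generic framework $(G,p)$, then it is globally linked in $(G+f,p)$, since every framework equivalent to $(G+f,p)$ is a fortiori equivalent to $(G,p)$), rather than by observing that the sufficient condition of Lemma \ref{coro} is monotone --- monotonicity of a sufficient condition does not by itself give monotonicity of the property.
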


\section{Totally loose graphs}
\label{sec:Tloose}

Let us fix the dimension $d\geq 1$. Recall that a graph $G=(V,E)$ is called  totally loose if 
$\{u,v\}$ is globally loose in $G$ for all $u,v\in V$ with $uv\notin E$. In this section we prove a number of structural results on totally loose graphs.
Let us also introduce three related graph families.
A rigid graph is called $d$-{\it special}, if every rigid proper induced subgraph of $G$ is complete\footnote{$2$-special
graphs were simply called special graphs in \cite{JJS}, where they were first defined as a subfamily of the minimally rigid graphs in $\R^2$.
Note that in a minimally rigid graph every rigid subgraph is induced.
}. A graph $G$ is said to be {\it saturated non-globally rigid}, or SNGR, if $G$
is not globally rigid but $G+uv$ is globally rigid for all $u,v$ with $uv\notin E$.
SNGR graphs are rigid (by Theorem \ref{thm}) and totally loose (by Lemma \ref{critical}).

\subsection{A construction of rigid totally loose graphs in $\R^d$}

In this subsection we show that every rigid totally loose graph in $\R^d$ can be built up from $d$-special graphs
by clique sum operations. First we prove two lemmas.

\begin{lemma}
\label{glpf}
Let $G=(V,E)$ be a totally loose graph in $\R^d$. Then the following hold. 
\begin{enumerate}[(a)]
\item For every rigid subgraph $G[V_0]$ and each component $C$ of $G-V_0$ the set
$N_G(V(C))$ induces a clique in $G$.
\item If $G[V_1]$ and $G[V_2]$ are rigid subgraphs of $G$ with $V_1\cap V_2\neq \emptyset$, then $G[V_1\cap V_2]$ is rigid.
\end{enumerate}

\end{lemma}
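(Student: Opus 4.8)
The plan is to prove (a) directly from Lemma \ref{coro}, and then to reduce (b) to a purely rigidity-theoretic ``folding'' statement which I would establish by peeling off clique-attached pieces and invoking Gluck's count (Theorem \ref{theorem:gluck}).

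For (a): let $V_0$ be rigid and let $C$ be a component of $G-V_0$; since $C$ is a component of $G-V_0$ we have $N_G(V(C))\subseteq V_0$. Suppose for contradiction that $N_G(V(C))$ is not a clique, so there are $u,v\in N_G(V(C))$ with $uv\notin E$. Choose $c_u,c_v\in V(C)$ with $uc_u, vc_v\in E$ and a $c_u$--$c_v$ path inside the connected graph $C$; prefixing $u$ and suffixing $v$ yields a $uv$-path whose internal vertices lie in $V(C)$ and are therefore disjoint from $V_0$. As $G[V_0]$ is rigid and contains both $u$ and $v$, it is $(u,v)$-rigid, so Lemma \ref{coro} shows that $\{u,v\}$ is weakly globally linked in $G$. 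Since $uv\notin E$ and $G$ is totally loose, this is a contradiction; hence $N_G(V(C))$ is a clique.

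For (b): I would first dispose of the degenerate cases (if $|V_2|\le d$ then $G[V_2]$, and hence $G[V_1\cap V_2]$, is complete, and a one-vertex set is trivially rigid). In the main case set $W=V_1\cap V_2$ and $H=G[V_2]$, which is rigid, and apply part (a) with $V_0=V_1$: every component of $G-V_1$ has a clique neighbourhood. A component $D$ of $H-W=G[V_2\setminus V_1]$ lies inside a single component $\tilde C$ of $G-V_1$, and its neighbourhood in $H$ satisfies $N_H(V(D))\subseteq N_G(V(\tilde C))\cap W$, a subset of the clique $N_G(V(\tilde C))$, hence itself a clique. Thus $H$ and $W$ satisfy the hypotheses of the following \emph{folding claim}, whose conclusion is exactly that $G[W]=H[W]$ is rigid: \emph{if $H$ is rigid in $\R^d$ and every component of $H-W$ has a clique neighbourhood in $H$, then $H[W]$ is rigid.} I would prove this claim by deleting the components $D_1,\dots,D_m$ of $H-W$ one at a time. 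Writing $K_j=N_H(V(D_j))$, the graph just before deleting $D_j$ is the clique sum, along the clique $K_j$, of the graph obtained after the deletion with $H[K_j\cup V(D_j)]$. The key step is the reverse clique-sum lemma: \emph{if a rigid graph $G'$ is a clique sum of $A$ and $B$ along a clique $K$ with $|K|\ge d$, then both $A$ and $B$ are rigid}. This follows from submodularity of the rigidity matroid rank, $r_d(A\cup B)+r_d(K)\le r_d(A)+r_d(B)$, by substituting $r_d(A\cup B)=d|V(A\cup B)|-\binom{d+1}{2}$ and $r_d(K)=d|K|-\binom{d+1}{2}$ (valid as $|K|\ge d$) and comparing with the trivial upper bounds $r_d(A)\le d|V(A)|-\binom{d+1}{2}$ and $r_d(B)\le d|V(B)|-\binom{d+1}{2}$; this forces equality, so both $A$ and $B$ meet Gluck's bound and are rigid. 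One also checks that the attaching cliques have size at least $d$: a rigid graph cannot be a nontrivial clique sum along fewer than $d$ vertices, since otherwise one side could be rotated about the low-dimensional affine span of the shared clique. Iterating the reverse clique-sum lemma peels $H$ down to $H[W]$ while preserving rigidity at each step, which proves the folding claim.

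The main obstacle is the folding claim, and within it the reverse clique-sum lemma; the delicate points are verifying that the shared cliques have size at least $d$ (so that Gluck's formula applies to $r_d(K)$) and handling the boundary situations in which an intermediate graph has at most $d$ vertices, where ``rigid'' must be read as ``complete''. By contrast, part (a) and the reduction of (b) to the folding claim are short, direct consequences of Lemma \ref{coro} and part (a), respectively.
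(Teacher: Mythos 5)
Your proof is correct and follows essentially the same route as the paper: part (a) is a direct application of Lemma \ref{coro}, and part (b) reduces, via part (a) applied to the other rigid set, to the fact that a rigid graph whose components outside $W$ attach along cliques ``folds down'' to a rigid graph on $W$. The paper dispatches this last step in one sentence (``the rigidity of $G_1$ implies that $G_0$ is also rigid''), whereas you supply the missing detail with a correct reverse clique-sum argument based on submodularity and Gluck's count --- a more explicit rendering of the same idea rather than a different method.
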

\begin{proof}
 (a) directly follows from Lemma \ref{coro}. To prove (b) suppose that $G_1=G[V_1]$ and $G_2=G[V_2]$ are rigid subgraphs of $G$, and let $V_0=V_1\cap V_2$, $G_0=G[V_0]$. Let $U_1,\dots, U_k$ be the vertex sets of the components of $G_1-G_0$. By the rigidity of $G_2$ and statement (a), $N_{G_1}(U_i)\cap V_0$ induces a clique in $G_0$ for all $i\in \{1,\dots, k\}$. Hence, the rigidity of $G_1$ implies that $G_0$ is also rigid.
\end{proof}

In a graph $G=(V,E)$ a proper subset $U$ of $V$ is called a {\it clique separator} if $G[U]$ is a clique and $G-U$ is not connected. 
The graph obtained from the complete graph on $d+2$ vertices by deleting an edge is denoted by  $K_{d+2}-e$.

\begin{lemma}\label{rspeciallemma}
Let $G=(V,E)$ be a rigid totally loose graph in $\R^d$.
\begin{enumerate}[(a)]
\item Suppose that $|V|\leq d+2$. Then $G$ is either complete or isomorphic to $K_{d+2}-e$. Thus $G$ is $d$-special.
\item Suppose that $|V|\geq d+3$. Then $G$ is $d$-special if and only if there is no clique separator in $G$.

\end{enumerate}
\end{lemma}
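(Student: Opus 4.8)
The plan is to handle the two parts separately, using Gluck's rank formula (Theorem~\ref{combrigid}) for the edge counting in (a), and the structural Lemma~\ref{glpf}(a) together with a submodularity computation for (b). For part (a) I would argue purely from rigidity. If $|V|\le d+1$ then a rigid graph is necessarily complete: for $|V|=d+1$ this is immediate from Theorem~\ref{combrigid}, since $r_d(G)=d(d+1)-\binom{d+1}{2}=\binom{d+1}{2}$ forces all edges to be present, and for $|V|\le d$ it is the standard fact that $K_n$ is the only rigid graph. For $|V|=d+2$, Theorem~\ref{combrigid} gives $r_d(G)=d(d+2)-\binom{d+1}{2}=\tfrac{d(d+3)}{2}$; since the rank is at most the number of edges while $\binom{d+2}{2}=\tfrac{d(d+3)}{2}+1$, the graph $G$ has at most one non-edge, hence $G$ is $K_{d+2}$ or $K_{d+2}-e$. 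That $K_{d+2}-e$ is rigid follows by adding the two endpoints of $e$ one at a time, each of degree $d$, to the clique on the remaining $d$ vertices. Both graphs are $d$-special because every proper induced subgraph has at most $d+1$ vertices, where rigidity is equivalent to completeness.

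For part (b), the forward implication I would prove contrapositively. Assuming $G$ is not $d$-special and recalling that $G$ is rigid, there is a rigid proper induced subgraph $G[V_0]$ that is not complete. Choosing a component $C$ of $G-V_0$ (nonempty as $V_0$ is proper) and setting $U:=N_G(V(C))$, Lemma~\ref{glpf}(a) shows that $U$ is a clique, and $U\subseteq V_0$. Since $G[V_0]$ is not complete we have $U\subsetneq V_0$, so $V_0\setminus U\neq\emptyset$; as all neighbours of $C$ lie in $U$, the set $V_0\setminus U$ lies in a component of $G-U$ distinct from $C$. Thus $U$ is a clique separator, as required.

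For the reverse implication I would start from a clique separator $U$, with components $C_1,\dots,C_r$ ($r\ge2$) of $G-U$, and put $t=|U|$. First I would record that $t\ge d$: otherwise the affine span of $U$ in a generic realization has dimension at most $d-2$, and rotating the vertices of one component about this span while fixing everything else is a nontrivial edge-preserving motion, contradicting rigidity. Writing $G_i=G[V(C_i)\cup U]$ and $G_i'=G[V\setminus V(C_i)]$, these share exactly the clique $K_t$, so submodularity of the rank gives $r_d(G)\le r_d(G_i)+r_d(G_i')-r_d(K_t)$. Substituting the maximum-rank bounds for $G_i,G_i'$ together with $r_d(K_t)=dt-\binom{d+1}{2}$ (valid for $t\ge d$), the right-hand side collapses to $d|V|-\binom{d+1}{2}=r_d(G)$, forcing equality and hence the rigidity of each $G_i$. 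Now if some $G_i$ is not complete, it is the desired rigid, non-complete, proper induced subgraph. If every $G_i$ is complete, pick $a\in C_1$, $b\in C_2$: they are non-adjacent but both joined to all of $U$, so $G[U\cup\{a,b\}]=K_{t+2}-ab$ is rigid and not complete, and is proper unless $V=U\cup\{a,b\}$; in that exceptional case $|V|=t+2\ge d+3$ forces $t\ge d+1$, and deleting any vertex of $U$ yields $K_{t+1}-ab$, still rigid (as $t-1\ge d$) and non-complete, but now proper.

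The main obstacle I anticipate is the reverse implication of (b): establishing that the clique-separated pieces $G_i$ are rigid (the submodularity and Gluck bookkeeping, which also quietly requires $t\ge d$) and, more delicately, guaranteeing a non-complete rigid proper subgraph even when every $G_i$ is complete. It is precisely this last situation that forces the hypothesis $|V|\ge d+3$, since $K_{d+2}-e$ has a clique separator of size $d$ yet is $d$-special; ruling out this borderline configuration is the crux of the edge-case analysis.
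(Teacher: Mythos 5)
Your proof is correct, and for part (a) and the ``if'' direction of part (b) it follows essentially the same route as the paper (reduce (a) to the standard small-vertex facts plus the rigidity of $K_{d+2}-e$; derive the ``if'' direction from Lemma \ref{glpf}(a) by looking at $N_G(V(C))$ for a component $C$ of the complement of a rigid non-complete proper induced subgraph). Where you genuinely diverge is in the ``only if'' direction of (b). The paper applies $d$-specialness immediately to conclude that the two pieces $G[V(C_i)\cup U]$ are \emph{complete}, which silently presupposes that these pieces are rigid, and then pins $G$ down as $K_{d+2}-e$ via a $d$-element subset of $U$. You instead prove the contrapositive and make the suppressed step explicit: the hinge argument gives $|U|\ge d$, and the submodularity computation $r_d(G)\le r_d(G_i)+r_d(G_i')-r_d(K_{|U|})=d|V|-\binom{d+1}{2}$ forces each piece to be rigid; you then exhibit a rigid non-complete proper induced subgraph directly (one of the $G_i$, or $K_{t+2}-ab$, or $K_{t+1}-ab$ in the borderline case $V=U\cup\{a,b\}$). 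Your version is longer but more self-contained -- it justifies the rigidity of the clique-separator pieces, which the paper asserts without proof -- and it isolates exactly why the hypothesis $|V|\ge d+3$ is needed, namely to exclude $K_{d+2}-e$ itself. Notably, your reverse implication uses only rigidity and never the totally loose hypothesis, so it is in fact slightly more general than stated.
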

\begin{proof} 
(a) follows from the facts that a graph $G$ on at most $d+1$ vertices is rigid if and only if $G$ is complete, and that $K_{d+2}-e$
is minimally rigid in $\R^d$.

(b) To prove the “if" direction suppose that there is no clique separator in $G$. Let $U$ be a proper subset of $V$ for which $G[U]$ is rigid. We shall prove that $U$ induces a clique. Let $C$ be a component of $G-U$. If $N_G(V(C))\subsetneq U$, then $N_G(V(C))$ is a clique separator by Lemma \ref{glpf}(a), which is a contradiction. Thus $N_G(V(C))=U$, and hence $U$ induces a clique by Lemma \ref{glpf}(a). %

To prove the “only if" direction suppose that $G$ is $d$-special and a proper subset $U$ of $V$ is a clique separator. 
Let $C_1,C_2$ be two components of $G-U$ and let $c_i\in V(C_i)$, $i=1,2$.
Since $G$ is rigid and $R$-special, the subgraphs $G[V(C_i)\cup U]$, $i=1,2$, are complete and
$|U|\geq d$ holds. Let $U'$ be a subset of $U$ with $|U'|=d$. Then $G[U'\cup \{c_1,c_2\}]$ is
rigid but not complete. Since $G$ is $d$-special, we must have $U=U'$ and $V=U\cup \{c_1,c_2\}$.
Thus $G$ is isomorphic to $K_{d+2}-e$.
\end{proof}

\begin{theorem}\label{Tlooseconstr}
Let $G$ be a rigid totally loose graph in $\R^d$. Then $G$ can be obtained from $d$-special graphs 
by a sequence of clique sum operations.
\end{theorem}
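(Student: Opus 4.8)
The plan is to prove Theorem \ref{Tlooseconstr} by induction on $|V|$, using the clique-separator structure of rigid totally loose graphs that Lemma \ref{rspeciallemma} makes available. The base case consists of graphs with no clique separator: by Lemma \ref{rspeciallemma}(b) (together with part (a) for the small cases $|V|\leq d+2$) such a graph is already $d$-special, so there is nothing to decompose. Hence I would set up the induction so that the inductive step handles exactly the graphs that \emph{do} possess a clique separator.

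For the inductive step, suppose $G=(V,E)$ is rigid and totally loose and has a clique separator $U$, so that $G[U]$ is a complete subgraph on some set of vertices and $G-U$ is disconnected. Let $C$ be one connected component of $G-U$, let $V_1=V(C)\cup U$ and let $V_2=(V\setminus V(C))\cup U$; thus $V_1\cap V_2=U$ and $G$ is the $|U|$-clique sum of $G[V_1]$ and $G[V_2]$ along the clique $G[U]$. The crux is to verify that both pieces $G[V_1]$ and $G[V_2]$ are themselves rigid totally loose graphs in $\R^d$, for then each has fewer vertices than $G$ (since $C$ is nonempty and $G-U$ has at least two components), and the induction hypothesis applies to write each piece as a clique sum of $d$-special graphs; reassembling along $U$ then expresses $G$ itself as such a clique sum.

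The first half of this verification---that $G[V_1]$ and $G[V_2]$ are rigid---I would obtain from the standard fact that rigidity is preserved under clique sums, or argue it directly: because $U$ is a clique and $G$ is rigid, a component pasted onto a rigid clique along $\geq d$ common vertices forces each side to be rigid (one can also lean on Lemma \ref{glpf}(a), which shows the attaching neighbourhoods are cliques of size at least $d$). The second half---that each piece is \emph{totally loose}---is the more delicate point and I expect it to be the main obstacle. I would prove it by contradiction: if, say, $G[V_1]$ were not totally loose, then it would contain a non-adjacent weakly globally linked pair $\{u,v\}$ with $u,v\in V_1$. Since $G[V_1]$ is obtained from $G$ by the cleaving/clique-sum decomposition along the clique $U$, Lemma \ref{cliquesum} says that weak global linkedness of a pair inside $G[V_1]$ is equivalent to weak global linkedness of that same pair in $G$. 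Therefore $\{u,v\}$ would be weakly globally linked in $G$ as well, contradicting that $G$ is totally loose.

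The subtle part of that argument, and the step I would take most care over, is to present $G$ genuinely as a clique sum of $G[V_1]$ and $G[V_2]$ so that Lemma \ref{cliquesum} is literally applicable: one must check that $U$ being a clique separator gives a clean decomposition with no edges of $G$ running between $V(C)$ and $V\setminus V_1$ except through $U$, which is exactly what ``clique separator'' guarantees. Once the equivalence of weak global linkedness in $G$ and in the pieces is in hand for both $G[V_1]$ and $G[V_2]$, total looseness of each piece follows from total looseness of $G$, the induction closes, and the clique-sum decompositions of the pieces combine along $G[U]$ into a single clique-sum expression of $G$ over $d$-special graphs. I would finish by noting that the leaves of this decomposition are precisely the clique-separator-free rigid totally loose subgraphs, which are $d$-special by Lemma \ref{rspeciallemma}.
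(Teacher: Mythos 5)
Your proof is correct and follows essentially the same route as the paper's: induction on $|V|$, splitting along a clique separator guaranteed by Lemma \ref{rspeciallemma}, checking that the two pieces are rigid and totally loose, and reassembling by a clique sum. The only cosmetic difference is that you invoke Lemma \ref{cliquesum} to transfer total looseness to the pieces, where the paper simply observes that the pieces are induced subgraphs of a totally loose graph and hence totally loose; your justification is, if anything, slightly more explicit.
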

\begin{proof}
The proof is by induction on $|V|$. If $|V|\leq d+2$, then $G$ is $d$-special by Lemma \ref{rspeciallemma}(a), and hence the statement is obvious. Suppose that $|V|\geq d+3$. If $G$ is $d$-special, then we are done. Suppose that $G$ is not $d$-special. Then there is a clique separator $V_0\subsetneq V$ in $G$ by Lemma \ref{rspeciallemma}(b). Let $V_1$ be the vertex set of some component of $G-V_0$. Let $G'=G-V_1$ and $G''=G[N_G(V_1)\cup V_1]$. As $G'$ and $G''$ are induced subgraphs of a totally loose graph, both of them are totally loose.
Since $V_0$ is a clique separator and $G$ is rigid,
both $G'$ and $G''$ are rigid.
It follows by induction that $G'$ and $G''$ can be obtained from $d$-special graphs by a sequence of clique sum operations.
 $G$ can be obtained
from $G'$ and $G''$ by a clique sum operation, and thus the theorem follows.
\end{proof}

In the rest of this section we consider the case $d=2$.

\subsection{Totally loose graphs in $\R^2$}

In this subsection we strengthen the results on totally loose graphs and study the relations between $2$-special, SNGR, and totally loose graphs in $\R^2$.
See also
\cite{GJunitball,JJS} for some related
results.

\begin{lemma}
\label{gs}
Let $G=(V,E)$ be a $2$-special graph. Then one of the following holds:\\
(i) $G$ is a complete graph,\\
(ii) $G=K_4-e$,\\
(iii) $G$ is a $3$-connected SNGR graph in $\R^2$.
\end{lemma}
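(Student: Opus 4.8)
The plan is to separate off the trivial small cases and then, for $|V|\ge 5$, verify directly the two defining properties of an SNGR graph—that $G$ is not globally rigid and that adding any missing edge makes it globally rigid—after first upgrading $G$ to a $3$-connected graph. For $|V|\le 4$ I would simply quote Lemma \ref{rspeciallemma}(a), which already forces $G$ to be complete or $K_4-e$, i.e. case (i) or (ii). So assume $|V|\ge 5$; if $G$ is complete we are in case (i), so assume also that $G$ is not complete. By Lemma \ref{rspeciallemma}(b) such a $G$ has no clique separator, hence no cut vertex and every $2$-separator $\{a,b\}$ satisfies $ab\notin E$. To rule out a non-edge $2$-separator $\{a,b\}$ I would argue by a rank count: taking a component $C$ of $G-\{a,b\}$ and setting $V_1=V(C)\cup\{a,b\}$, $V_2=V\setminus V(C)$, each $G[V_i]$ is a proper induced subgraph containing the non-adjacent pair $\{a,b\}$, so it cannot be both rigid and complete; by $2$-specialness it is therefore not rigid, whence $r_2(G[V_i])\le 2|V_i|-4$ by Theorem \ref{theorem:gluck}. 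Since $ab\notin E$, the edge set of $G$ splits as the disjoint union of $E(G[V_1])$ and $E(G[V_2])$, so submodularity gives $2|V|-3=r_2(G)\le r_2(G[V_1])+r_2(G[V_2])\le 2|V|-4$, a contradiction. Hence $G$ is $3$-connected.

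For saturation, fix a non-adjacent pair $\{u,v\}$. As $G$ is rigid, $\{u,v\}$ is linked, so there is a subgraph $B_0=(V_0,E_0)$ of $G$ with $u,v\in V_0$ and $B_0+uv$ an $\mathcal R_2$-circuit; then $B_0$ is a spanning minimally rigid subgraph of $G[V_0]$, so $G[V_0]$ is rigid. The crucial observation is that $V_0=V$: otherwise $G[V_0]$ would be a rigid proper induced subgraph, hence complete by $2$-specialness, forcing $uv\in E$. Thus $G+uv$ contains the \emph{spanning} $\mathcal R_2$-circuit $Z=B_0+uv$. Since $Z$ is rigid and spanning, $Z$ minus one edge is a basis of $G+uv$, and computing the fundamental circuit of each remaining edge with respect to this basis shows that every edge lies in a common circuit with edges of $Z$; hence $G+uv$ is $\mathcal R_2$-connected, and being $3$-connected it is globally rigid by Theorem \ref{thm}. (The same identity $V_0=V$ makes ${\rm Clique}(G,V_0)=G$, so Theorem \ref{globlaza jellemzes2} yields the clean dichotomy that a non-adjacent pair is weakly globally linked if and only if $G$ itself is globally rigid.)

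It remains to show that $G$ is not globally rigid, equivalently, since $G$ is $3$-connected, that $G$ is not $\mathcal R_2$-connected (Theorem \ref{thm}); this is where I expect the real difficulty to lie. If $G$ is minimally rigid there is nothing to prove, since an independent set is never $\mathcal R_2$-connected, so assume $G$ contains a circuit. Here $2$-specialness tightly constrains the circuits: any $\mathcal R_2$-circuit $C$ with $V(C)\subsetneq V$ has $G[V(C)]$ rigid and proper, hence complete, so $C$ sits inside a clique. Moreover, if $W$ is a maximal clique and $z\notin W$ had two neighbours $a,b\in W$, then together with a non-neighbour $w\in W$ the set $\{z,a,b,w\}$ would induce a rigid non-complete $K_4-e$, a contradiction; so every vertex outside a maximal clique attaches to it by at most one edge. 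The plan is to combine these two facts—using also Lemma \ref{glpf} and $3$-connectivity—to show that the redundant edges inside the $K_4$-cliques of $G$ cannot be linked by any circuit to the edges outside them, so that such an outside edge is an $\mathcal R_2$-bridge and $G$ fails to be $\mathcal R_2$-connected. The main obstacle is precisely to control this clique interface: I must rule out a spanning circuit of $G$ that simultaneously uses clique-internal edges and outside edges while keeping $G$ $2$-special, and the sparse-attachment bounds forced by the forbidden $K_4-e$ (supplemented by degree and Laman-sparsity counts on the outside part) are the estimates I would push hardest to close this last step.
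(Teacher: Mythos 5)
Your first two steps are sound and, for the saturation part, essentially coincide with the paper's own argument: the rank count $r_2(G)\le r_2(G[V_1])+r_2(G[V_2])\le 2|V|-4$ correctly kills non-adjacent $2$-separators, and the observation that a linked non-adjacent pair in a $2$-special graph forces a \emph{spanning} $\mathcal{R}_2$-circuit in $G+uv$, whence $\mathcal{R}_2$-connectivity via fundamental circuits and global rigidity via Theorem \ref{thm}, is exactly how the paper proves saturation. One caveat: you invoke Lemma \ref{rspeciallemma}(a),(b), whose stated hypothesis is that $G$ is rigid \emph{and totally loose}; a $2$-special graph is not yet known to be totally loose at this point (that is essentially what Lemma \ref{gs} establishes), so the citation is formally circular. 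The direction you actually need ($d$-special with $|V|\ge d+3$ implies no clique separator) is provable without total looseness, so this is fixable, but as written it is a misapplication.

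The genuine gap is the last step: you never prove that a non-complete $2$-special graph on at least five vertices fails to be globally rigid. You explicitly present this as a ``plan'' whose ``main obstacle'' (ruling out a spanning circuit meeting both clique-internal and external edges) you do not overcome, and your intended route via sparse-attachment estimates and $\mathcal{R}_2$-bridges is not carried out. Without this step the trichotomy fails: a globally rigid, non-complete, $3$-connected $2$-special graph would satisfy none of (i)--(iii). The paper closes this with a short direct argument that you are missing: if $G$ were globally rigid and non-complete, pick a non-adjacent pair $\{u,v\}$ and edges $e,f$ incident with $u,v$ respectively; $\mathcal{R}_2$-connectivity (Theorem \ref{thm}) gives an $\mathcal{R}_2$-circuit $C$ containing $e$ and $f$, hence $u,v\in V(C)$. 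Since $|E(C)|=2|V(C)|-2$ and all degrees in $C$ are at least $3$, $C$ has at least four vertices of degree three, so some such vertex $x$ avoids $\{u,v\}$; then $C-x$ is rigid, so $G[V(C)\setminus\{x\}]$ is a rigid \emph{proper} induced subgraph containing the non-adjacent pair $\{u,v\}$, contradicting $2$-specialness. Replacing your unfinished third part with this argument would complete your proof.
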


\begin{proof}
If $G$ has a $2$-separator $(a,b)$, then it is easy to see that $ab\in E$ and $G$ is the $2$-clique sum of two complete graphs of size three
along $(a,b)$. Thus we have $G=K_4-e$.
So we may assume that $G$ is $3$-connected and $|V|\geq 5$.

Suppose that $G$ is globally rigid. Then either $G$ is a complete
graph or there exists a non-adjacent pair
$u,v\in V$. In the latter case let us pick
an edge $e$ (resp. $f$) in $G$ incident with
$u$ (resp. $v$).
Since $G$ is globally rigid, it is ${\cal R}_2$-connected by Theorem \ref{thm}. 
Hence there is an
${\cal R}_2$-circuit $C$ in $G$ with $e,f\subset E(C)$. 
Since $|E(C)|=2|V(C)|-2$ and $d_C(v)\geq 3$ for all $v\in V(C)$, it follows that 
$C$ has at least four vertices of degree three. Thus we can find a vertex $x$ of degree three in $C$ different from $u,v$.
Now $C-x$ is rigid, so $V(C)-x$ induces a rigid proper subgraph of $G$ that contains
$u$ and $v$, which contradicts the
$2$-special
property of $G$.
Thus $G$ is globally rigid
only if $G$ is complete.

Next suppose that $G$ is not globally rigid.
Consider a pair $u,v\in V$ with $uv\notin E$.
Since $G$ is $2$-special, $H+uv$ is a
spanning ${\cal R}_2$-circuit of $G$ for any minimally rigid spanning subgraph $H$ of $G$. Thus $G+uv$ is ${\cal R}_2$-connected. 
Hence $G+uv$ is globally rigid by Theorem \ref{thm}.
So in this case it follows that $G$ is a $3$-connected SNGR graph.
\end{proof}

\begin{samepage}
\noindent Thus for rigid graphs in $\R^2$ we have the following chain of containment relations.
The examples in Figure \ref{fig:gen7} show that each of these containment relations is strict.

\medskip
$\ \ \ \ \ \ \ \ \ \ \ \ \ \ $  $2$-special \ $\subseteq$ \ SNGR or complete \  
$\subseteq$ \ totally loose 
\end{samepage}

\begin{figure}[ht]
\centering
\includegraphics[scale=1.5]{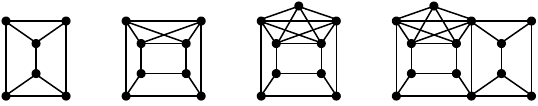}
\caption{The two  graphs on the left are rigid and $2$-special. The third graph is SNGR in $\R^2$, while the
rightmost graph  is totally loose in $\R^2$. }
\label{fig:gen7}
\end{figure}

\newpage
Next, we give a characterization of totally loose graphs in $\R^2$. Consider a $t$-clique sum operation of the graphs $G_1=(V_1,E_1)$ and $G_2=(V_2,E_2)$ along the cliques $K_1,K_2$ with $t\geq 2$ and $K_i\subseteq G_i$ for $i=1,2$. We say that this operation is a {\it restricted clique sum operation} if $t= 2$ or if  $t\geq 3$ and 
there exists an $i\in \{1,2\}$ for which $K_i$ is a maximal clique of $G_i$.

\begin{theorem} 
\label{glpf:con}
Let $G=(V,E)$ be a connected graph. Then $G$ is a rigid totally loose graph in $\R^2$ if and only if
$G$ can be obtained from $3$-connected SNGR graphs in $\R^2$ and complete graphs by a sequence of restricted clique sum
operations.
\end{theorem}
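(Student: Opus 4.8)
The plan is to prove both directions by induction on $|V|$, using the decomposition of rigid totally loose graphs into $d$-special pieces (Theorem \ref{Tlooseconstr}) and their classification in the plane (Lemma \ref{gs}). The building blocks are rigid and totally loose: complete graphs trivially, and $3$-connected SNGR graphs by the remark following their definition. The only purely rigidity-theoretic input I use is the standard fact that in $\R^2$ the $t$-clique sum of two rigid graphs along a clique is rigid whenever $t\geq 2$, and conversely that a rigid graph with a clique separator $K$ (necessarily $|K|\geq 2$, since rigid graphs on at least three vertices are $2$-connected) splits into two rigid graphs along $K$. The heart of the argument is to show that the clique sums can be taken \emph{restricted} (the ``only if'' direction) and that restricted clique sums preserve total looseness (the ``if'' direction).

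For the \emph{``only if''} direction, let $G$ be rigid, totally loose and connected. If $G$ is $2$-special, then by Lemma \ref{gs} it is complete, equal to $K_4-e$, or a $3$-connected SNGR graph; since $K_4-e$ is the $2$-clique sum of two triangles, which is restricted as $t=2$, each case is done. Otherwise $|V|\geq 5$ and Lemma \ref{rspeciallemma}(b) yields a clique separator; I pick a component $C$ of $G-K$, set $K'=N_G(C)$ (a clique by Lemma \ref{glpf}(a)), $G_2=G[V(C)\cup K']$ and $G_1=G-V(C)$, so that $G$ is the $|K'|$-clique sum of the induced, hence totally loose, rigid subgraphs $G_1,G_2$ along $K'$, with both strictly smaller. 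If $|K'|=2$, or if $|K'|\geq 3$ and $K'$ is maximal in $G_2$, the sum is restricted and I recurse. If $|K'|\geq 3$ and $K'$ is not maximal in $G_2$, some $w\in V(C)$ is adjacent to all of $K'$; absorbing such vertices into the clique strictly shrinks $V(C)$, so after finitely many steps either $K'$ becomes maximal in $G_2$ or $G_2$ becomes complete. In the latter case I claim $K'$ is maximal in $G_1$: a vertex $w_1\in V(G_1)\setminus K'$ adjacent to all of $K'$, together with any $w_2\in V(C)$, would induce $K_{|K'|+2}-w_1w_2$ (note $w_1w_2\notin E$, as $K'$ separates them), which has at least five vertices and is therefore globally rigid in $\R^2$ by Theorem \ref{thm}, making $\{w_1,w_2\}$ globally linked and contradicting total looseness. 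Hence the sum is restricted in every case.

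For the \emph{``if''} direction, rigidity is immediate by induction from the preservation fact above, so I focus on total looseness, writing $G$ as a restricted clique sum of rigid totally loose $G_1,G_2$ along a clique $K$ with $|K|=t$ and checking that every non-adjacent $\{u,v\}$ is globally loose. If both lie in $V(G_1)$, then by Lemma \ref{cliquesum} $\{u,v\}$ is weakly globally linked in $G$ iff it is in $G_1$, which fails; the case of $V(G_2)$ is symmetric. For a cross pair $u\in V(G_1)\setminus K$, $v\in V(G_2)\setminus K$ with $t=2$, we have $\kappa_G(u,v)\leq 2$ and Lemma \ref{kappa} applies. When $t\geq 3$, the restricted hypothesis lets me assume $K$ is a maximal clique of $G_1$, and this is the crux. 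I would reduce, via Lemma \ref{cliquesum} and Lemmas \ref{notlinked}, \ref{kappa}, to the case where $G$ is $2$-connected and $\{u,v\}$ is linked with $\kappa_G(u,v)\geq 3$, so Theorem \ref{cleavunit} applies. Its first alternative is impossible: since $G_1,G_2$ are $2$-connected and $t\geq 3$, the graph $G-\{u,v\}$ is the union of the connected graphs $G_1-u$ and $G_2-v$ glued along the nonempty clique $K$, hence connected, so $(u,v)$ is not a $2$-separator. It remains to refute the second alternative, i.e.\ to show ${\rm Clique}(B,V_0)$ is \emph{not} globally rigid, where $B$ is the $3$-block of $\{u,v\}$ and $B_0=(V_0,E_0)\subseteq B$ has $u,v\in V_0$ with $B_0+uv$ an ${\cal R}_2$-circuit. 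The plan is to track the separating clique $K$ through the cleaving operations producing $B$ and through the ${\rm Clique}$ operation, and to use the maximality of $K$ in $G_1$ to exhibit a surviving clique cut or missing edge in ${\rm Clique}(B,V_0)$ violating the $3$-connectivity or ${\cal R}_2$-connectivity required by Theorem \ref{thm}; equivalently, one builds a supergraph $\hat G\supseteq G$ with $\hat G+uv$ globally rigid but $\hat G$ not, so that $uv$ is critical and Lemma \ref{critical} gives global looseness.

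I expect this last case to be the main obstacle. The difficulty is that a cross pair can genuinely be \emph{linked}: a rigid subgraph of $G_1$ meeting part of $K$, joined through $K$ to a rigid subgraph of $G_2$, produces a rigid subgraph of $G$ containing both $u$ and $v$, so global looseness cannot come from non-linkedness and must be read off the finer characterization of weak global linkedness. The role of the maximality of $K$ is precisely to keep the two sides from sharing enough of $K$ to assemble a globally rigid cliqued structure across the separator; it forbids exactly the induced globally rigid $K_{t+2}-e$ that forced restrictedness in the ``only if'' direction. Making this blocking effect quantitative inside ${\rm Clique}(B,V_0)$ (or inside the critical-edge construction), so that it rules out \emph{all} straddling ${\cal R}_2$-circuits and not merely the complete one, is the technical core of the proof.
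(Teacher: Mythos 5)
Your ``only if'' direction is sound and in the same spirit as the paper's (which simply takes the decomposition from Theorem \ref{Tlooseconstr}, replaces each $K_4-e$ by clique sums of triangles, and notes that a non-restricted $t$-clique sum with $t\geq 3$ would leave an induced copy of the globally rigid graph $K_{t+2}-e$), though your absorption procedure is more laborious than necessary. The problem is the ``if'' direction. For a cross pair $u\in V(G_1)\setminus K$, $v\in V(G_2)\setminus K$ with $|K|=t\geq 3$, you only outline a plan --- reduce to Theorem \ref{cleavunit} and refute its alternative (ii) by tracking $K$ through the cleaving and ${\rm Clique}$ operations, or build a supergraph in which $uv$ is critical --- and you explicitly concede that making this work for all straddling ${\cal R}_2$-circuits is an unresolved ``technical core''. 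That is precisely the step carrying the content of the theorem, so as written the proof is incomplete.

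The paper closes this case without ever invoking Theorem \ref{cleavunit}. Write $S$ for the gluing clique (your $K$) and assume for contradiction that $\{u,v\}$ is weakly globally linked in $G$; after disposing of the case $\kappa_G(u,v)\leq 2$ via Lemma \ref{kappa}, one may assume $|N_G(U_1)|,|N_G(U_2)|\geq 3$, where $U_i$ is the component of $G-S$ containing $u$ resp.\ $v$. For any $q\in N_G(U_2)$, the rigid subgraph $G_1$ contains $q$ and $u$, and the walk from $q$ through $U_2$ to $v$ followed by the edge $vu$ is internally disjoint from $V(G_1)$, so Lemma \ref{coro} makes $\{q,u\}$ weakly globally linked in $G+uv$. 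Lemma \ref{wglvsgl}, applied with $xy:=uv$ (weakly globally linked by assumption), then forces $\{q,u\}$ to be weakly globally linked already in $G$, hence in $G_1$ by Lemma \ref{cliquesum}, hence $qu\in E$ since $G_1$ is totally loose. Thus $u$ is joined to at least three vertices of the clique $S$, so $S\cup\{u\}$ induces a globally rigid subgraph of $G_1$ and $su\in E$ for every $s\in S$; symmetrically $S\cup\{v\}$ is a clique of $G_2$. This contradicts the sum being restricted. The ingredient your plan is missing is exactly Lemma \ref{wglvsgl}: it converts the \emph{hypothetical} weak global linkedness of the cross pair into weak global linkedness of pairs lying inside $G_1$, where the total looseness of $G_1$ does the rest. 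I recommend abandoning the ${\rm Clique}(B,V_0)$ route and adopting this transfer argument.
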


\begin{proof}
To prove the “only if" direction suppose that $G$ is rigid and totally loose. By Theorem \ref{Tlooseconstr} and Lemma \ref{gs}, $G$ can be obtained from $3$-connected SNGR graphs and complete graphs by a sequence of clique sum
operations. (Observe that
taking a clique sum with
a $K_4-e$ is equivalent to taking clique sums with
one or two $K_3$'s.) All these operations must be restricted clique sum
operations for otherwise $G$ would have an induced subgraph isomorphic to the globally rigid graph
$K_{t+2}-e$ with $t\geq 3$, which would be a contradiction since $G$ is totally loose. 

To prove the “if" direction suppose that $G$ can be obtained from $3$-connected SNGR graphs and complete graphs by a sequence of restricted clique sum
operations.
We prove that $G$ is rigid and totally loose by
induction on the number $h$ of clique sum operations. For $h=0$, $G$ is a complete graph or an SNGR graph, and hence 
$G$ is rigid and totally loose.  Suppose that $h\geq 1$.  
Consider the last clique sum operation, which is performed on two graphs $G_1=(V_1,E_1), G_2=(V_2,E_2)$, along a clique $S \subset V$, and which results in graph $G$. By induction $G_1$ and $G_2$ are rigid totally loose graphs. Thus the rigidity of $G$ follows. 

Suppose, for a contradiction, that $\{u,v\}$ is weakly globally linked in $G$ for some $u,v\in V$  with $uv\notin E$. Since $G_1$ and $G_2$ are totally loose, we may suppose that $u\in V_1-S$ and $v\in V_2-S$ by Lemma \ref{cliquesum}. 
Let $U_1$ (resp. $U_2$) be the vertex set of the component of $G-S$ containing $u$ (resp. $v$). If $|N_G(U_1)|\leq 2$ or $|N_G(U_2)|\leq 2$, then $\{u,v\}$ is globally loose in $G$ by Lemma \ref{kappa}, which is a contradiction. So we may suppose that $|N_G(U_i)|\geq 3$ for $i\in\{1,2\}$.
Let $q\in |N_G(U_2)|$.
Since $G_1$ is rigid, $\{q,u\}$ is weakly globally linked in $G+uv$ by Lemma \ref{coro}. Applying Lemma \ref{wglvsgl} with $\{u,v\}:=\{q,u\}$, $xy :=uv$ gives that $\{q,u\}$ is weakly globally linked in $G$ as well. 
By Lemma \ref{cliquesum} $\{q,u\}$ is also weakly globally linked in $G_1$. Since $G_1$ is totally loose, $qu\in E$. 
Hence, $|N_G(U_2)|\geq 3$ implies that for any $s\in S$, $\{s,u\}$ belongs to a globally rigid subgraph of $G_1$, and hence $\{s,u\}$ is globally linked in $G_1$. Thus $su\in E$. Therefore $S$ is not a maximal clique in $G_1$.
A similar argument shows that $S$ is not a maximal clique in $G_2$. Thus the last clique sum operation is not restricted,
a contradiction.
\end{proof}

\subsection{Standard subgraphs of totally loose graphs in $\R^2$}

Let $G=(V,E)$ be a rigid totally loose graph in $\R^2$. An induced subgraph $G[V_0]$  of $G$ (and the set $V_0$) is said to be {\it standard} in $G$ if it is rigid and for all $w\in V-V_0$, $|N_{G}(w)\cap V_0|\leq 2$. 
For example, for every edge $uv\in E$, $\{u,v\}$ is standard in $G$, and a clique $S$ is standard in $G$ if and only if $S$ is a maximal clique or $|V(S)|\leq 2$.   
Furthermore, if $G$ is 3-connected, then 
a clique is a standard clique separator if and only if it is a separating maximal clique in $G$.
It follows from Lemma \ref{glpf}(b) that for standard subgraphs $G_1,G_2$ of $G$ with $V(G_1)\cap V(G_2)\neq \emptyset$, $G_1\cap G_2$ is standard. 
 Suppose that $G_0$ is a standard subgraph of $G$. Then an induced  subgraph $H$ of $G_0$ is standard in $G_0$ if and only if it is standard in $G$.
See Figure  \ref{fig:gen3}.

\begin{figure}[ht]
\centering
\includegraphics[scale=0.9]{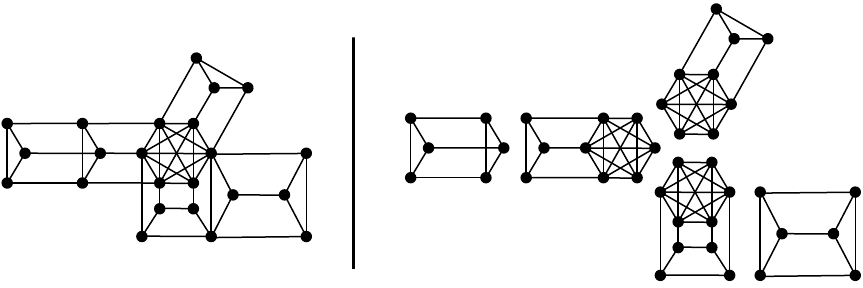}
\caption{The graph $G$ on the left %
is a rigid totally loose graph. On the right %
the  minimal non-complete standard subgraphs of $G$ are depicted. (Overall $G$ has 24 non-complete standard subgraphs. Five of these are depicted here. The remaining 19 can be obtained by taking the union of some of these five subgraphs such that the resulting graph is connected.)}
\label{fig:gen3}
\end{figure}

As we will see in the next subsection, the minimal non-complete standard subgraphs of a rigid totally loose graph can be considered its building blocks. In this subsection we prove that these subgraphs are always SNGR and some other auxiliary statements.

\begin{lemma}
\label{sum of complete graph and sngr graph is sngr}
Let $G=(V,E)$ be a  totally loose graph. 
 Suppose that $G'$ is a 3-connected induced subgraph of $G$ that can be obtained from a complete graph $S$ and an SNGR graph $H$ by a clique sum operation. 
Then $G'$ is an SNGR graph. 
\end{lemma}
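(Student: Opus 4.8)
The plan is to verify the two defining properties of an SNGR graph directly: that $G'$ is not globally rigid, and that $G'+uv$ is globally rigid for every non-adjacent pair $\{u,v\}$. Write $G'$ as the clique sum of $S$ and $H$ along a common clique $K$, so that $V(G')=(V(S)\setminus K)\cup K\cup(V(H)\setminus K)$ with $G'[V(S)]=S$ and $G'[V(H)]=H$ (the operation adds no edge between $V(S)\setminus K$ and $V(H)\setminus K$). Being an induced subgraph of the totally loose graph $G$, the graph $G'$ is itself totally loose, so $\tlc_2(G')=G'$. Since $H$ is SNGR it is not complete, hence $G'$ is not complete, and by Lemma \ref{J}(b) a totally loose graph is globally rigid only if it is complete; thus $G'$ is not globally rigid. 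If the clique sum is trivial, i.e.\ one of $V(S)\setminus K$, $V(H)\setminus K$ is empty, then (as $H$ is not complete) we must have $V(S)=K$ and $G'=H$, which is SNGR by assumption; so I may assume both $V(S)\setminus K$ and $V(H)\setminus K$ are nonempty. Then $K$ separates these two sets, so the $3$-connectivity of $G'$ forces $|K|=t\ge 3$.

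For the saturation property I use Theorem \ref{thm}: $G'+uv$ is globally rigid if and only if it is $3$-connected and ${\cal R}_2$-connected. Three-connectivity is immediate, since $G'$ is $3$-connected and adding an edge preserves this. The content is ${\cal R}_2$-connectivity, and the whole argument rests on one merging principle: if two ${\cal R}_2$-connected subgraphs of a graph share at least one edge, then their union is ${\cal R}_2$-connected, because the shared edge lies in a circuit of each, so all edges fall into a single ${\cal R}_2$-component. The only non-adjacent pairs of $G'$ are those with $u,v\in V(H)$ and those with $u\in V(S)\setminus K$, $v\in V(H)\setminus K$ (all pairs meeting $V(S)\setminus K\cup K$ on both sides are adjacent, as $S$ is complete). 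In the first case $uv\notin E(H)$, so $H+uv$ is globally rigid, hence ${\cal R}_2$-connected; as $G'+uv$ is the clique sum of the complete graph $S$ and $H+uv$ along $K$, and these two share an edge of $K$ (here $t\ge 3$ guarantees $K$ has edges), the merging principle shows $G'+uv$ is ${\cal R}_2$-connected, hence globally rigid.

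The hard case is $u\in V(S)\setminus K$, $v\in V(H)\setminus K$, where the new edge lies in neither $S$ nor $H$ and $H$ need not be ${\cal R}_2$-connected, so the naive merge of $S$ and $H$ fails; I expect this case to be the main obstacle. The key structural step is to show $v$ cannot be adjacent to all of $K$: otherwise, choosing any $w\in V(S)\setminus K$, the set $\{v,w\}\cup K$ would induce $K_{t+2}-vw$ in $G'$ (there is no edge $vw$), which is globally rigid because $t+2\ge 5$, forcing $\{v,w\}$ to be globally (hence weakly globally) linked and contradicting total looseness. So there is $k_0\in K$ with $vk_0\notin E(H)$, whence $H+vk_0$ is globally rigid by the SNGR property. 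I then apply the $1$-extension operation to $H+vk_0$: delete $vk_0$ and add the new vertex $u$ joined to $v$, $k_0$ and one further vertex $z\in K\setminus\{k_0\}$ (which exists as $t\ge 3$). Since the $1$-extension preserves global rigidity in $\R^2$ \cite{JW}, the result is globally rigid, and adding the remaining edges from $u$ to $K\setminus\{k_0,z\}$ keeps it so; this graph is exactly $H^+:=G'[V(H)\cup\{u\}]+uv$, in which $u$ is joined to $K\cup\{v\}$. Thus $H^+$ is ${\cal R}_2$-connected, and since $G'+uv$ is the union of $S$ and $H^+$ sharing the edges of the clique $K\cup\{u\}$, the merging principle again gives that $G'+uv$ is ${\cal R}_2$-connected and therefore globally rigid. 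Combining the two cases, $G'+uv$ is globally rigid for every non-adjacent pair, so $G'$ is SNGR.
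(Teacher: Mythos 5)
Your proof is correct. The skeleton is the same as the paper's: reduce to the two cases ($\{u,v\}\subseteq V(H)$, and the cross case), and in the cross case use total looseness to find a vertex of the separating clique that is non-adjacent to the $H$-side endpoint, so that the SNGR property of $H$ can be invoked. (Your preliminary observations --- that $G'$ is not globally rigid, and that the degenerate clique sum can be discarded --- are left implicit in the paper but are worth stating.) Where you diverge is in how you finish the cross case. The paper, having found $s_0$ with $us_0\notin E$ and hence $G'+us_0$ globally rigid, applies Lemma \ref{coro} to the rigid subgraph $H$ and the path $u$--$v$--$s_0$ to conclude that $\{u,s_0\}$ is weakly globally linked in $G'+uv$, and then Lemma \ref{J} immediately gives that $G'+uv$ is globally rigid. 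You instead build $G'[V(H)\cup\{u\}]+uv$ explicitly from the globally rigid graph $H+vk_0$ by a $1$-extension followed by edge additions, and then glue on $S$ via the ``two ${\cal R}_2$-connected subgraphs sharing an edge'' principle and Theorem \ref{thm}. Both are valid; your route is more constructive and self-contained at the matroid level, but it imports the $1$-extension theorem for global rigidity (Connelly's result, via \cite{JW}), a substantial external tool that the paper's argument avoids entirely --- the paper's toolbox (Lemmas \ref{coro} and \ref{J}) makes the last step a two-line deduction. Your merging principle also silently needs $|V(S)|\geq 4$ so that $S$ is ${\cal R}_2$-connected; this does hold after your reduction to a nontrivial clique sum with $|K|\geq 3$, but it deserves a word.
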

\begin{proof}
We can consider $S$ and $H$ to be the appropriate induced subgraphs of $G$.
Let $S_0=N_{G'}(V(H)-V(S))$. Since $G'$ is 3-connected, $|S_0|\geq 3$. Let $u,v\in V(G')$ with $uv\notin E$. We have to show that $G'+uv$ is globally rigid. 
If $\{u,v\}\subset V(H)$, then $H+uv$ is globally rigid, and hence $G'+uv$ is also globally rigid. 
So we may assume that $u\in V(H)-V(S)$ and $v\in V(S)-V(H)$. Suppose, for a contradiction, that $us\in E$ for all $s\in S_0$. Then $\{u,v\}$ is globally linked in $G'$, and thus also in $G$, 
which contradicts the fact that $G$ is totally loose. Thus there exists some $s_0\in S_0$ with $us_0\notin E$.
Since $H$ is SNGR, $G'+us_0$ is globally rigid. 
By Lemma \ref{coro} and by the rigidity of $H$, $\{u,s_0\}$ is weakly globally linked in $G'+uv$. It follows by Lemma \ref{J} that $G'+uv$ is globally rigid.
\end{proof}

\begin{lemma}\label{3conn SNGR lemma}
Let $G=(V,E)$ be a non-complete rigid totally loose graph in $\R^2$. Then the following are equivalent. \begin{enumerate}[(i)]\vspace{-0.2 cm}
\begin{samepage}
\item $G$ is a 3-connected SNGR graph. \vspace{-0.2 cm}
\item $G$ has no standard clique separator.\vspace{-0.2 cm}
\item $G$ has no non-complete proper standard subgraph.
\end{samepage}
\end{enumerate} 
\end{lemma}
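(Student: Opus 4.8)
The plan is to prove the chain (i) $\Rightarrow$ (ii), (ii) $\Leftrightarrow$ (iii), and (ii) $\Rightarrow$ (i); the first three are short and rely on the clique-sum lemmas together with Lemma \ref{glpf}, while (ii) $\Rightarrow$ (i) carries the real weight and needs the decomposition Theorem \ref{glpf:con} and Lemma \ref{sum of complete graph and sngr graph is sngr}. A preliminary observation used throughout is that (ii) forces $G$ to be $3$-connected: if $\{a,b\}$ were a $2$-separator, then by Lemma \ref{glpf}(a) the neighbourhood of each component of $G-\{a,b\}$ is a clique contained in $\{a,b\}$, so $ab\in E$ and $\{a,b\}$ is a (size-two, hence standard) clique separator, contradicting (ii). In the $3$-connected setting a standard clique separator is exactly a separating maximal clique of size at least $3$.

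For (i) $\Rightarrow$ (ii) I argue by contraposition, exploiting Lemma \ref{cliquesum}. Suppose $S$ is a separating maximal clique, let $C_1,C_2$ be two components of $G-S$, and put $G_i=G[V(C_i)\cup S]$, so $G=G_1\oplus_S G_2$ is a clique sum along $S$. Maximality of $S$ guarantees that no vertex of $C_i$ is adjacent to all of $S$, so both $G_1$ and $G_2$ are non-complete; fix a non-edge $pq$ of $G_2$ and a non-edge $xy$ of $G_1$. Then $G+pq=G_1\oplus_S(G_2+pq)$ is again a clique sum along $S$ with $x,y\in V(G_1)$, so Lemma \ref{cliquesum} gives that $\{x,y\}$ is weakly globally linked in $G+pq$ if and only if it is weakly globally linked in $G_1$. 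Since $G_1$ is an induced subgraph of the totally loose graph $G$ it is itself totally loose, so $\{x,y\}$ is globally loose in $G_1$ and hence in $G+pq$. Thus $G+pq$ is not globally rigid although $pq\notin E(G)$, so $G$ is not SNGR.

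For (ii) $\Leftrightarrow$ (iii) the key is the correspondence between separating maximal cliques and non-complete proper standard subgraphs. For $\neg(\mathrm{iii})\Rightarrow\neg(\mathrm{ii})$, take a non-complete proper standard subgraph $G[V_0]$ and a component $C$ of $G-V_0$; by Lemma \ref{glpf}(a) the set $N_G(V(C))$ is a clique in $V_0$ of size at least $3$. Extend it to a maximal clique $\hat S$ of $G$: every vertex used in the extension is adjacent to all of $N_G(V(C))$, hence to at least three vertices of $V_0$, so by standardness it lies in $V_0$; thus $\hat S\subseteq V_0$, and since $G[V_0]$ is non-complete we get $\hat S\subsetneq V_0$, with $\hat S$ separating $C$ from $V_0\setminus\hat S$. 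So $\hat S$ is a standard clique separator. Conversely, for $\neg(\mathrm{ii})\Rightarrow\neg(\mathrm{iii})$ a separating maximal clique $S$ with component $C_1$ of $G-S$ makes $G[V(C_1)\cup S]$ a non-complete (as $S$ is maximal) proper standard subgraph, because any $w\notin V(C_1)\cup S$ has at most two neighbours in $S$.

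The crux is (ii) $\Rightarrow$ (i). Here I would use Theorem \ref{glpf:con} to write $G$ as a restricted clique sum of $3$-connected SNGR graphs and complete graphs, and prove $G$ is SNGR by induction on the number of summands. A single summand is SNGR (being non-complete). Otherwise pick a gluing clique $S$, a clique separator of size at least $3$; by (ii) it is not maximal, so some vertex is adjacent to all of $S$. The decisive structural input from total looseness is that two complete graphs cannot be glued along a clique of size at least $3$, since that would create an induced $K_{|S|+2}-e$, which is globally rigid. Combining this with the fact that the maximal clique $\hat S\supseteq S$ is non-separating, one shows that exactly two components remain and one side $G[V(C_1)\cup S]=\hat S$ is complete, while $G''=G[V(C_2)\cup S]$ is a strictly smaller non-complete rigid totally loose graph with $G=G''\oplus_S\hat S$; the induction then yields that $G''$ is $3$-connected SNGR, and Lemma \ref{sum of complete graph and sngr graph is sngr} shows the clique sum $G$ is SNGR. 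The main obstacle I anticipate is verifying that $G''$ inherits the induction hypotheses—its $3$-connectivity follows since a $2$-separator of $G''$ would persist in $G$ (a clique $S$ of size $\ge3$ cannot be split by two vertices), but checking that $G''$ has no standard clique separator, i.e. that a separating maximal clique of $G''$ lifts to one of $G$, is delicate precisely when that clique sits inside $S$ and is enlarged by the complete side $\hat S$. (The handful of degenerate graphs, such as $K_4-e$ and cliques sharing a single edge, lie outside this generic picture and are dealt with by direct inspection.)
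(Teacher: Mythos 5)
Your (i) $\Rightarrow$ (ii) and the two directions of (ii) $\Leftrightarrow$ (iii) are essentially the paper's own arguments (clique sum plus Lemma \ref{cliquesum} for the first; extending $N_G(V(C))$ to a maximal clique and using standardness for the second). The problem is (ii) $\Rightarrow$ (i), where you correctly isolate but do not close the decisive step: verifying that $G''=G[V(C_2)\cup S]$ again has no standard clique separator. Without this the induction does not close, so as written there is a gap. It is closeable: if $T$ is a separating maximal clique of $G''$, then $T\not\subseteq S$ (otherwise $T=S$ by maximality inside $G''$, but $G''-S=C_2$ is connected, so $S$ does not separate $G''$); hence no vertex of $\hat S-S$ is adjacent to all of $T$ (such vertices see only $S$ inside $G''$), so $T$ is a maximal clique of $G$, and it still separates $G$ --- $\hat S-S$ either attaches to the component of $G''-T$ containing the clique $S-T$, or, when $S\subseteq T$, forms an extra component. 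Thus $T$ would be a standard clique separator of $G$, a contradiction. You should also induct on $|V|$ rather than on the number of summands in the Theorem \ref{glpf:con} decomposition, since it is not clear that $G''$ admits a decomposition with fewer summands; and note that Theorem \ref{glpf:con} is not really needed --- all you use is that either $G$ has no clique separator (then Lemma \ref{rspeciallemma}(b) and Lemma \ref{gs} finish directly) or it has one, which by (ii) is non-maximal and can be grown to a non-separating maximal clique $\hat S$.

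The paper sidesteps this descent problem entirely: it deletes all simplicial vertices at once, forming $G[V_0]$ with $V_0=\{v:N_G(v)\text{ is not a clique}\}$, shows in a single step that $G[V_0]$ has no clique separator at all (a putative one extends to a standard, hence non-separating, clique $S'$ of $G$, which forces the offending component to consist of simplicial vertices), and then applies Lemma \ref{rspeciallemma}(b), Lemma \ref{gs} and Lemma \ref{sum of complete graph and sngr graph is sngr} exactly as you do. Both routes rest on the same three lemmas; the paper's is shorter because the ``no clique separator'' verification is performed once instead of at every inductive step. One further caveat: your argument for $\neg$(ii) $\Rightarrow\neg$(iii) only treats separating \emph{maximal} cliques. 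If $G$ is not 3-connected, a standard clique separator may be a non-maximal 2-clique and $G[V(C_1)\cup S]$ may be complete (as in $K_4-e$, where (ii) fails but every proper standard subgraph is complete), so deferring this case to ``direct inspection'' does not work as stated --- though the paper's own proof of (iii) $\Rightarrow$ (ii) has the same blind spot.
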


\begin{proof}
(i) $\Rightarrow$ (ii) Suppose that $G$ is a 3-connected SNGR graph, and suppose, for a contradiction, that $S$ is a standard clique separator in $G$. Let $C_1$ and $C_2$ be two different components of $G-S$. By 3-connectivity $|N_G(V(C_i))|\geq 3$ for $i=1,2$. Since $S$ is standard, $G_i=G[V(S)\cup V(C_i)]$ is non-complete; let $\{u_i, v_i\}$ be a non-adjacent vertex pair in $G_i$ for $i=1,2$. By Lemma \ref{cliquesum} $\{u_1,v_1\}$ is globally loose in $G+u_2v_2$, and thus $G+u_2v_2$ is not globally rigid by Lemma \ref{J}, a contradiction.

(ii) $\Rightarrow$ (i) Suppose that there is no standard clique separator in $G$. Since every 2-separator of a totally loose graph is a standard clique separator, $G$ is 3-connected. Let $V_0=\{v\in V:N_G(v) \text{ is not a clique in } G\}$. $G[V_0]$ can be obtained from $G$ by deleting the vertices whose neighbour set induces a clique. Hence, $G[V_0]$ is also a non-complete 3-connected rigid totally loose graph. 
Suppose, for a contradiction, that there exists a clique separator $S$ in $G[V_0]$. Then $S$ separates $G$ as well, and thus $S$ is not a standard clique in $G$. Let $S'$ be a standard clique in $G$ that contains $S$. 
Let $C_1$ denote the component of $G-S$ that contains $S'-S$ and let $C_2,\dots, C_k$ denote the remaining components of $G-S$. 
Since $S'$ is standard, it is not a clique separator in $G$. It follows that $k=2$ and $C_1=S'-S$.
Hence, $S$ is not a clique separator of $G[V_0]$, a contradiction. Since $G[V_0]$ has no clique separator, $G[V_0]$ is $2$-special by Lemma \ref{rspeciallemma}(b), and thus SNGR by Lemma \ref{gs}. As $G$ can be obtained from $G[V_0]$ by adding cliques to it, $G$ is SNGR by Lemma \ref{sum of complete graph and sngr graph is sngr}.

(ii) $\Rightarrow$ (iii)  Suppose that $G_0$ is a non-complete proper standard subgraph of $G$. Let $C$ be one of the components of $G-G_0$. Let $S=N_G(V(C))$. By Lemma \ref{glpf}(a) $S$ induces a clique in $G$. By 3-connectivity we have $|S|\geq 3$. Let $S'$ be a maximal clique of $G$ that contains $S$.  Since $G_0$ is standard, $S'\subseteq G_0$. $G_0$ is non-complete; hence, $S'$ is a standard clique separator of $G$. 

(iii) $\Rightarrow$ (ii) Suppose that $S$ is a standard clique separator of $G$. Let $C$ be a component of $G-V(S)$. Then $G[V(C)\cup V(S)]$ is a proper standard subgraph of $G$.
\end{proof}

\begin{lemma}\label{max 3-conn sngr}
Let $G=(V,E)$ be a rigid totally loose graph, and let $G_0$ be a subgraph of $G$. Then the following are equivalent.
\begin{enumerate}[(i)]\vspace{-0.2 cm}
\item $G_0$ is a maximal induced 3-connected SNGR subgraph of $G$. \vspace{-0.2 cm}
\item $G_0$ is a standard 3-connected SNGR subgraph of $G$. \vspace{-0.2 cm}
\item $G_0$ is a minimal non-complete standard subgraph of $G$.
\end{enumerate}
\end{lemma}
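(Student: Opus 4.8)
The plan is to prove the three-way equivalence by leveraging Lemma~\ref{3conn SNGR lemma}, which already characterizes when the whole graph $G$ is a $3$-connected SNGR graph in terms of the absence of standard clique separators and non-complete proper standard subgraphs. The strategy is to apply that lemma internally to the standard subgraph $G_0$. The key enabling observation, recorded in the preamble of this subsection, is that \emph{standardness is transitive}: if $G_0$ is standard in $G$, then an induced subgraph $H$ of $G_0$ is standard in $G_0$ if and only if it is standard in $G$. This lets us transfer properties between the ``ambient'' graph $G$ and the ``local'' graph $G_0$ without friction.

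First I would establish $(ii)\Rightarrow(iii)$. Assume $G_0$ is a standard $3$-connected SNGR subgraph. It is non-complete since it is SNGR (an SNGR graph is not globally rigid, whereas complete graphs are). For minimality, suppose $H \subsetneq G_0$ were a smaller non-complete standard subgraph of $G$; by transitivity $H$ is also a non-complete proper standard subgraph of $G_0$, and applying the implication $(i)\Rightarrow(iii)$ of Lemma~\ref{3conn SNGR lemma} to the graph $G_0$ (which is itself a non-complete rigid totally loose graph, being an induced subgraph of the totally loose $G$, and rigid by standardness) yields a contradiction. Next, for $(iii)\Rightarrow(ii)$, assume $G_0$ is a minimal non-complete standard subgraph. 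It is rigid and non-complete; I claim it has no non-complete proper standard subgraph, for such a subgraph would be standard in $G$ (by transitivity) and contradict minimality. Hence by the equivalence $(iii)\Leftrightarrow(i)$ of Lemma~\ref{3conn SNGR lemma} applied to $G_0$, the graph $G_0$ is a $3$-connected SNGR graph, and it is standard by hypothesis, giving $(ii)$.

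The remaining task is to tie in condition $(i)$, maximality as an induced $3$-connected SNGR subgraph. For $(ii)\Rightarrow(i)$, I would show a standard $3$-connected SNGR subgraph $G_0$ is maximal: if $G_0 \subsetneq G_1$ with $G_1$ induced $3$-connected SNGR, then since $G_0$ is standard in $G$ and $G_0 \subseteq V(G_1) \subseteq V$, every vertex $w \in V(G_1) - V(G_0)$ satisfies $|N_G(w) \cap V(G_0)| \le 2$, so in particular $|N_{G_1}(w) \cap V(G_0)| \le 2$; this makes $G_0$ a non-complete proper standard subgraph of $G_1$, contradicting that $G_1$ has none by Lemma~\ref{3conn SNGR lemma}$(i)\Rightarrow(iii)$. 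For $(i)\Rightarrow(ii)$, given a maximal induced $3$-connected SNGR subgraph $G_0$, I must verify it is standard in $G$, i.e.\ that every $w \in V - V(G_0)$ has $|N_G(w) \cap V(G_0)| \le 2$.

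The main obstacle is precisely this last step, $(i)\Rightarrow(ii)$: ruling out a vertex $w$ outside a maximal SNGR subgraph with three or more neighbours inside it. The natural approach is to argue that if $|N_G(w)\cap V(G_0)|\ge 3$, then $G_0' = G[V(G_0)\cup\{w\}]$ is itself $3$-connected (adding a vertex of degree $\ge 3$ to a $3$-connected graph preserves $3$-connectivity) and can be realized as a clique sum of an SNGR graph with a complete graph, so that $G_0'$ is SNGR by Lemma~\ref{sum of complete graph and sngr graph is sngr}; this would contradict the maximality of $G_0$. The delicate point is justifying the clique-sum decomposition of $G_0'$: by Lemma~\ref{glpf}(a) the neighbourhood $N_G(w)\cap V(G_0)$ induces a clique in the totally loose graph $G$, so $w$ together with this clique neighbourhood attaches along a complete separator, exhibiting $G_0'$ as a restricted clique sum of $G_0$ (an SNGR graph) with a complete graph. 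Verifying that Lemma~\ref{sum of complete graph and sngr graph is sngr} applies here---in particular that $G_0'$ is $3$-connected and that the clique sum structure matches its hypotheses---is where the care is needed.
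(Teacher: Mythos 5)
Your proof is correct and follows essentially the same route as the paper: the equivalence of (ii) and (iii) via Lemma~\ref{3conn SNGR lemma} together with the transitivity of standardness, and the link to (i) via Lemma~\ref{sum of complete graph and sngr graph is sngr} (for maximality forcing standardness) and the fact that a 3-connected SNGR graph has no non-complete proper standard subgraph (for standardness forcing maximality). Your fleshed-out argument for $(i)\Rightarrow(ii)$ — that $N_G(w)\cap V(G_0)$ is a clique, so $G[V(G_0)\cup\{w\}]$ is a 3-connected clique sum of $G_0$ with a complete graph and hence SNGR — is exactly the content the paper compresses into its citation of Lemma~\ref{sum of complete graph and sngr graph is sngr}.
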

\begin{proof}

(i) $\Rightarrow$ (ii) follows from Lemma \ref{sum of complete graph and sngr graph is sngr}.

(ii) $\Rightarrow $ (iii) follows from Lemma \ref{3conn SNGR lemma}.

(iii) $\Rightarrow$ (i) 
Let $G_0$ be a minimal non-complete standard subgraph of $G$. It follows from Lemma \ref{3conn SNGR lemma} that $G_0$ is a 3-connected SNGR graph.  Suppose that $G_0'$ is a maximal 3-connected SNGR subgraph of $G$ with $G_0\subseteq G_0'$. Then $G_0$ is standard in $G_0'$, but $G_0'$ has no proper standard subgraph  by Lemma \ref{3conn SNGR lemma}. Hence, $G_0=G_0'$.%
\end{proof}

We continue the analysis with two further lemmas that we shall use in the next subsection.
 Let $G=(V,E)$ be a rigid totally loose graph.
Let \begin{align*}
{\cal H}_G&=\{H:H \text{ is a standard 3-connected SNGR subgraph of } G\}.
\end{align*}

\begin{lemma}\label{tloose subgraphs ind}
Let $G=(V,E)$ be a 3-connected rigid totally loose graph. Suppose that $S$ is a standard clique separator of $G$, and $G$ is the clique sum of $G'$ and $G''$ along $S$. Then ${\cal H}_{G}={\cal H}_{G'}\cup {\cal H}_{G''}.$
\end{lemma}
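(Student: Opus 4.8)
The plan is to record first that $\mathcal{H}_{G'}$ and $\mathcal{H}_{G''}$ are well defined and then prove the two inclusions. Write $V=V'\cup V''$ with $V(G')=V'$, $V(G'')=V''$, $V'\cap V''=S$, and no edges between $V'-S$ and $V''-S$. Since $G'=G[V']$ and $G''=G[V'']$ are induced subgraphs of the totally loose graph $G$, they are totally loose, and since $S$ is a clique separator of the rigid graph $G$, both $G'$ and $G''$ are rigid (this is exactly the situation exploited in the proof of Theorem \ref{Tlooseconstr}). Thus $\mathcal{H}_{G'}$ and $\mathcal{H}_{G''}$ are defined, and I will freely use that being $3$-connected and SNGR are intrinsic properties of a subgraph, so only \emph{standardness} has to be tracked when moving a subgraph between $G$, $G'$ and $G''$.

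For the inclusion $\mathcal{H}_{G'}\cup\mathcal{H}_{G''}\subseteq\mathcal{H}_G$, I take $H\in\mathcal{H}_{G'}$ (the other case is symmetric) and check that $H$ is standard in $G$, i.e. $|N_G(w)\cap V(H)|\le 2$ for every $w\in V-V(H)$. If $w\in V'$, then $N_G(w)\cap V(H)=N_{G'}(w)\cap V(H)$ because $G'$ is induced, and this set has at most two elements since $H$ is standard in $G'$. If $w\in V''-S$, then all neighbours of $w$ lying in $V'$ belong to $S$, so $N_G(w)\cap V(H)\subseteq N_G(w)\cap S$, which has size at most $2$ because $S$ is standard in $G$. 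Hence $H\in\mathcal{H}_G$.

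The substantive direction is $\mathcal{H}_G\subseteq\mathcal{H}_{G'}\cup\mathcal{H}_{G''}$, and its core is the claim that any $H\in\mathcal{H}_G$ satisfies $V(H)\subseteq V'$ or $V(H)\subseteq V''$. To prove this I would argue by contradiction: if $V(H)$ meets both $V'-S$ and $V''-S$, choose $a\in V(H)\cap(V'-S)$ and $b\in V(H)\cap(V''-S)$ and set $T=S\cap V(H)$. Because $S$ separates $V'-S$ from $V''-S$ in $G$, every path from $a$ to $b$ in $H$ passes through $T$, so $T$ separates $a$ and $b$ in $H$; since $H$ is $3$-connected, Menger's theorem forces $|T|\ge 3$. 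Moreover $T$ is a clique (a subset of $S$), and for every $w\in V(H)-T$ we have $w\notin S$, whence $|N_H(w)\cap T|\le|N_G(w)\cap S|\le 2$ by the standardness of $S$ in $G$; thus $T$ is a standard clique separator of $H$. But $H$, being $3$-connected and SNGR, is a non-complete rigid totally loose graph, and so has no standard clique separator by Lemma \ref{3conn SNGR lemma}, a contradiction. This proves the claim; assuming $V(H)\subseteq V'$, the subgraph $H$ is an induced subgraph of $G'$ that is still $3$-connected and SNGR, and it is standard in $G'$ since $N_{G'}(w)\cap V(H)=N_G(w)\cap V(H)$ has size at most $2$ for all $w\in V'-V(H)$; hence $H\in\mathcal{H}_{G'}$. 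I expect this claim to be the main obstacle, the delicate point being to recognize $S\cap V(H)$ as a \emph{standard} clique separator of $H$: its size is controlled by the $3$-connectivity of $H$ through Menger, while the neighbourhood bound defining standardness is inherited from the standardness of $S$ in the ambient graph $G$.
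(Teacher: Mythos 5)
Your proof is correct and follows essentially the same route as the paper's: the key step in both is that a standard $3$-connected SNGR subgraph $H$ of $G$ straddling $S$ would have $S\cap V(H)$ as a standard clique separator, contradicting Lemma \ref{3conn SNGR lemma}. The only difference is bookkeeping — you verify the standardness conditions (for $H$ in $G$ versus $G'$, and for $S\cap V(H)$ in $H$) by hand from the neighbourhood definition, where the paper instead invokes the general facts that standardness is hereditary through standard subgraphs and that intersections of standard subgraphs are standard.
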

\begin{proof}
 To prove that ${\cal H}_{G}= {\cal H}_{G'}\cup {\cal H}_{G''}$ holds, consider the condition of Lemma \ref{max 3-conn sngr}(iii). If $G_0$ is a minimal non-complete standard subgraph of $G'$ (or $G''$), then it is also minimal in $G$. Thus we have ${\cal H}_{G}\supseteq {\cal H}_{G'}\cup {\cal H}_{G''}$. Let $G_0$ be a minimal non-complete standard subgraph (or, equivalently, a standard 3-connected SNGR subgraph) of $G$.  Suppose, for a contradiction, that $G_0\not\subseteq G', G_0\not \subseteq G''$. $S_0=G_0\cap S$ is the intersection of two standard subgraphs of $G$; hence, $S_0$ is standard in $G$, and also in $G_0$.
Thus $S_0$ is a standard clique separator of $G_0$, which is a contradiction by Lemma \ref{3conn SNGR lemma}. Thus either $G_0\subseteq G'$ and $G_0\in  {\cal H}_{G'}$, or $G_0\subseteq G''$ and $G_0\in {\cal H}_{G''}$. Hence, ${\cal H}_{G}\subseteq {\cal H}_{G'}\cup {\cal H}_{G''}$.
\end{proof}

\begin{lemma}
\label{separ in tloose subgraph}
Let $G=(V,E)$ be a  totally loose graph and let $H$ be a rigid induced subgraph of $G$ with $|V(H)|\geq 4$. 
\begin{enumerate}[(a)]
\item 
Suppose that $X$ is a separating set of $H$. Then $X$ is a separating set of $G$.
\item Suppose that $G$ is 3-connected.
Then $H$ is 3-connected.

\item Suppose that $G$ is 3-connected, and $S_1$ and $S_2$ are %
cliques in $G$ with $|V(S_1)\cap V(S_2)|\geq 2$. Then $V(S_1)\cup V(S_2)$ also induces a clique in $G$.

\end{enumerate}
\end{lemma}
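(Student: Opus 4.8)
The plan is to prove the three statements of Lemma \ref{separ in tloose subgraph} in order, relying on Lemma \ref{glpf}(a) (the neighbourhood-is-a-clique property) as the main tool throughout. The recurring idea is that in a totally loose graph, whenever a rigid induced subgraph has a ``small'' attachment to the rest of the graph, that attachment must be a clique, and this rigidity forces enough internal connectivity.

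\begin{proof}
(a) Suppose $X$ is a separating set of $H$, so $H-X$ has components $C_1,\dots,C_m$ with $m\geq 2$. I claim $X$ separates $G$ as well. Suppose not; then some $C_i$ and $C_j$ are joined in $G-X$ by a path $P$ whose internal vertices lie in $V-V(H)$. Let $Q$ be the component of $G-V(H)$ containing the internal vertices of the first such detour; then $N_G(V(Q))$ meets at least two different components $C_i,C_j$ of $H-X$. But $H$ is a rigid induced subgraph of the totally loose graph $G$, so by Lemma \ref{glpf}(a) the set $N_G(V(Q))$ induces a clique in $G$, hence lies within a single component of $H-X$ together with $X$ --- a contradiction. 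Thus $X$ separates $G$.

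(b) This is the contrapositive of (a) together with $|V(H)|\geq 4$. If $H$ were not 3-connected, it would have a separating set $X$ with $|X|\leq 2$ (here I use $|V(H)|\geq 4$ so that ``separating set of size $\leq 2$'' is the right notion of failing 3-connectivity). By part (a), $X$ would then separate $G$, contradicting the 3-connectivity of $G$. Hence $H$ is 3-connected.

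(c) Let $A=V(S_1)\cap V(S_2)$, so $|A|\geq 2$, and suppose for contradiction that $V(S_1)\cup V(S_2)$ does not induce a clique; then there exist $x\in V(S_1)-V(S_2)$ and $y\in V(S_2)-V(S_1)$ with $xy\notin E$. Since $S_1$ and $S_2$ are cliques sharing the set $A$ of size at least two, the pair $\{x,y\}$ has at least two internally disjoint $xy$-paths through $A$ (one through each of two vertices of $A$), and using the 3-connectivity of $G$ I can argue $\kappa_G(x,y)\geq 3$ and that $G[V(S_1)\cup V(S_2)]$ is a $(x,y)$-rigid subgraph, so that $\{x,y\}$ is linked. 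The strategy is then to exhibit a $(x,y)$-rigid subgraph of $G$ together with an $xy$-path internally disjoint from it and invoke Lemma \ref{coro} to conclude that $\{x,y\}$ is weakly globally linked, contradicting that $G$ is totally loose.

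The main obstacle I anticipate is part (c): making the application of Lemma \ref{coro} rigorous. A clique on $\{x,y\}\cup A$ with $|A|\geq 2$ is rigid in $\R^2$ and contains both $x$ and $y$, so it furnishes the required $(x,y)$-rigid subgraph; the delicate point is producing an $xy$-path that is \emph{internally disjoint} from this rigid set while genuinely going ``around'' it, which is where 3-connectivity must be used carefully. If a direct path is not available, the cleaner route may be to build a slightly larger $(x,y)$-rigid subgraph (absorbing one vertex of $A$ together with its clique neighbours) and apply Lemma \ref{coro} to the remaining structure, or alternatively to note that $\kappa_G(x,y)\geq 3$ forces a third internally disjoint path avoiding the two chosen vertices of $A$, which then serves as the required internally disjoint $xy$-path.
\end{proof}
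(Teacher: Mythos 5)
Your parts (a) and (b) are correct and essentially the paper's argument: the paper takes the shortest path between two components of $H-X$ and applies Lemma \ref{coro} directly to its endpoints, while you route the same idea through Lemma \ref{glpf}(a) (which is itself a corollary of Lemma \ref{coro}); these are interchangeable.

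Part (c) is where you genuinely diverge, and your route works. The paper applies part (b) to the rigid induced subgraph $G[V(S_1)\cup V(S_2)]$: its $3$-connectivity forces a cross edge between $V(S_1)-V(S_2)$ and $V(S_2)-V(S_1)$ when $|V(S_1)\cap V(S_2)|=2$, after which the union is $3$-connected and ${\cal R}_2$-connected, hence globally rigid, hence complete because $G$ is totally loose. You instead attack an arbitrary non-adjacent cross pair $\{x,y\}$ directly with Lemma \ref{coro}. The step you flag as the ``main obstacle'' is in fact immediate and you already name the correct resolution: fix a two-element subset $A'\subseteq V(S_1)\cap V(S_2)$; then $G[A'\cup\{x,y\}]$ contains $K_4-e$ and is $(x,y)$-rigid, and since $G$ is $3$-connected and $xy\notin E$, Menger's theorem says the two vertices of $A'$ do not separate $x$ from $y$, so there is an $xy$-path in $G-A'$, which is automatically internally disjoint from $A'\cup\{x,y\}$. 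Lemma \ref{coro} then makes $\{x,y\}$ weakly globally linked, contradicting total looseness. (Two small slips to fix in the write-up: $\{x,y\}\cup A$ is not a clique, since $xy\notin E$ --- it contains $K_{|A|+2}-e$, which is still rigid, which is all you need; and the digression about $G[V(S_1)\cup V(S_2)]$ being $(x,y)$-rigid and $\{x,y\}$ being linked is not needed for your argument.) Your version is arguably more self-contained, as it avoids invoking Theorem \ref{thm} and the ${\cal R}_2$-connectivity bookkeeping; the paper's version has the advantage of exhibiting the union as a globally rigid, hence complete, induced subgraph in one stroke.
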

\begin{proof}
(a) Suppose, for a contradiction, that $X$ does not separate $G$. Then there is path $P$ in $G-X$ that connects the vertex sets of two different components of $H-X$. By taking a subpath, we may suppose that $P$ is internally disjoint from $H$. By Lemma \ref{coro}, the first and last vertices of $P$ are weakly globally linked in $G$. Since $G$ is totally loose, these two vertices are also connected in $G$, which is a contradiction.

(b) follows from (a).

 (c) If $|V(S_1)\cap V(S_2)|\geq 3$, then $G[V(S_1)\cup V(S_2)]$ is globally rigid, and thus complete.
 Suppose that $|V(S_1)\cap V(S_2)|= 2$. By (b) there are vertices $u\in V(S_1)-V(S_2), v\in V(S_2)-V(S_1)$ with $uv\in E$. Thus $G[V(S_1)\cup V(S_2)]$ is globally rigid again, and thus complete.
\end{proof}

\subsection{The tree structure of rigid totally loose graphs  in $\R^2$}\label{sec:treerepdef}

In this subsection we conclude our analysis regarding the structure of rigid totally loose graphs. 
We  shall prove that they possess a tree-like structure, in which the building blocks are the standards
3-connected SNGR subgraphs and the standard complete subgraphs.
Let $G$ be a 2-connected graph.
 A maximal 2-connected subgraph of $G$ with no 2-separators is called a {\it 3-block} of $G$. %
  Note that in a rigid totally loose graph for every 2-separator $(a,b)$, we have $ab\in E$  by Lemma \ref{pair}.

\begin{lemma}
\label{lem:triangleor3conn}
Let $G=(V,E)$ be a rigid totally loose graph with $|V|\geq 3$. Then $G$ is the clique sum of its 3-blocks. Each 3-block of $G$ is a triangle or a rigid 3-connected graph. Each 3-block is standard in $G$.
\end{lemma}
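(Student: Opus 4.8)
The statement has three assertions that I would establish in sequence: (1) $G$ is the clique sum of its 3-blocks; (2) each 3-block is a triangle or a rigid 3-connected graph; (3) each 3-block is standard in $G$. For (1), this is essentially the standard SPQR/3-block decomposition theory of 2-connected graphs, combined with the crucial observation already recorded before the lemma, namely that in a rigid totally loose graph every 2-separator $(a,b)$ satisfies $ab\in E$ (by Lemma~\ref{pair}, since $(a,b)$ is linked in the rigid graph $G$ and hence weakly globally linked, forcing $ab\in E$ by total looseness). This means all the 2-separators along which $G$ decomposes are actual edges, so the decomposition into 3-blocks is genuinely a sequence of $2$-clique sums (gluing along edges), which is what "clique sum of its 3-blocks" asserts. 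I would first argue $G$ is 2-connected: total looseness plus rigidity rules out cut vertices (a cut vertex would give two rigid pieces sharing a single vertex, and by Lemma~\ref{glpf}(a) applied across the components one gets a clique separator of size one, contradicting rigidity on $|V|\geq 3$ or forcing $K_3$), so the 3-block structure is well-defined.

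For (2), I would take an arbitrary 3-block $B$. By definition $B$ is a maximal 2-connected subgraph with no 2-separator, so $B$ is either 3-connected or has at most $3$ vertices. If $B$ is 3-connected I must show it is rigid: here I would use Lemma~\ref{separ in tloose subgraph}(a), or rather its contrapositive flavor, together with the fact that $B$ is obtained from $G$ by a sequence of cleaving operations along edges $(a,b)$ (each a $2$-separator, hence an edge by the remark above), and cleaving a rigid graph along an edge preserves rigidity of each piece. More directly, since $G$ is rigid and the clique sum of its 3-blocks along edges, and a $2$-clique sum along an edge of rigid graphs is rigid iff both pieces are rigid, induction on the number of 3-blocks shows every 3-block is rigid. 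If $B$ is not 3-connected, then $|V(B)|\leq 3$; since $B$ is 2-connected it has at least $3$ vertices and being a block of a graph where all 2-separators are edges, $B$ must be a triangle $K_3$ (the only 2-connected graph on $3$ vertices).

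For (3), that each 3-block $B$ is standard in $G$, I must verify that $B$ is rigid (done in step (2)) and that every vertex $w\in V-V(B)$ has $|N_G(w)\cap V(B)|\leq 2$. This is where the clique-sum-along-edges structure pays off: any vertex $w$ outside $B$ lies in some other part of the decomposition, separated from the interior of $B$ by one of the $2$-separators $(a,b)$ (an edge) used to attach $B$. By the 3-block decomposition, the only vertices of $B$ that $w$ can be adjacent to are the two attachment vertices $a,b$ of the separating pair through which $w$'s component connects to $B$, giving $|N_G(w)\cap V(B)|\leq 2$. I would make this precise by noting that $w$ lies in a connected component $C$ of $G-\{a,b\}$ disjoint from $V(B)\setminus\{a,b\}$, so all edges from $C$ (and in particular from $w$) into $B$ must pass through $\{a,b\}$.

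\textbf{Main obstacle.} The technical heart is step (3), controlling the attachment of outside vertices to a single 3-block. The subtlety is that $w$ might connect to $B$ through several different 2-separators, and one must rule out a vertex of $G$ outside $B$ being adjacent to three or more vertices of $B$ simultaneously. I expect to resolve this by invoking the maximality of $B$ as a 2-connected $3$-block-free subgraph: if $w$ had three neighbors in $B$, then $B\cup\{w\}$ would be $3$-connected (or at least have no new 2-separator separating $w$ off), contradicting the maximality of $B$ in the 3-block decomposition. Alternatively, one can use Lemma~\ref{separ in tloose subgraph}(a): the attachment vertices form a separating set of $B$, and total looseness forces this set to be small and to induce the only gateway, so the standardness bound $|N_G(w)\cap V(B)|\leq 2$ follows. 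Making this gateway argument airtight, rather than the rigidity bookkeeping in steps (1)--(2), is the part I would allocate the most care to.
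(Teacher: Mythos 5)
Your proposal is correct and follows essentially the same route as the paper: both hinge on the observation that every 2-separator of a rigid totally loose graph is an edge (Lemma \ref{pair} plus total looseness), so that the 3-block decomposition is a sequence of 2-clique sums of rigid, totally loose pieces, with rigidity of each piece coming from the rank count for a 2-clique sum along an edge. The only notable difference is bookkeeping: the paper runs a single induction on $|V|$, splitting along one 2-separator at a time, which yields standardness for free (a vertex outside a piece $G_i$ meets it only in the two separator vertices), whereas you prove standardness separately via maximality of the 3-block --- an argument that is also sound, since a vertex $w\notin V(B)$ with three neighbours in $B$ would make $G[V(B)\cup\{w\}]$ a strictly larger 2-connected subgraph without 2-separators.
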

\begin{proof}
The proof is by induction on $|V|$. If $G$ is 3-connected, then the statement is obvious. If not, then there is a 2-separator $(a,b)$ in $G$. Let $V_1, \dots, V_k$ denote the vertex sets of the components of $G-\{a,b\}$. Let $G_i=G[V_i\cup \{a,b\}]$ for $1\leq i\leq k$. The set of the 3-blocks of $G$ is the union of the sets of the 3-blocks of $G_i$, $1\leq i \leq k$. Since $ab\in E$ by Lemma \ref{pair}, $G$ is the clique sum of $G_1, \dots, G_k$ along $(a,b)$, and the lemma follows by induction by noting that
each $G_i$ is rigid and totally loose.
\end{proof}

\begin{lemma}\label{unique min standard}
Let $G=(V,E)$ be a rigid totally loose graph with $|V|\geq 3$. Suppose that $Q=(a,b)$ is a 2-separator of $G$ and $G_0$ is a 3-block of $G$ with $Q\subseteq G_0$. %
Let $\mathcal{X}$ denote the set of the standard subgraphs of $G_0$ that contain $Q$ and have at least three vertices. Then $\mathcal{X}$ has a unique minimal element $X$. 
$X$ is either a standard clique or a standard 3-connected induced SNGR subgraph of $G$.
\end{lemma}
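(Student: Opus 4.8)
The plan is to carry out the entire argument inside $G_0$. By Lemma \ref{lem:triangleor3conn}, $G_0$ is standard in $G$ and is either a triangle or a rigid $3$-connected graph; in particular $G_0$ is itself a rigid totally loose graph, and $ab\in E$ by Lemma \ref{pair}. Since standardness is transitive through $G_0$, the members of $\mathcal{X}$ are precisely the subgraphs that are standard in $G$, lie in $G_0$, contain $Q$, and have at least three vertices, and $\mathcal{X}\neq\emptyset$ since $G_0\in\mathcal{X}$. The whole existence-and-uniqueness statement will follow once I show that $\mathcal{X}$ is closed under intersection: as $G_0$ has only finitely many induced subgraphs, $X:=\bigcap_{H\in\mathcal{X}}H$ then lies in $\mathcal{X}$ and is contained in every member, so it is the unique minimal element.

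For the closure I would take $H_1,H_2\in\mathcal{X}$. Their intersection $G_0[V(H_1)\cap V(H_2)]$ is standard, since the standard subgraphs are closed under nonempty intersection (a consequence of Lemma \ref{glpf}(b)), and it contains $Q$; the only thing to check is that it has at least three vertices, and I expect this to be the main obstacle. Suppose, for contradiction, that $V(H_1)\cap V(H_2)=\{a,b\}$. The graph $G_0-\{a,b\}$ is connected (a single vertex when $G_0$ is a triangle, and connected since $G_0$ is $3$-connected otherwise), while $V(H_1)-\{a,b\}$ and $V(H_2)-\{a,b\}$ are nonempty and disjoint, so a suitable subpath of a path between them gives vertices $u'\in V(H_1)-\{a,b\}$ and $v'\in V(H_2)-\{a,b\}$ together with a $u'v'$-path $P'$ that is internally disjoint from $V(H_1)\cup V(H_2)$. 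Now $H_2$ is $(v',a)$-rigid, and concatenating $P'$ with a $u'a$-path inside $H_1$ that avoids $b$ (such a path exists because $H_1$, being rigid on at least three vertices, is $2$-connected) produces a $v'a$-path internally disjoint from $V(H_2)$; hence Lemma \ref{coro} and total looseness give $v'a\in E$, and symmetrically $v'b\in E$. Standardness of $H_1$ then forces $N_{G_0}(v')\cap V(H_1)=\{a,b\}$, so $G_0[V(H_1)\cup\{v'\}]$ is $H_1$ with a degree-$2$ vertex attached and is therefore rigid in $\R^2$. Since $P'$ is a $u'v'$-path internally disjoint from $V(H_1)\cup\{v'\}$, Lemma \ref{coro} shows that $\{u',v'\}$ is weakly globally linked, whence $u'v'\in E$ by total looseness; but then $u'$ is a third neighbour of $v'$ in $V(H_1)$, contradicting the standardness of $H_1$. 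This establishes $|V(H_1)\cap V(H_2)|\geq 3$, hence the closure, and with it the first assertion.

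Finally I would identify $X$. It is standard in $G$, rigid, totally loose, and has at least three vertices. If $X$ is complete it is a standard clique, because a standard clique on at least three vertices is a maximal clique. If $X$ is not complete, it suffices, by Lemma \ref{3conn SNGR lemma}, to show that $X$ has no standard clique separator; this will give that $X$ is a $3$-connected SNGR graph. So suppose $S$ is a standard clique separator of $X$, and let $C$ be a component of $X-S$. Then $X[V(C)\cup S]$ is rigid, as $S$ is a clique separator of the rigid graph $X$, and it is standard in $X$: a vertex outside it lies in another component of $X-S$, so its neighbours inside lie in $S$, and there are at most two of them because $S$ is a standard clique. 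As $ab\in E$ keeps $a$ and $b$ in a common part of this kind, some $X[V(C)\cup S]$ contains $Q$; it has at least three vertices and is a proper standard subgraph of $X$, hence a member of $\mathcal{X}$ strictly smaller than $X$, contradicting the minimality of $X$. Therefore $X$ has no standard clique separator and is a $3$-connected SNGR subgraph of $G$, completing the identification.
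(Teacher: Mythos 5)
Your proof is correct and follows essentially the same route as the paper's: reduce to the case where two members of $\mathcal{X}$ meet exactly in $\{a,b\}$, use Lemma \ref{coro} and total looseness to manufacture a vertex outside one of them with three neighbours inside it (contradicting standardness), and then identify the minimal element via the absence of standard clique separators and Lemma \ref{3conn SNGR lemma}. The only cosmetic differences are that the paper obtains the crossing edge directly from the $3$-connectivity of $G_0[V(X_1)\cup V(X_2)]$ (Lemma \ref{separ in tloose subgraph}(b)) where you use connectivity of $G_0-\{a,b\}$ plus one extra application of Lemma \ref{coro}, you phrase uniqueness as closure of $\mathcal{X}$ under intersection rather than comparing two minimal elements, and you spell out the clique-separator step that the paper leaves implicit.
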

\begin{proof}
Suppose, for a contradiction, that $X_1$ and $X_2$ are two different minimal elements of $\mathcal{X}$. $X_1\cap X_2$ is a standard subgraph of $G_0$ that contains $Q$, and thus it follows from the minimality of $X_1$ that $X_1\cap X_2=Q$. By Lemma \ref{separ in tloose subgraph}(b) $G[V(X_1)\cup V(X_2)]$ is 3-connected. Thus there is some $x\in V(X_1)-\{a,b\}$, $y\in V(X_2)-\{a,b\}$ such that $xy\in E$. Since $X_2$ is rigid, $\{a,y\}$ and $\{b,y\}$ are weakly globally linked in $G$ by Lemma \ref{coro}. As $G$ is totally loose, $ay,by\in E$. Thus the existence of $y$ contradicts the fact that $X_1$ is standard. Therefore, $X$ is indeed unique. Since $X$ is standard in $G_0$ and $G_0$ is standard in $G$, $X$ is also standard in $G$.
By the minimality of $X$, it has no standard clique separator. Hence, if $X$ is not a clique, then it is a 3-connected SNGR graph by Lemma \ref{3conn SNGR lemma}.
\end{proof}

\begingroup
\allowdisplaybreaks
Let $G=(V,E)$ be a rigid totally loose graph. Let \begin{align*}
 {\cal S}_G&=\{S:S \text{ is a standard clique of } G \text{ with } |V(S)|\geq 3\},\\
 {\cal Q}_G&=\{Q:Q \text{ is a 2-separator of } G\}.
\end{align*}

 We define the {\it tree representation} $T_G$ of $G$ as the graph $T_G=({\cal H}_G\cup {\cal S}_G\cup {\cal Q}_G, F)$, where there are two kinds of edges in $F$:
 \begin{itemize}
 \item For $H\in {\cal H}_G, S\in {\cal S}_G$ we have $SH\in F$ if $S\subseteq H$.
 \item  For every $Q\in {\cal Q}_G$ and 3-block $G_0$ of $G$ with $Q\subseteq G_0$, we connect $Q$ with the vertex that represents the minimal standard subgraph $X$ of $G_0$ that contains $Q$ and has at least three vertices. This subgraph is unique by Lemma \ref{unique min standard}. See Figure \ref{fig:gen4}. 
 \end{itemize}

\begin{figure}[t]
\centering
\includegraphics[scale=0.9]{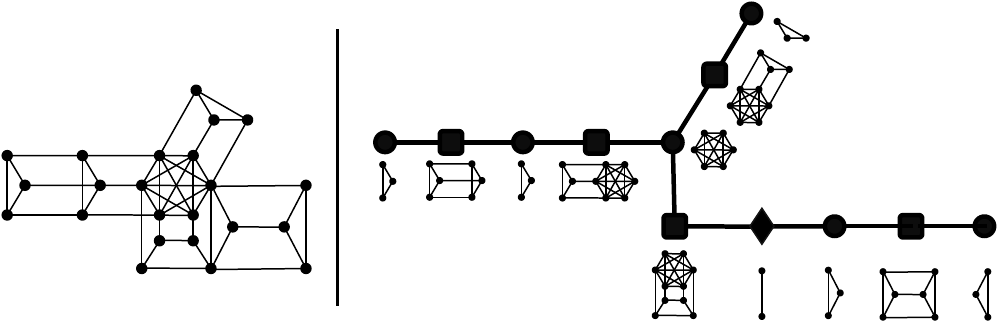}
\caption{A rigid totally loose graph and its tree representation. Each vertex of the tree representation represents a subgraph of $G$, which is depicted next to the vertex. The three different types of vertices  in the tree representation are indicated by the different shapes. (square - 3-connected SNGR graph; circle - clique; diamond - 2-separator)}
\label{fig:gen4}
\end{figure}

For a subgraph $T_0$ of $T_G$, let $G[T_0]$ denote the subgraph of $G$ obtained by taking the union of the subgraphs of $G$ corresponding to the vertices of $T_0$.%
\begin{lemma}\label{tree rep is a tree}
Let $G=(V,E)$ be a rigid totally loose graph with $|V|\geq 3$. Then the tree representation $T_G=(\mathcal{H}_G\cup \mathcal{S}_G\cup \mathcal{Q}_G, F)$ of $G$ is a tree. For every subtree $T_0$ of $T_G$, $G[T_0]$ is a standard subgraph of $G$ that can be obtained from the set of graphs corresponding to $V(T_0)$ by clique sum operations between the (subgraphs that are represented by) adjacent vertices of $T_0$. Furthermore, $G[T_G]=G$.
\end{lemma}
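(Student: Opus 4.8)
The plan is to prove the three assertions about $T_G$ by induction on $|V|$, using the clique-sum decomposition supplied by Lemma~\ref{lem:triangleor3conn} together with the component-wise behaviour of $\mathcal{H}_G$ recorded in Lemma~\ref{tloose subgraphs ind}. The base case is when $G$ is $3$-connected: then $\mathcal{Q}_G=\emptyset$, and by Lemma~\ref{max 3-conn sngr} the minimal non-complete standard subgraphs coincide with the standard $3$-connected SNGR subgraphs, so $\mathcal{H}_G$ consists of these, while $\mathcal{S}_G$ consists of the separating maximal cliques. An edge of $T_G$ joins a clique $S$ to a block $H$ exactly when $S\subseteq H$, and I would check that this bipartite incidence graph is connected and acyclic by relating it to how $G$ is assembled from its standard $3$-connected SNGR subgraphs and standard cliques via restricted clique sums (Theorem~\ref{glpf:con}); that $G[T_G]=G$ in this case is immediate because every vertex and edge of $G$ lies in some standard $3$-connected SNGR subgraph or standard clique.

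For the inductive step I would pick a $2$-separator $Q=(a,b)$ and write $G$ as the clique sum of $G_1,\dots,G_k$ along the edge $ab$, where $G_i=G[V_i\cup\{a,b\}]$ and the $V_i$ are the components of $G-\{a,b\}$; each $G_i$ is again rigid and totally loose by Lemma~\ref{lem:triangleor3conn}. The key bookkeeping claim is that the vertex sets of $T_G$ decompose almost disjointly: by Lemma~\ref{tloose subgraphs ind} (applied iteratively) $\mathcal{H}_G=\bigcup_i \mathcal{H}_{G_i}$, the set $\mathcal{S}_G$ of standard cliques of size $\geq 3$ splits into the contributions of the $G_i$ (a standard clique of size $\geq 3$ cannot cross the separator $\{a,b\}$ since $ab\in E$ and any clique meeting both sides would contain a third vertex on one side, forcing a larger clique on that side by Lemma~\ref{separ in tloose subgraph}(c)), and $\mathcal{Q}_G=\{Q\}\cup\bigcup_i(\mathcal{Q}_{G_i}\setminus\{Q\})$. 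Thus $T_G$ is obtained from the trees $T_{G_i}$ by gluing along the single new diamond vertex $Q$: for each $i$ the separator $Q$ is a $2$-separator of $G_i$ as well, and the edges of $T_G$ incident to $Q$ are exactly, for each $i$, the edge joining $Q$ to the unique minimal standard subgraph $X_i$ of the $3$-block of $G_i$ containing $Q$ (Lemma~\ref{unique min standard}). Gluing the $k$ subtrees $T_{G_i}$ at the shared leaf/internal vertex $Q$ yields a tree, and $G[T_G]=\bigcup_i G[T_{G_i}]=\bigcup_i G_i=G$.

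The remaining claim is that for an arbitrary subtree $T_0$ the subgraph $G[T_0]$ is standard and is the clique sum of its pieces along the edges of $T_0$. For standardness I would argue that if $H$ is a standard $3$-connected SNGR subgraph or a standard clique represented by a vertex of $T_0$, and $w\in V(G)\setminus V(G[T_0])$, then any two neighbours of $w$ inside $V(G[T_0])$ lie in a common clique: using Lemma~\ref{separ in tloose subgraph}(a),(c) and the fact that each individual vertex of $T_0$ is standard, one shows $|N_G(w)\cap V(G[T_0])|\leq 2$, so the union inherits standardness. That the union is assembled by clique sums between adjacent vertices of $T_0$ follows because adjacent pairs in $T_G$ are exactly an $(\mathcal{S},\mathcal{H})$ containment (a clique-sum along the contained clique) or a $(\mathcal{Q},X)$ pair sharing precisely the $2$-separator edge $ab$ (a $2$-clique sum), and non-adjacent vertices of a subtree share no vertices beyond what is forced through the separating cliques, which is where the tree (acyclicity) property is doing the work. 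The main obstacle I expect is this last point: verifying that the intersection of the subgraphs represented by two non-adjacent tree vertices is contained in the separating clique on the path between them, so that the overlaps are exactly the clique-sum interfaces and no extra identifications occur. I would handle it by combining Lemma~\ref{glpf}(b) (intersections of rigid subgraphs of a totally loose graph are rigid, hence standard cliques) with the acyclicity established above, so that any shared vertex must lie on every separating clique along the unique tree path, forcing it into the adjacent-pair interface.
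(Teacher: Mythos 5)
Your plan follows the paper's proof essentially verbatim: induction on $|V|$, splitting $G$ along a $2$-separator (or, when $G$ is $3$-connected and non-complete, along a standard clique separator) into rigid totally loose pieces, using Lemma~\ref{tloose subgraphs ind} to see that $\mathcal{H}_G$, $\mathcal{S}_G$, $\mathcal{Q}_G$ decompose accordingly, and gluing the smaller tree representations at the vertex corresponding to the separator, with the subtree/clique-sum claims carried along by the induction. Two small corrections: the $3$-connected non-complete case is not really a base case — the paper handles it by the same inductive splitting along a standard clique separator, which is cleaner than trying to extract acyclicity of the $\mathcal{S}_G$--$\mathcal{H}_G$ incidence graph from the non-canonical decomposition of Theorem~\ref{glpf:con} — and $Q=(a,b)$ is \emph{not} a $2$-separator of $G_i=G[V_i\cup\{a,b\}]$ (since $G_i-\{a,b\}=G[V_i]$ is connected by the choice of $V_i$), though this slip does not affect the argument.
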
 
\begin{proof}

The proof is by induction on $|V|$. If $G$ is complete, then $\mathcal{H}_G= \mathcal{Q}_G=\emptyset$, $\mathcal{S}_G=\{G\}$, and thus the statement is trivial. Consider the case where $G$ is a non-complete 3-connected rigid totally loose graph. If there is no standard clique separator in $G$, then $G$ is SNGR by Lemma \ref{3conn SNGR lemma}, $\mathcal{H}_G=\{G\}$, $\mathcal{Q}_G=\emptyset$.  In this case $T_G$ is a star. Suppose that there is a standard clique separator $S$ in $G$. 
Let $G'$ and $G''$ be induced subgraphs of $G$ such that $G$ is the clique sum of $G'$ and $G''$ along $S$. By induction $T_{G'}$ and $T_{G''}$ are trees.
It is clear that ${\cal S}_{G}={\cal S}_{G'}\cup {\cal S}_{G''}$, and by Lemma \ref{tloose subgraphs ind} we have ${\cal H}_{G}={\cal H}_{G'}\cup {\cal H}_{G''}.$ Hence,  $T_G$ can be obtained by gluing together $T_{G'}$ and $T_{G''}$  along the vertex $S$. For every subtree $T_0$ of $T_G$ with $S\in V(T_0)$, $G[T_0]$ can be obtained by taking the clique sum of $G[T_0\cap T_{G'}]$ and $G[T_0\cap T_{G''}]$ along the clique $S$. Thus the auxiliary statements follow by induction. 

Now suppose that $G$ is not 3-connected and let $Q=(a,b)$ be a 2-separator of $G$. Suppose that $(c,d)$ is another 2-separator. By Lemma \ref{pair}, $cd\in E$, and it follows %
that $(a,b)$ does not separate $c$ and $d$. Let $V_1,\dots, V_k$ be the vertex sets of the components of $G-\{a,b\}$. Let $G_i=G[V_i\cup \{a,b\}]$ for $1\leq i\leq k$. 
By induction $T_{G_i}$ is a tree for $1\leq i\leq k$. Furthermore, $$\mathcal{Q}_G =\{Q\}\cup \bigcup_{i=1}^k\mathcal{Q}_{G_i},\;\;\;\; \mathcal{H}_G=\bigcup_{i=1}^k\mathcal{H}_{G_i} \;\; \text{ and } \;\; \mathcal{S}_G=\bigcup_{i=1}^k \mathcal{S}_{G_i}.$$
Hence,
$T_G$ can be obtained by taking the disjoint union of the trees $T_{G_i}$ ($1\leq i\leq k$), adding the vertex $Q$, and connecting $Q$ to each tree. These operations result in a tree. For every subtree $T_0$ of $T_G$ with $Q\in V(T_0)$, $G[T_0]$ can be obtained by taking the clique sum of the graphs $G[T_0\cap T_{G_i}]$ ($1\leq i\leq k$) along $Q$. Thus the auxiliary statements follow by induction. 
\end{proof}

According to Lemma \ref{tree rep is a tree}, for every subtree $T_0$ of $T_G$, $G[T_0]$ is a standard subgraph of $G$. The following lemma implies that every standard subgraph with at least 3 vertices can be obtained this way. 

\begin{lemma}
Let $G=(V,E)$ be a rigid totally loose graph, and let $G_0$ be a standard subgraph of $G$ with at least 3 vertices. Let $T_G$ (resp. $T_{G_0}$) be the tree representation of $G$ (resp $G_0$).   Then $T_{G_0}$ is a subtree of $T_G$, and %
$G_0=G[T_{G_0}]$. 
\end{lemma}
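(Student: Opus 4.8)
The plan is to show the two containments $T_{G_0}\subseteq T_G$ (as a subtree, meaning its vertex set embeds and the induced subgraph is connected) and $G_0=G[T_{G_0}]$ by exhibiting a natural correspondence between the building blocks of $G_0$ and those of $G$. The key idea, which I would establish first as a lemma, is that the three vertex families behave well under restriction to a standard subgraph: I claim that $\mathcal{H}_{G_0}\subseteq \mathcal{H}_G$, $\mathcal{S}_{G_0}\subseteq \mathcal{S}_G$, and $\mathcal{Q}_{G_0}\subseteq \mathcal{Q}_G$. For the $\mathcal{H}$ part this follows from Lemma \ref{max 3-conn sngr}: a standard $3$-connected SNGR subgraph of $G_0$ is standard in $G$ (since, as noted in the subsection on standard subgraphs, being standard in the standard subgraph $G_0$ is equivalent to being standard in $G$), and it is still $3$-connected SNGR, hence it lies in $\mathcal{H}_G$. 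The analogous statement for $\mathcal{S}_G$ uses that a maximal clique of $G_0$ with at least three vertices is a maximal clique of $G$ (a larger clique of $G$ containing it would, by the standardness of $G_0$, have to be contained in $G_0$ by the clique-intersection argument of Lemma \ref{separ in tloose subgraph}(c), contradicting maximality in $G_0$). For $\mathcal{Q}_G$ I would invoke Lemma \ref{separ in tloose subgraph}(a): a $2$-separator of $G_0$ separates $G$ as well, and it remains a $2$-separator because $G$ is totally loose so its endpoints induce an edge.

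Next I would argue that the edge relation of $T_{G_0}$ is inherited. The $SH$-edges are determined purely by the containment $S\subseteq H$, which is preserved under passing between $G_0$ and $G$ since both $S$ and $H$ are honest induced subgraphs. The $QX$-edges require identifying, for each $2$-separator $Q$ and each $3$-block $G_0'$ of $G_0$ containing $Q$, the minimal standard subgraph $X$ of $G_0'$ containing $Q$ with at least three vertices given by Lemma \ref{unique min standard}. Here the subtlety is that the $3$-blocks of $G_0$ must be related to the $3$-blocks of $G$: since $G_0$ is a standard induced subgraph, Lemma \ref{separ in tloose subgraph}(b) tells us that rigid induced subgraphs inherit $3$-connectivity, so every $3$-block of $G_0$ sits inside a $3$-block of $G$, and the minimal standard subgraph computed inside $G_0$ agrees with the one computed inside $G$ (because, again, standardness is intrinsic to the standard subgraph). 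This shows the edge $QX$ of $T_{G_0}$ is also an edge of $T_G$, so the embedding is edge-preserving.

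The main obstacle I anticipate is verifying that the embedded image is \emph{connected}, i.e. genuinely a subtree rather than a disconnected subforest of $T_G$, and simultaneously that $G[T_{G_0}]=G_0$ exactly (no spurious extra vertices of $T_G$ get pulled in, and nothing is missed). The clean way to handle both at once is to induct on $|V(G_0)|$ using the same clique-sum decomposition along a standard clique separator that drives Lemma \ref{tree rep is a tree}: if $G_0$ is a single block (complete, or $3$-connected SNGR with no standard clique separator) the claim is immediate from the corresponding base cases of that lemma; otherwise I would split $G_0$ along a standard clique separator $S\in\mathcal{S}_{G_0}$ into $G_0'$ and $G_0''$, apply the inductive hypothesis to each to get subtrees $T_{G_0'},T_{G_0''}$ of $T_G$ with $G[T_{G_0'}]=G_0'$ and $G[T_{G_0''}]=G_0''$, and observe via Lemma \ref{tloose subgraphs ind} that these two subtrees meet exactly in the vertex $S$ of $T_G$. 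Gluing at $S$ then yields a connected subtree, and taking the union of the corresponding subgraphs recovers the clique sum $G_0'\cup_S G_0''=G_0$. The $2$-separator case is handled analogously by splitting along a $2$-separator $Q\in\mathcal{Q}_{G_0}$ and gluing the resulting subtrees at the vertex $Q$. Thus $T_{G_0}$ is a subtree of $T_G$ and $G_0=G[T_{G_0}]$, completing the proof.
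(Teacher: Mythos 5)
Your first two paragraphs are essentially the paper's proof: establish $\mathcal{H}_{G_0}\subseteq\mathcal{H}_G$, $\mathcal{S}_{G_0}\subseteq\mathcal{S}_G$, $\mathcal{Q}_{G_0}\subseteq\mathcal{Q}_G$ (the paper gets all three at once from the earlier observation that a subgraph of a standard subgraph $G_0$ is standard in $G_0$ iff it is standard in $G$, plus Lemma \ref{separ in tloose subgraph}(a) for the $2$-separators), and then check that both types of edges of $T_{G_0}$ are edges of $T_G$. That part is fine.

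The third paragraph, however, solves a problem that does not exist, and the induction you set up to solve it has a gap. Connectivity of the image and the identity $G[T_{G_0}]=G_0$ are both immediate from Lemma \ref{tree rep is a tree} applied to $G_0$ itself: that lemma already asserts that $T_{G_0}$ is a tree and that $G_0[T_{G_0}]=G_0$, and since every vertex of $T_{G_0}$ represents an induced subgraph of $G_0$, hence of $G$, the union $G[T_{G_0}]$ computed inside $G$ is the same graph. Once you know $V(T_{G_0})\subseteq V(T_G)$ and $E(T_{G_0})\subseteq E(T_G)$, a tree whose vertices and edges all lie in $T_G$ is automatically a (connected) subtree of $T_G$; there is no danger of a ``disconnected subforest.'' As for the induction itself: when you split $G_0$ along a standard clique separator $S$ into $G_0'$ and $G_0''$, these pieces need not be standard subgraphs of $G$ (a vertex of $G_0''$ can have three or more neighbours inside $S\subseteq G_0'$ whenever $|V(S)|\geq 3$), so the inductive hypothesis --- which is the lemma statement for standard subgraphs --- does not apply to them as written. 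This is harmless only because the whole induction can be deleted in favour of the direct appeal to Lemma \ref{tree rep is a tree}.
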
 
\begin{proof}
Since every standard subgraph of $G_0$ is also standard in $G$, and every 2-separator of $G_0$ is also a 2-separator of $G$ by Lemma \ref{separ in tloose subgraph}(a), 
we have
$\mathcal{H}_{G_0}\subseteq \mathcal{H}_G, \mathcal{S}_{G_0}\subseteq \mathcal{S}_G \text{ and } \mathcal{Q}_{G_0}\subseteq \mathcal{Q}_G .$ 
Thus $V(T_{G_0})\subseteq V(T_G)$. The relation $E(T_{G_0})\subseteq E(T_G)$ follows from the definitions and the fact that if a standard subgraph $X$ of $G$ is contained in $G_0$, then it is standard in $G_0$ as well.
  \end{proof}

In certain augmentation problems 
it is useful to work with a simplified 
tree representation, which is obtained by removing some redundant leaves.
We shall use the following concept in Section \ref{sec:minsize}.
For a rigid totally loose graph $G$ let $T_G^r$ denote the minimal subtree of $T_G$ for which $G[T_G^r]=G$. $T_G^r$ can be obtained from $T_G$ by removing every leaf vertex of $T_G$ that is a subgraph of its neighbour in $T_G$. Note that all of these vertices are in ${\cal S}_G$, and for every leaf vertex $S$ of $T_G$ for which $S\in \mathcal{S}_G\cap V(T_G^r)$, $S$ is a 3-block of $G$. We call $T_G^r$ the {\it reduced tree representation} of $G$. 
We will simply denote the set $V(T_G^r)\cap {\cal S}_G$ (resp. ${\cal H}_G$, ${\cal Q}_G$) by $\mathcal{S}$ (resp. $\mathcal{H}, \mathcal{Q}$).

\section{Globally rigid supergraphs}
\label{sec:augmenations}

In this section we give a simple characterization of
the sets of new edges whose addition makes
a rigid totally loose graph globally rigid in $\R^2$.

\subsection{The totally loose closure of an augmented graph}\label{subsec:tlcaug}

 Let $G=(V,E)$ be a rigid totally loose graph in $\R^2$. 
For $Z\subseteq V$ let $\cll(G,Z)$ (resp. $\cll^*(G,Z)$) denote a minimal $Z$-rigid (resp. minimal standard $Z$-rigid) vertex set in $G$. By Lemma \ref{glpf}(b), $\cll(G,Z)$ is unique. %
By applying the following lemma to $V_0=\cll(G,Z)$, it follows that $\cll^*(G,Z)$ is unique, too. 

\begin{lemma}
Let $G=(V,E)$ be a rigid totally loose graph in $\R^2$ and let $V_0 \subseteq V$ be a vertex set for which $G[V_0]$ is rigid. Then $\cll^*(G,V_0)$ can be obtained from $V_0$ by adding the vertices $w$ to $V_0$ for which $|N_G(w)\cap V_0|\geq 3$.
\end{lemma}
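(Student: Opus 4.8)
The plan is to show that the set $W = V_0 \cup \{w \in V - V_0 : |N_G(w) \cap V_0| \geq 3\}$ is precisely $\cll^*(G,V_0)$, i.e. the minimal \emph{standard} $V_0$-rigid vertex set. I must verify two things: first, that $G[W]$ is both rigid and standard (so $W$ is a candidate), and second, that every standard $V_0$-rigid set must contain $W$ (so $W$ is minimal and contained in all candidates). Recall that a subgraph $G[X]$ is standard when it is rigid and satisfies $|N_G(w) \cap X| \leq 2$ for all $w \in V - X$; so the two defining conditions split cleanly along these two tasks.

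For rigidity of $G[W]$: since $G[V_0]$ is already rigid and each added vertex $w$ has at least three neighbours inside $V_0 \subseteq W$, I can add these vertices one at a time. Here I would invoke the fact stated in the preliminaries that adding a new vertex of degree $d=2$ to a rigid graph preserves rigidity (Theorem \ref{theorem:gluck} and the ${\cal R}_d$-bridge remark); more precisely, attaching a vertex by at least $d$ edges to an existing rigid subgraph keeps the subgraph rigid. Since each $w$ has $\geq 3 > 2$ neighbours already present, every intermediate graph stays rigid, and the order does not matter because $W$ is a fixed set. Thus $G[W]$ is rigid and clearly $V_0$-rigid.

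For standardness of $G[W]$: I need that no vertex $w \in V - W$ has three or more neighbours in $W$. Suppose for contradiction that some $w \notin W$ has $|N_G(w) \cap W| \geq 3$. Since $G[W]$ is rigid, Lemma \ref{coro} applies: a vertex joined by three paths (three edges) to a rigid subgraph, together with a path avoiding that subgraph, yields weak global linkedness — but more directly, having three neighbours in the rigid set $W$ forces $w$ to be weakly globally linked to vertices of $W$, contradicting total looseness unless those edges are already present; the cleaner route is that such a $w$ would itself satisfy $|N_G(w) \cap V_0| \geq 3$ after noting that $w$'s neighbours in $W$ propagate rigidity. The subtle point, and what I expect to be the main obstacle, is that $w$'s three neighbours in $W$ need not lie in $V_0$ — they could be among the added vertices. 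I would resolve this by arguing that $G[W \cup \{w\}]$ would then be rigid (again by the degree-$\geq 3$ attachment), so $w$ would qualify for inclusion by the same rule applied relative to the rigid set $W$, and by minimality/closure this means $w$ must already be captured; formally, one shows the construction is closed by iterating, and that the iteration stabilizes exactly at $W$ because any candidate vertex with three neighbours in the current rigid set already has three neighbours reaching back into $V_0$ through the rigid structure.

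Finally, for minimality and containment: let $G[X]$ be any standard $V_0$-rigid set. I claim $W \subseteq X$. Take any $w \in W - V_0$; it has $\geq 3$ neighbours in $V_0 \subseteq X$. If $w \notin X$, then $w$ is a vertex outside $X$ with $|N_G(w) \cap X| \geq |N_G(w) \cap V_0| \geq 3$, contradicting the standardness of $G[X]$. Hence $w \in X$, giving $W \subseteq X$. Combined with the fact that $G[W]$ is itself standard and $V_0$-rigid, this shows $W$ is the unique minimal standard $V_0$-rigid set, i.e. $\cll^*(G,V_0) = W$, as required. The containment direction is the easy half; the genuine work is the standardness verification of $G[W]$ in the preceding paragraph, where controlling neighbours among the newly added (non-$V_0$) vertices is the crux.
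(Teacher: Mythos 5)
Your overall architecture is the same as the paper's (show the one-round set $W=V_1$ is standard and $V_0$-rigid, and that every standard $V_0$-rigid set contains it), and your rigidity and minimality/containment parts are fine. But there is a genuine gap exactly where you predicted one: the standardness of $G[W]$. You correctly isolate the problem case --- a vertex $u\notin W$ with three neighbours in $W$, not all of which lie in $V_0$ --- but your proposed resolution is circular. Saying that ``$w$ would qualify for inclusion by the same rule applied relative to $W$'' and that ``by minimality/closure $w$ must already be captured'' assumes the very thing the lemma asserts, namely that one round of the rule already produces a closed (standard) set. Likewise, the claim that ``any candidate vertex with three neighbours in the current rigid set already has three neighbours reaching back into $V_0$ through the rigid structure'' is precisely the statement that needs proof, and no argument is given for it. If you instead iterate the rule to force closure, you prove a statement about the iterated closure, not about the one-round set $W$ that the lemma describes.

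The paper closes this gap with a short but non-obvious argument that you did not find. Suppose $u\notin V_1$ has $|N_G(u)\cap V_1|\ge 3$; since $|N_G(u)\cap V_0|\le 2$, some neighbour $x$ of $u$ lies in $V_1-V_0$. The graph $G[V_1\cup\{u\}-\{x\}]$ is rigid: build $G[V_1-\{x\}]$ from $G[V_0]$ by degree-$\ge 3$ vertex additions, then attach $u$, which still has at least two neighbours there, and degree-$2$ additions preserve rigidity in $\R^2$. Now for each $y\in N_G(x)\cap V_0$, the path $u,x,y$ is internally disjoint from this rigid set, so Lemma~\ref{coro} makes $\{u,y\}$ weakly globally linked; total looseness then forces $uy\in E$. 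Hence $N_G(x)\cap V_0\subseteq N_G(u)\cap V_0$, and since $x\in V_1-V_0$ gives $|N_G(x)\cap V_0|\ge 3$, we get $|N_G(u)\cap V_0|\ge 3$, contradicting $u\notin V_1$. This is the ``propagation back to $V_0$'' you gestured at, but it requires the specific device of deleting $x$ from the rigid set and routing a path through it --- your appeal to Lemma~\ref{coro} never sets this up, so as written the proof does not go through.
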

\begin{proof}
Let $V_1$ denote the vertex set that is obtained from $V_0$ by adding the vertices $w$ to $V_0$ for which $|N_G(w)\cap V_0|\geq 3$. It is obvious that $V_1\subseteq \cll^*(G,V_0)$. To see that $V_1\supseteq \cll^*(G,V_0)$ we shall prove that $V_1$ is a standard set.
Suppose, for a contradiction, that there is some $u\in V-V_1$ such that $|N_G(u)\cap V_1|\geq 3$. Let $X$ denote $N_G(u)\cap V_1$. Since $u\notin V_1$, $X\not\subseteq V_0$. Thus there is some $x\in X-V_0$. It follows from the rigidity of $G[V_1\cup\{u\}-\{x\}]$ by Lemma \ref{coro} that $uy\in E$ for all $y\in N_G(x)\cap V_0$. Thus $N_G(x)\cap V_0\subseteq N_G(u)\cap V_0$.  Since $x\in V_1$, we have $|N_G(x)\cap V_0|\geq 3$. It follows that $|N_G(u)\cap V_0|\geq 3$, which is a contradiction.
\end{proof}

For a vertex set $Z$, the complete graph on $Z$ is denoted by $K(Z)$. 

\begin{theorem}\label{tlc GplusK(Z)}
Let $G=(V,E)$ be a rigid totally loose graph in $\R^2$, $Z\subseteq V$. Then $$\tlc(G+K(Z))=G+K(\cll^*(G,Z)).$$
\end{theorem}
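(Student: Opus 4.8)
The plan is to prove the two inclusions $\tlc(G+K(Z))\subseteq G+K(W)$ and $G+K(W)\subseteq\tlc(G+K(Z))$ separately, where $W=\cll^*(G,Z)$. Write also $R=\cll(G,Z)$ for the unique minimal rigid set containing $Z$ (Lemma \ref{glpf}(b)); by the preceding lemma, applied with $V_0=R$, we have $W=R\cup\{w:|N_G(w)\cap R|\ge 3\}$.

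For the inclusion $\tlc(G+K(Z))\subseteq G+K(W)$ I would show that $G+K(W)$ is itself a rigid totally loose graph; since $Z\subseteq W$ gives $G+K(Z)\subseteq G+K(W)$, minimality of the totally loose closure then yields the inclusion. To see that $G+K(W)$ is totally loose I would exhibit its clique-sum structure. By Lemma \ref{glpf}(a) the neighbourhood $S_C=N_G(V(C))\subseteq W$ of every component $C$ of $G-W$ induces a clique, so $G+K(W)$ is the clique sum of $K(W)$ with the pieces $P_C=G[V(C)\cup S_C]$ along the cliques $S_C$ (each of size at least $2$ since $G$ is $2$-connected). Each $P_C$ is rigid (being a clique-separator piece of the rigid graph $G$) and totally loose (an induced subgraph of $G$), hence by Theorem \ref{glpf:con} is built from complete and $3$-connected SNGR graphs by restricted clique sums. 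The only point that uses the definition of $W$ is the restrictedness of the new sums $K(W)\oplus_{S_C}P_C$: when $|S_C|\ge 3$, standardness of $W$ forbids any vertex of $C$ from being adjacent to all of $S_C$ (that would give it at least three neighbours in $W$), so $S_C$ is a maximal clique of $P_C$ and the sum is restricted. Theorem \ref{glpf:con} then gives that $G+K(W)$ is rigid and totally loose.

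For the reverse inclusion I would use Theorem \ref{thm:onestep}, by which $\tlc(G+K(Z))=(G+K(Z))+J$ in a single step, $J$ being the set of non-adjacent weakly globally linked pairs of $G+K(Z)$. Hence it suffices to prove that every non-adjacent pair $\{u,v\}\subseteq W$ is weakly globally linked in $G+K(Z)$. The vertices of $W-R$ are easy once $K(R)\subseteq\tlc(G+K(Z))$ is established: each $w\in W-R$ has at least three neighbours in $R$, so in the closure $R\cup\{w\}$ spans a degree-$\ge 3$ extension of $K(R)$, which is $3$-connected and $\mathcal{R}_2$-connected and hence globally rigid by Theorem \ref{thm}; thus $\{w,x\}$ is weakly globally linked in the closure for every $x\in R$, and since the closure is totally loose these pairs must be edges. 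Iterating over $W-R$ forces all of $K(W)$ into $\tlc(G+K(Z))$. It therefore remains to prove $K(R)\subseteq\tlc(G+K(Z))$, which—transferring weak global linkedness from the induced subgraph $(G+K(Z))[R]=G[R]+K(Z)$ to $G+K(Z)$ by monotonicity (extend a witnessing generic realization)—reduces to the following core statement: if $H$ is rigid and totally loose with $\cll(H,Z)=V(H)$, then $H+K(Z)$ is globally rigid (equivalently, by Lemma \ref{J}(b), $\tlc(H+K(Z))$ is complete).

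I would prove the core statement by induction on the clique-sum decomposition of $H$ furnished by Theorem \ref{glpf:con}. If $H$ is a leaf it is complete or a $3$-connected SNGR graph; in the SNGR case the hypothesis $\cll(H,Z)=V(H)$ forces $Z$ to contain a pair that is non-adjacent in $H$, and adding that single edge already makes $H$ globally rigid. Otherwise $H$ has a clique separator $S$ (a $2$-separator, with $ab\in E$ by Lemma \ref{pair}, or a standard clique separator of size $\ge 3$), and the hypothesis $\cll(H,Z)=V(H)$ guarantees that $Z$ meets the interior of at least two parts, so $K(Z)$ supplies an edge crossing $S$; combining this crossing edge with Lemma \ref{coro} shows that each vertex of $S$ becomes weakly globally linked to the $Z$-vertices, which ``pins'' the separator and lets the induction hypothesis apply to each part (seeded by its own $Z$-vertices together with the pinned $S$). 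The main obstacle is the gluing step: having made each part globally rigid, I must recombine them across $S$. Two globally rigid graphs sharing a clique of size at least three glue to a globally rigid graph automatically, because the three shared points pin the parts relative to one another; the troublesome case is $|S|=2$, where a single shared edge leaves a reflectional ambiguity that is destroyed precisely by the crossing $K(Z)$-edge. Turning this geometric intuition into a uniform proof that the recombined graph is $3$-connected and $\mathcal{R}_2$-connected—and hence globally rigid by Theorem \ref{thm}—across every clique separator is the delicate technical heart of the argument.
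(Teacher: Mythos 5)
Your first inclusion, $\tlc(G+K(Z))\subseteq G+K(\cll^*(G,Z))$, is essentially the paper's argument: exhibit $G+K(\cll^*(G,Z))$ as a restricted clique sum of the clique $K(\cll^*(G,Z))$ with the rigid totally loose pieces $G[V(C)\cup N_G(V(C))]$ and invoke Theorem \ref{glpf:con}; your observation that standardness of $\cll^*(G,Z)$ is exactly what makes the new sums restricted is the same one-line justification the paper gives. Your reduction of the reverse inclusion to the ``core statement'' about $H=G[\cll(G,Z)]$ is also sound (weak global linkedness does transfer from an induced subgraph to the ambient graph by extending a witnessing generic realization, and the passage from $K(\cll(G,Z))$ to $K(\cll^*(G,Z))$ via degree-$3$ vertices is fine).

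The problem is the core statement itself, where you propose an induction on the clique-sum decomposition of $H$ and explicitly leave the recombination step unproven. This is a genuine gap, and it is compounded by a difficulty in the setup you gloss over: when you ``seed'' a piece $P=H[V(C)\cup S]$ with $Z'=(Z\cap V(P))\cup S$, the induction hypothesis gives global rigidity of $P+K(Z')$, but the edges of $K(Z')$ joining $S$ to $Z\cap V(P)$ are not present in $H+K(Z)$; the separator vertices are only \emph{weakly globally linked} to the $Z$-vertices, so you cannot simply apply the hypothesis to a subgraph of $H+K(Z)$ --- you would have to carry the whole argument inside $\tlc(H+K(Z))$ and control how the closure grows across several pieces and separators simultaneously. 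Together with the unresolved $|S|=2$ reflection issue in the gluing, this is precisely the part that needs a proof, not a sketch. The paper avoids the induction entirely: writing $\bar H=\tlc(G+K(Z))[V_0]$ with $V_0=\cll(G,Z)$, it first shows $\bar H$ is $3$-connected (every $2$-separator $(a,b)$ of $G[V_0]$ has $ab\in E$ by Lemma \ref{pair}, so by minimality of $\cll(G,Z)$ some pair of $Z$ is separated by it and the corresponding $K(Z)$-edge kills it); then for $|Z|=2$ it observes that minimality forces $H_0+uv$ to be an ${\cal R}_2$-circuit for any minimally rigid spanning $H_0$, so $G[V_0]+uv$ is ${\cal R}_2$-connected and Theorem \ref{thm} makes $\bar H$ globally rigid, hence complete; for $|Z|\geq 3$ the pairwise cliques $\cll(G,\{u,v\})$ are merged into one via Lemma \ref{separ in tloose subgraph}(c). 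You should either adopt that direct argument or supply the full inductive gluing; as it stands your second half is not a proof.
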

\begin{proof}
If $|Z|= 1$, then the statement is trivial. Suppose that $|Z|\geq 2$. We claim that $\bar G=G+K(\cll^*(G,Z))$ is totally loose. $G[V(C)\cup N_G(V(C))]$ is a rigid totally loose graph for every component $C$ of $\bar{G}-\cll^*(G,Z)$. $\bar{G}$ can be obtained by adding these totally loose graphs to the clique $K(\cll^*(G,Z))$. Since $\cll^*(G,Z)$ is standard, these clique sums are restricted. Thus $\bar G$ is indeed totally loose by Theorem \ref{glpf:con}. It follows that $\bar G\supseteq  \tlc(G+K(Z))$.

We shall prove that $\bar{G} \subseteq \tlc(G+K(Z))$ holds as well.  Let $V_0=\cll(G,Z)$, $H=G[V_0]$ and $\bar H=\tlc(G+K(Z))[V_0]$. Since $G\left[\cup_{u,v\in Z} (\cll(G,\{u,v\}))\right]$ is rigid, we have $$V_0=\bigcup_{u,v\in Z} (\cll(G,\{u,v\})).$$ We claim that $\bar H$ is a complete graph. If $|V_0|\leq 3$, then this follows from the rigidity of $H$.
 Suppose that $|V_0|\geq 4$.
  We first show that $\bar H$ is 3-connected. For any 2-separator $(a,b)$ of $H$, $ab\in E$ by Lemma \ref{pair}. Thus for every component $C$ of $H-\{a,b\}$, the graph $G[V(C)\cup \{a,b\}]$ is rigid. 
  Hence, the minimality of $\cll(G,Z)$ implies that there are $u,v\in Z$ such that $(a,b)$ separates $u$ and $v$ in $G$. 
It follows that $(a,b)$ is not a 2-separator in $ \bar H$, and thus $\bar H$ is indeed 3-connected. Suppose that $|Z|=2$, $Z=\{u,v\}$. 
Then, since $H$ has no proper $(u,v)$-rigid subgraph, $H_0+uv $ is an ${\cal R}_2$-circuit for every minimally rigid spanning subgraph $H_0$ of $H$; therefore, $H+uv$ is ${\cal R}_2$-connected and $\bar H$ is complete by Theorem \ref{thm}. 
Now suppose that $|Z|\geq 3$. Let $S$ be a maximal clique in $\bar H$ that contains $Z$. It follows from the $|Z|=2$ case that for every pair $u,v\in Z$, $\cll(G,\{u,v\})$ is a clique in $\bar{H}$. Hence, $S$ contains $\cll(G,\{u,v\})$ by Lemma \ref{separ in tloose subgraph}(c) and the 3-connectivity of $\bar{H}$.  Therefore, $V(S)=V_0$ and $\bar H$ is indeed complete. Hence, $G+K(V_0)\subseteq \tlc(G+K(Z))$. Since every totally loose supergraph of $G+K(V_0)$ is a supergraph of $\bar{G}$, we have $\bar{G} \subseteq \tlc(G+K(Z))$.
\end{proof}

\subsection{The characterization of globally rigid supergraphs}
\label{subsec:treerepandaug}

 Let $G=(V,E)$ be a rigid totally loose graph, let $u,v\in V$ with $uv\notin E$, and consider  the standard subgraph $G_0=G[\cll^*(G,\{u,v\})]$. 
Let $P$ be a minimal subtree of $T_G$ for which $G[P]=G_0$. We claim that $P$ is unique and that $P$ is a path.
Let $W^u$ be the set of vertices of $T_G$ that contain $u$. $W^u$ induces a subtree of $T_G$; let this subtree be denoted by $T^u$. Define $T^v$ similarly.
 Note that for a subtree $T'$ of $T_G$, $G_0\subseteq G[T']$ if and only if $T'\cap T^u\neq \emptyset\neq T'\cap T^v$. %
It follows that $P$ is the unique shortest path that connects $T^u$ and $T^v$. Let $X$ and $Y$ be the two end-vertices of $P$. ($T^u$ and $T^v$ might have a common vertex, but, since $uv\notin E$, not more. In this case $XY$ is a loop.) We say that $XY$ is the edge corresponding to $uv$, and we denote $XY$ by $T_G\langle uv\rangle$. For a set $E'$ of edges in the complement of $G$, let $T_G\langle E'\rangle=\{T_G\langle uv\rangle\}_{uv\in E'}$.
 
 \begin{theorem}\label{glob rigid augmentation lemma}
Let $G=(V,E)$ be a rigid totally loose graph, and let $T_{G}=(\mathcal{H}_G\cup \mathcal{S}_G\cup \mathcal{Q}_G, F)$ be the tree representation of $G$. Suppose that $E'$ is a set of edges in the complement of $G$. The graph $G+E'$ is globally rigid if and only if
\begin{enumerate}[(i)]
\item there is a cycle in $T_G+T_G\langle E'\rangle$ containing $H$ for every $H\in \mathcal{H}_G$, and
\item $Q$ is not a cut vertex in $T_G+T_G\langle E'\rangle$ for every $Q\in \mathcal{Q}_G$. 
\end{enumerate} 
\end{theorem}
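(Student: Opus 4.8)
The plan is to use the tree representation $T_G$ together with Lemma~\ref{J} to reduce the statement to a clean combinatorial characterization. By Lemma~\ref{J}(b), $G+E'$ is globally rigid if and only if $\tlc_2(G+E')$ is complete. Since $G$ is totally loose, $\tlc_2(G+E')$ is built by repeatedly closing rigid subgraphs into cliques, and Theorem~\ref{tlc GplusK(Z)} describes exactly how adding a complete graph on a set $Z$ forces the clique closure of the standard rigid hull $\cll^*(G,Z)$. So the first step is to translate the addition of an edge $uv\in E'$ into the combinatorial language of $T_G$: adding $uv$ makes the whole standard subgraph $G_0=G[\cll^*(G,\{u,v\})]$ into a clique, and $G_0=G[P]$ where $P=T_G\langle uv\rangle$ is the path (or loop) connecting $T^u$ and $T^v$. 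Thus I would prove that adding $uv$ has the effect, on the tree side, of \emph{merging} all vertices of $T_G$ lying on the path $P$ into a single clique-type vertex; equivalently, it identifies the endpoints $X,Y$ of the edge $T_G\langle uv\rangle$ by a new tree edge, and the resulting cycle/multigraph structure records which standard subgraphs have been absorbed into a common clique.

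The heart of the argument is an induction (on $|V(T_G)|$, or equivalently on the number of clique-sum pieces) showing that in $\tlc_2(G+E')$ a standard $3$-connected SNGR subgraph $H\in\mathcal{H}_G$ becomes complete (i.e.\ globally rigid) if and only if condition~(i) holds for $H$, and that a separating clique or $2$-separator vertex of $T_G$ fails to remain a genuine clique separator of $\tlc_2(G+E')$ precisely when condition~(ii) holds. For the forward direction I would argue contrapositively: if some $H\in\mathcal{H}_G$ lies on no cycle of $T_G+T_G\langle E'\rangle$, then $H$ has a standard clique $S\subseteq H$ that separates the multigraph, and by Lemma~\ref{tloose subgraphs ind} together with Lemma~\ref{3conn SNGR lemma} one exhibits a non-adjacent pair inside $H$ that remains globally loose in $G+E'$ (using the clique-sum invariance of weak global linkedness, Lemma~\ref{cliquesum}); hence $\tlc_2(G+E')$ is not complete. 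Similarly, if some $Q\in\mathcal{Q}_G$ is a cut vertex of the augmented tree, then $Q=(a,b)$ persists as a $2$-separator of $\tlc_2(G+E')$, so $\tlc_2(G+E')$ cannot be $3$-connected, and by Theorem~\ref{thm} it is not complete.

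For the converse I would show that (i) and (ii) together force $\tlc_2(G+E')$ to have no standard clique separator and no $2$-separator, and then invoke Theorem~\ref{thm}. Concretely, condition~(ii) guarantees $3$-connectivity: any $2$-separator of $\tlc_2(G+E')$ would be a $2$-separator $Q$ of $G$ that survives the closure, which by the tree-merging description of the $T_G\langle E'\rangle$ edges would make $Q$ a cut vertex of $T_G+T_G\langle E'\rangle$. Condition~(i) guarantees $\mathcal{R}_2$-connectivity: each $H\in\mathcal{H}_G$ lying on a cycle means the standard SNGR piece $H$ gets a non-adjacent pair linked across its separating cliques, so that by Theorem~\ref{globlaza jellemzes2} and the SNGR property its closure becomes complete, and the complete pieces glue up along the cycles into a single $\mathcal{R}_2$-component. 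The main obstacle I expect is the bookkeeping in the inductive step, namely showing that contracting the path $P=T_G\langle uv\rangle$ in the tree faithfully mirrors the clique closure produced by Theorem~\ref{tlc GplusK(Z)}, and that these closures compose correctly when several edges of $E'$ share or cross pieces of $T_G$; handling the loop case (when $T^u\cap T^v\neq\emptyset$) and verifying that the $\cll^*$ operation commutes with the tree-path description is where the delicate work lies. Once that correspondence is nailed down, both conditions fall out as the combinatorial translations of $3$-connectivity and $\mathcal{R}_2$-connectivity via Theorem~\ref{thm}.
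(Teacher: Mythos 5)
Your setup (Lemma~\ref{J}, Theorem~\ref{tlc GplusK(Z)}, and the path/edge correspondence $T_G\langle uv\rangle$) matches the paper's, but the two directions diverge from the paper's proof in ways that leave real gaps. The most serious one is in the converse. You route the sufficiency of (i) and (ii) through Theorem~\ref{thm}, claiming that condition (i) ``guarantees $\mathcal{R}_2$-connectivity'' because the SNGR pieces become complete and ``glue up along the cycles into a single $\mathcal{R}_2$-component.'' This cannot be the mechanism: when $\mathcal{H}_G=\emptyset$ (e.g.\ $G$ a restricted clique sum of complete graphs), condition (i) is vacuous, yet the theorem still asserts that (ii) alone forces global rigidity; moreover the pieces in $\mathcal{S}_G$ and the triangle $3$-blocks consist entirely of $\mathcal{R}_2$-bridges within themselves, so they are not ``complete $\mathcal{R}_2$-components'' that could glue to anything. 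The composition step you flag as ``where the delicate work lies'' --- that the clique closures produced by the various edges of $E'$ merge into the complete graph --- is precisely the content of the theorem, and your sketch defers it rather than proving it. The paper avoids $\mathcal{R}_2$-connectivity of $G+E'$ entirely: it takes a maximal subtree $T_0$ of $T_G$ such that $V(G[T_0])$ induces a clique in $\tlc(G+E')$ and extends it one tree-vertex at a time using Lemma~\ref{separ in tloose subgraph}(c) (two cliques of a $3$-connected totally loose graph sharing at least two vertices span a clique), concluding that $\tlc(G+E')$ is complete and invoking Lemma~\ref{J}. No induction on $|V(T_G)|$ and no contraction/merging formalism is needed.

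The forward direction for (i) also needs repair. If $H$ lies on no cycle of $T_G+T_G\langle E'\rangle$, the graph $G+E'$ is in general \emph{not} a clique sum along a separating clique of $H$ --- edges of $E'$ may run from $H$ into a branch, or within a branch that overlaps $H$ --- so Lemma~\ref{cliquesum} does not apply directly to $G+E'$. What does work (and is what the paper does) is to observe that ``no cycle through $H$'' forces every edge of $T_G\langle E'\rangle$ to stay within a single component of $T_G-H$, so that $G+E'$ is contained in the graph $\bar G=G+K(V_1)+\dots+K(V_k)$ obtained by completing each branch; $\bar G$ is a restricted clique sum of $H$ with complete graphs, hence totally loose by Theorem~\ref{glpf:con}, and since $H$ is not complete, $\tlc(G+E')\subseteq\bar G$ contradicts global rigidity of $G+E'$ via Lemma~\ref{J}. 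Your treatment of condition (ii) in both directions (globally rigid $\Rightarrow$ $3$-connected, and a surviving $2$-separator of $G$ corresponding to a cut vertex of the augmented tree) is correct and agrees with the paper.
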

\begin{proof}
To prove the “only if" direction suppose that $G+E'$ is globally rigid. Let $H\in \mathcal{H}_G$, and suppose, for a contradiction, that there is no cycle of $T_G+T_G\langle E'\rangle$ that contains $H$. Let $T_1,\dots, T_k$ denote components of $T_G-H$, and let $V_i=V(G[T_i])$. Consider the graph $\bar G=G+K(V_1)+\dots+K(V_k)$. $\bar G$ can be obtained from $\{H, K(V_1),\dots, K(V_k)\}$ by clique sum operations. Since every neighbour of $H$ in $T_G$ represents a standard clique in $G$, these clique sums are restricted. It follows that $\bar G$ is totally loose by Theorem \ref{glpf:con}. 
Since $G+E'\subseteq \bar G$, we have $\tlc(G+E')\subseteq \bar G$, which is a contradiction by Lemma \ref{J}, as $G+E'$ is globally rigid. Let $Q=\{a,b\}$ be a 2-separator of $G$. $E'$ induces a connected graph on the components of $G-\{a,b\}$. Hence, $T_G\langle E'\rangle$ induces a connected graph on the components of $T_G-Q$.  

To prove the “if" direction suppose that (i) and (ii) hold. It follows from (ii) that $G+E'$ is 3-connected. %
Note that since $T_G$ is a tree, 
(i) is equivalent to saying that for every $H\in \mathcal{H}$ there exists an edge $uv\in E'$ such that $P$ contains $H$, where $P$ is the path that connects the end-vertices of $T_G\langle uv\rangle$ in $T_G$. 
Then $V(H)\subseteq \cll^*(G,\{u,v\})$. 
By Theorem \ref{tlc GplusK(Z)}, $V(H)$ induces a clique in $\tlc(G+uv)$, and thus in $\tlc(G+E')$. 
Let $T_0$ be a maximal subtree of $T_G$ for which $V(G[T_0])$ induces a clique in $\tlc(G+E')$. Suppose, for a contradiction, that $T_0\neq T_G$. Let $X\in V(T_G)-V(T_0)$ be adjacent to $T_0$. Let $U_1=V(G[T_0])$ and $U_2=V(X)$. Both $U_1$ and $U_2$ induce a clique in $\tlc(G+E')$. Hence, $U_1\cup U_2$ also induces a clique in $\tlc(G+E')$ by Lemma \ref{separ in tloose subgraph}(c), which contradicts the maximality of $T_0$. Thus $T_0=T_G$ and $\tlc(G+E')$ is a complete graph. By Lemma \ref{J}, $G+E'$ is globally rigid.
\end{proof}

\section{Cheapest globally rigid supergraphs}
\label{sec:cheapest}

In this section, we give a 5-approximation algorithm for the optimization problem of finding the cheapest globally rigid supergraph of an arbitrary input graph. Formally, the input of the problem is a graph $G=(V,E)$ and a cost function $c:V\times V\to \R^+\cup \{\infty\}$. Our goal is to find a cheapest set $E'$ of new edges for which $G'=(V,E\cup E')$ is globally rigid in $\R^2$.
Consider the following problems.
\begin{itemize}[label={}, leftmargin =0.6cm] 
\item \underline{Problem A} \\Find a cheapest rigid supergraph of a given graph.

\vspace{-0.1cm}\item \underline{Problem B} \\Find a cheapest supergraph of a given tree $T$ in which every element of a given set $U$ of vertices is not a cut vertex.

\vspace{-0.1cm}\item \underline{Problem C} \\Find a cheapest supergraph of a given tree $T$ in which every element of a given set $U$ of vertices is contained in a cycle.
\end{itemize}

Problem A is the problem of finding a cheapest base of the rigidity matroid on the vertex set $V$.  Hence, it can be solved optimally in polynomial time. However, both Problem B and C are NP-complete: By putting $U=V(T)$, the cheapest 2-vertex-connected supergraph problem can be reduced to Problem B. And it is an easy exercise to show that the cheapest 2-edge-connected supergraph problem can be reduced to Problem C. 

\begin{lemma}\label{2approxB}
There is a 2-approximation algorithm for Problem $B$.  
\end{lemma}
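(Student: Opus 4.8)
The plan is to reduce Problem B to the cheapest 2-vertex-connected supergraph problem, for which Khuller and Raghavachari \cite{KhRa} give a 2-approximation, by preprocessing $T$ so that the unconstrained vertices (those in $V(T)\setminus U$) become irrelevant. Recall that, since Problem B with $U=V(T)$ is exactly 2-vertex-connectivity augmentation, the only extra feature here is that vertices outside $U$ are allowed to remain cut vertices; the whole task is to ``neutralize'' these vertices at no cost and without disturbing the requirements at $U$.

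First I would build an auxiliary graph $G_0=T+F$ as follows: for every $v\in V(T)\setminus U$ with $d_T(v)\geq 2$ and neighbours $n_1,\dots,n_k$, put into $F$ the edges $n_1n_2,n_2n_3,\dots,n_{k-1}n_k$, regarded as already present in $G_0$ (hence not charged by $c$). Deleting such a $v$ from $G_0$ leaves its former branches joined through the path on $n_1,\dots,n_k$, so every $v\in V(T)\setminus U$ is a non-cut vertex of $G_0$. The key step is the observation that each edge $n_in_j\in F$ affects the non-cut status of $v$ \emph{alone}: for any vertex $w\neq v$, either $w\in\{n_i,n_j\}$ and the edge disappears together with $w$, or $w\notin\{n_i,v,n_j\}$ and then $n_i,n_j$ are already joined in $T-w$ by the length-two path $n_i\,v\,n_j$, so $n_in_j$ is redundant in $(H+n_in_j)-w$ for every $H\supseteq T$. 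Consequently no edge of $F$ can help a vertex of $U$ reconnect its branches.

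From this I would derive that the two optima coincide. If $E'$ is feasible for Problem B, then every vertex of $U$ is non-cut in $T+E'\subseteq G_0+E'$ and every $v\in V(T)\setminus U$ is non-cut in $G_0+E'$ by construction; as $G_0+E'$ is connected this makes it 2-vertex-connected, so $E'$ is a feasible augmentation of $G_0$ of the same cost. Conversely, if $G_0+E''$ is 2-vertex-connected, then each $w\in U$ is non-cut in $T+F+E''$, and since no edge of $F$ contributes to reconnecting the branches at $w$, the vertex $w$ is already non-cut in $T+E''$; hence $E''$ is feasible for Problem B of the same cost. Thus the minimum cost of a 2-vertex-connected augmentation of $G_0$ equals the optimum of Problem B. I would then run the Khuller--Raghavachari 2-approximation \cite{KhRa} on $G_0$ with the unchanged cost function $c$ and output the added edges, obtaining a feasible Problem B solution of cost at most twice the optimum.

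The main point to get right is the key observation that the auxiliary edges of $F$ are strictly local to their non-$U$ vertex and never assist a vertex of $U$; in particular one must check the case where the neighbours $n_i,n_j$ themselves lie in $U$, which is exactly the case handled by noting that $n_in_j$ is incident to $w$ or redundant in $T-w$. The remaining ingredients are routine edge cases: that $G_0$ is connected and has at least three vertices, so that ``every vertex is non-cut'' coincides with 2-vertex-connectivity, and that the edges of $F$ are genuine non-edges of $T$ (a triangle $v\,n_i\,n_j$ cannot occur in a tree), so no conflict with existing edges arises.
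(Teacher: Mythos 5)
Your proof is correct and takes essentially the same approach as the paper: add cost-free auxiliary edges that make every vertex outside $U$ a non-cut vertex without affecting the cut status of vertices in $U$, thereby reducing Problem B to the cheapest 2-vertex-connected supergraph problem and invoking Khuller--Raghavachari. The only (immaterial) difference is that the paper's auxiliary graph $\bar T$ joins all pairs connected by a path internally disjoint from $U$, while you add a sparser set of edges (a path on the neighbours of each vertex outside $U$); your verification that these edges are ``local'' to their vertex is the same redundancy argument that justifies the paper's equivalence.
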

\begin{proof}
Let $T=(V,E)$ be a tree, $U\subset V$. Let $\bar T$ be the supergraph of $T$ in which we connect all those vertex pairs $u,v\in V$ for which there exists a $u$-$v$-path in $T$ that is internally disjoint from $U$. For a set $F$ of edges, $\bar T+F$ is 2-connected if and only if every element of $U$ is not a cut vertex in $T+F'$. Thus we can reduce Problem B to an equivalent 2-vertex-connected supergraph problem for which we can use the 2-approximation algorithm of \cite{KhRa}.
\end{proof}

\begin{lemma}\label{2approxC}
There is a 2-approximation algorithm for Problem C.
\end{lemma}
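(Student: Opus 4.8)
The plan is to mimic the reduction used for Problem~B in Lemma~\ref{2approxB}, but reduce Problem~C to the cheapest 2-edge-connected supergraph problem instead of the 2-vertex-connected one. Recall that a constant-factor approximation for the latter is available: Khuller and Vishkin gave a 2-approximation algorithm for the minimum weight 2-edge-connected spanning subgraph / augmentation problem, so once the reduction is set up, the bound follows.

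First I would build an auxiliary graph that encodes the ``contained in a cycle'' condition as an edge-connectivity condition. Let $T=(V,E)$ be the input tree and $U\subseteq V$ the marked vertices. The key observation is that a vertex $w$ lies on a cycle of $T+F$ if and only if $w$ is not isolated-by-removal in the 2-edge-connected sense relative to the relevant part of the tree, but more usefully: in a tree, a vertex $u$ fails to lie on any cycle of $T+F$ precisely when some tree-edge incident to $u$ is a bridge of $T+F$. So I would contract all maximal subtrees of $T$ whose internal vertices avoid $U$—equivalently, form $\bar T$ by suppressing the condition at unmarked vertices—so that the requirement ``every $u\in U$ lies on a cycle'' becomes exactly the requirement that no tree-edge incident with a vertex of $U$ remains a bridge. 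Concretely, I would split each marked vertex $u\in U$ appropriately, or dually contract each edge of $T$ not forced to be covered, producing a graph $\bar T$ in which the feasible augmenting sets $F$ are exactly those making $\bar T+F$ 2-edge-connected.

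The cleanest way to carry this out is the standard trick for edge-covering a tree: for each edge $e=xy$ of $T$ that is required to be covered (namely, every edge incident to a vertex of $U$), the augmenting edge set must contain a non-tree edge whose fundamental cycle uses $e$. Requiring this simultaneously for all such edges is an instance of the tree augmentation problem, for which a 2-approximation is classical. Formally I would let $\bar T$ be obtained from $T$ by marking the edge set $E_U=\{e\in E: e \text{ is incident to some } u\in U\}$ and observe that $T+F$ places every $u\in U$ on a cycle if and only if $F$ covers every edge of $E_U$; this is because in a tree the only way to put $u$ on a cycle is to cover at least one edge incident to $u$—and covering one incident edge already places $u$ on the fundamental cycle. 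Thus Problem~C reduces to a minimum-cost edge-cover (tree augmentation) problem, which admits a 2-approximation.

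The main obstacle I anticipate is the precise equivalence in the last sentence: it is tempting but slightly too strong to say a vertex $u$ is on a cycle iff every incident edge is covered; in fact only one incident edge needs to be covered, so the target is a \emph{cover of the vertex} $u$, i.e.\ covering at least one of its incident tree-edges, rather than covering a fixed edge set. I would therefore set up the reduction as a vertex-covering-by-cycles condition and verify that this still fits the 2-edge-connectivity augmentation framework (or equivalently a weighted variant of tree augmentation where each marked vertex must have one of its incident tree edges covered). Checking that the resulting instance is genuinely a 2-edge-connected supergraph problem on a suitable contracted tree—and that the optimum of the reduced instance equals that of Problem~C, so that the approximation ratio transfers without loss—is the step that requires care, but it is routine once the contraction of unmarked portions of $T$ is made explicit.
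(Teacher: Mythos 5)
Your reduction, as stated, has a genuine gap that the final paragraph of your proposal identifies but does not close. Requiring $F$ to cover every edge of $E_U=\{e\in E(T): e \text{ incident to some } u\in U\}$ is strictly stronger than requiring every $u\in U$ to lie on a cycle, and the two optima can differ by an unbounded factor: take a spider with centre $u\in U$ and legs $u-a_i-b_i$ for $i=1,\dots,k$, with $U=\{u\}$. A single new edge $b_1b_2$ puts $u$ on a cycle, whereas covering all $k$ edges $ua_i$ needs at least $\lceil k/2\rceil$ new edges. So the approximation ratio does not transfer from the ``cover all of $E_U$'' instance. The correct condition is the disjunctive one you state at the end --- for each $u\in U$, at least one chosen edge $f$ must have $u$ on its fundamental tree path $P_f$ --- but this is a covering problem with one constraint per \emph{vertex} and a choice among the incident tree edges; it is not an instance of tree augmentation or of 2-edge-connectivity augmentation, and contracting the unmarked portions of $T$ does not change this (a marked vertex of degree $\geq 3$ still needs only one incident edge covered). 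You assert that fitting this condition into the 2-edge-connectivity framework is routine, but no such reduction is exhibited, and I do not see one; this is precisely where the work lies.

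The paper takes a different route at exactly this point. It reformulates Problem C as covering $U$ by the paths $\mathcal{P}=\{P_f\}$ with $c(P_f)=c(f)$ (this matches your corrected formulation), then roots $T$ at an arbitrary vertex, splits each $P_f$ at its topmost vertex into two directed paths $P_f'$, $P_f''$ each of cost $c(P_f)$, and solves the resulting \emph{rooted, directed} path-cover problem exactly by a bottom-up dynamic program over the vertices (with a recursion distinguishing $v\in U$ from $v\notin U$). The factor $2$ is lost only in the splitting step, since a path chosen in the directed instance is paid for at most twice when lifted back. That dynamic program is the substantive ingredient missing from your proposal; without it (or a genuine reduction to a problem with a known constant-factor algorithm), the claimed $2$-approximation is not established.
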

\begin{proof}  Suppose that a tree $T=(V,F)$, a set $U\subseteq V$ and a cost function $c:V\times V\to \R^+\cup\{\infty\}$ are given. Let $F'$ denote the set of edges with finite cost. For $f\in F'$ let $P_f$ be the unique path in $T$ that connects the end-vertices of $f$. Let $\mathcal{P}=\{P_f:f\in F'\}$. Extend $c$ to $\mathcal{P}$ so that $c(P_f)=c(f)$. Problem $C$ is equivalent to finding a cheapest cover of $U$ by the paths in $\mathcal{P}$.

We first show that this problem is optimally solvable if $T$ is an arborescence, not a tree, and the paths are directed paths. For any $v\in V$ let $T_v$ be the set of $v$ and its descendants. 
Let ${\rm OPT}_v$ denote the cheapest cover of $T_v\cap U$.  
If $v\notin U$, then $${\rm OPT}_v=\sum_{r\in C(v)}{\rm OPT}_r,$$ where $C(v)$ denotes the children of $v$. If $v\in U$, then $${\rm OPT}_v = \min_{v\in P\in \mathcal{P}}\bigg\{c(P)+\sum_{r\in R(v,P)}{\rm OPT}_r\bigg\}, $$ where $R(v,P)$ denotes the set of those descendants of $v$ that are roots in the graph $T[V_v]-V(P)$. Thus a dynamic program, which starts from the leaves of the arborescence and computes ${\rm OPT}_v$ for every vertex $v$, solves the problem in polynomial time.  

Consider the original problem, where $T$ is a tree. Choose an arbitrary vertex $r\in V$, and orient the graph from $r$ as a root. Then every path $P_f\in \mathcal{P}$ falls apart into two directed paths $P'_f$ and $P''_f$. Define the cost of $P_f'$ and $P_f''$ as $c(P_f)$ and solve the covering problem with the paths $\bigcup_{P_f\in \mathcal{P}}\{P_f',P_f''\}$. Let the optimum solution of this modified problem be denoted by $\mathcal{S}$. The optimum value of the modified problem is at most two times the optimum value of the original problem. Thus we get a 2-approximation for the original problem by choosing every path $P_f$ for which at least one of $P_f'$ and $P_f''$ is in $\mathcal{S}$.
\end{proof}

We need one more notion before giving a constant-factor approximation algorithm for the cheapest globally rigid supergraph problem. 
Let $c:V\times V\to \R^+\cup \{\infty\}$ be a cost function and let $T$ be a tree representation of a graph with vertex set $V$. Let the {\it corresponding cost function} $c_T:V(T)\times V(T)\to \R^+\cup \{\infty\}$ be defined as $$c_T(XY)=\min_{T\langle uv\rangle=XY} c(uv)$$ for $X,Y\in V(T)$. Let $T^{-1}_c\langle XY\rangle$ denote the edge for which the minimum above is attained (if it is finite). For a set $F$ of edges on the vertex set $V(T)$, let $T^{-1}_c\langle F\rangle=\{T^{-1}_c\langle XY\rangle\}_{XY\in F}$.

\begin{algorithm}
\caption{}\label{alg:cap}
 \hspace*{\algorithmicindent} \textbf{Input:} a graph $G=(V,E)$ and a cost function $c:V\times V\to \R^+\cup \{\infty\}$\\
  \hspace*{\algorithmicindent} \textbf{Output:} an edge set $E'$ for which $G+E'$ is globally rigid in $\R^2$  
\begin{algorithmic}[1]
\State $\bar G=G+E^A$
$ \gets$ $c$-cheapest rigid supergraph of $G$ %
\Comment{instance of Problem A}\smallskip

\State $T=(\mathcal{H}\cup \mathcal{S}\cup \mathcal{Q}, F) \gets$ tree representation of the totally loose closure of $\bar G$  \smallskip

\State $c_T$ $\gets$ cost function on $V(T)\times V(T)$ corresponding to $c$ \smallskip

\State $T+F^B$
$\gets $ 2-approximation for the problem of finding a $c_T$-cheapest supergraph of $T$ in which for all $Q\in \mathcal{Q}$, $Q$ is not a cut vertex, 
\Comment{instance of Problem B}\smallskip

\State $T+F^C$
$\gets$ 2-approximation for the problem of finding a $c_T$-cheapest supergraph of $T$ in which for all $H\in \mathcal{H}$, $H$ is contained in a cycle
\Comment{instance of Problem C}\smallskip

\State \textbf{if} any of the optimum values above is infinite \textbf{then return $V\times V$}\smallskip

\State $E^B \gets T_c^{-1}\langle F^B\rangle$ %
\smallskip

\State $E^C \gets T_c^{-1}\langle F^C\rangle$ %
\smallskip

\State \textbf{return} $E^A\cup E^B\cup E^C$
\end{algorithmic}
\end{algorithm}

\begin{theorem}
Algorithm 1 is a 5-approximation for the cheapest globally rigid supergraph problem.
\end{theorem}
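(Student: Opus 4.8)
The plan is to bound the cost of the output $E^A \cup E^B \cup E^C$ by $5$ times the optimum, by comparing each of the three contributions against the optimum via the structural characterization in Theorem \ref{glob rigid augmentation lemma}. Let $E^*$ denote an optimal solution, so that $G+E^*$ is globally rigid with cost ${\rm OPT}=c(E^*)$. The first step is to handle $E^A$: since any globally rigid supergraph is in particular rigid, $E^*$ is a feasible solution to Problem A, and because Algorithm 1 computes a \emph{cheapest} rigid supergraph (line 1), we get $c(E^A)\leq {\rm OPT}$.

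The main work is bounding $c(E^B)$ and $c(E^C)$. The key idea is to show that the image $T\langle E^* \rangle$ of the optimal edge set under the tree-correspondence map is a feasible solution to \emph{both} Problem B and Problem C on the tree $T$, and that its $c_T$-cost does not exceed ${\rm OPT}$. For feasibility, I would argue as follows. Adding $E^A$ only makes the problem easier: the closure $\tlc(\bar G)$ whose tree representation is $T$ satisfies $\tlc(\bar G) \supseteq \bar G \supseteq G$, and by Lemma \ref{J}(a) the set of edges making $G$ globally rigid coincides (for global-rigidity purposes) with those making $\tlc(\bar G)$ globally rigid; so $G+E^*$ globally rigid implies $\tlc(\bar G)+(E^*\setminus E^A)$, hence $\bar G + E^*$, is globally rigid. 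Applying Theorem \ref{glob rigid augmentation lemma} to $\bar G$ and the edge set $E^*$, conditions (i) and (ii) hold for $T + T\langle E^*\rangle$: every $H\in\mathcal H$ lies on a cycle (the Problem C constraint) and no $Q\in\mathcal Q$ is a cut vertex (the Problem B constraint). Thus $T\langle E^*\rangle$ is feasible for Problem B and for Problem C simultaneously.

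For the cost comparison, the definition $c_T(XY)=\min_{T\langle uv\rangle = XY} c(uv)$ ensures that each tree-edge in $T\langle E^*\rangle$ costs at most the corresponding original edge, so $c_T(T\langle E^*\rangle)\leq c(E^*)={\rm OPT}$. Since the $2$-approximation algorithms of Lemma \ref{2approxB} and Lemma \ref{2approxC} return solutions $F^B,F^C$ with $c_T(F^B)\leq 2\,c_T(T\langle E^*\rangle)\leq 2\,{\rm OPT}$ and likewise $c_T(F^C)\leq 2\,{\rm OPT}$, and since $E^B=T_c^{-1}\langle F^B\rangle$ realizes each tree-edge by an original edge of exactly its $c_T$-cost, we get $c(E^B)=c_T(F^B)\leq 2\,{\rm OPT}$ and $c(E^C)\leq 2\,{\rm OPT}$. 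Finally I must check correctness of the output, namely that $G+E^A\cup E^B\cup E^C$ really is globally rigid: the combined edge set $T\langle E^B\cup E^C\rangle \supseteq F^B\cup F^C$ satisfies both (i) and (ii) of Theorem \ref{glob rigid augmentation lemma} for $\bar G$ (condition (ii) from $F^B$, condition (i) from $F^C$), so $\bar G + E^B\cup E^C$ is globally rigid, and hence so is $G+E^A\cup E^B\cup E^C$. Summing, $c(E^A)+c(E^B)+c(E^C)\leq {\rm OPT}+2\,{\rm OPT}+2\,{\rm OPT}=5\,{\rm OPT}$.

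The step I expect to be the main obstacle is the careful transfer between the original graph and $\bar G$: I must verify that replacing $G$ by $\bar G$ (and passing to its totally loose closure) neither loses feasibility of the optimum nor inflates its cost, and in particular that the tree-correspondence map $T\langle\cdot\rangle$ on edges in the complement of $\bar G$ interacts correctly with edges of $E^*$ that may already be present in $\bar G$ (i.e. edges of $E^A$). The delicate point is that $E^*$ and $E^A$ are chosen independently, so $E^*$ need not contain $E^A$; I would resolve this by working with $\bar G + E^*$ throughout and invoking Lemma \ref{J}(a) to reduce to $\tlc(\bar G)$, ensuring that the two approximation subproblems on $T$ are posed against the \emph{same} tree that the algorithm actually uses.
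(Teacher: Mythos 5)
Your proposal is correct and follows essentially the same route as the paper: bound $c(E^A)$ by ${\rm OPT}$ via feasibility for Problem A, use Theorem \ref{glob rigid augmentation lemma} to show the optimum of each tree subproblem is at most ${\rm OPT}$ (via the feasibility of $T\langle E^*\rangle$) and to certify global rigidity of the output, then sum $1+2+2=5$. The paper states the inequalities ${\rm OPT}_B,{\rm OPT}_C\leq {\rm OPT}_{GRA}$ more tersely, but the underlying argument — transferring the optimal solution through $\bar G$ and $\tlc(\bar G)$ via Lemma \ref{J} and reading off conditions (i) and (ii) on the tree — is exactly what you spell out.
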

\begin{proof}
Let $\rm{OPT}_{GRA}$ denote the optimum of the cheapest globally rigid supergraph problem with the input $G,c$. Let $\rm{OPT}_A$ (resp. $\rm{OPT}_B$ and $\rm{OPT}_C$) denote the optimum of the problem in line 1 (resp. line 4 and line 5). Then $\rm{OPT}_A\leq \rm{OPT}_{GRA}$. By Theorem \ref{glob rigid augmentation lemma} the inequalities $\rm{OPT}_B\leq \rm{OPT}_{GRA}$ and $\rm{OPT}_C\leq \rm{OPT}_{GRA}$ hold as well.  In particular, if any of $\rm{OPT}_A$, $\rm{OPT}_B$ and $\rm{OPT}_C$ is infinite, then $\rm{OPT}_{GRA}$ is also infinite. Suppose that this is not the case. Then $\tlc(\bar G)+ (E^B\cup E^C)$ is globally rigid by Theorem \ref{glob rigid augmentation lemma} and thus $ G+(E^A\cup E^B\cup E^C)$ is globally rigid by Lemma \ref{J}.  Furthermore,
 $$c(E^A\cup E^B\cup E^C)\leq c(E^A)+c(E^B)+c(E^C)=$$ $$=c(E^A)+c(F^B)+c(F^C)\leq \rm{OPT}_A+2OPT_B+2\rm{OPT}_C\leq 5\rm{OPT}_{GRA}.$$ This completes the proof.
\end{proof}

\section{Minimum size globally rigid supergraphs of rigid graphs}
\label{sec:minsize}

In this section we consider the special case of the cheapest supergraph problem
in which each new edge that we add to the input graph $G$ has unit cost. In this case we look for 
a minimum size globally rigid supergraph (or augmentation) of $G=(V,E)$ in $\R^d$. 
Kir\'aly and Mih\'alyk\'o \cite{KMsidma} gave a
polynomial time algorithm and a min-max formula for the optimum in the case when $d=2$ and $G$ is rigid in $\R^2$.
Our goal is to obtain similar result by using a substantially different approach, based on the new structural results
developed in the previous sections.

We start by reviewing the case $d=1$, which corresponds to the well known 2-connectivity augmentation problem.
Recall that a graph $G$ is 2-connected if it has at least two vertices and
$G-v$ is connected for all $v\in V(G)$. Note that $K_2$ is 2-connected.
Let $G=(V,E)$ be a connected graph with at least two vertices. For $v\in V$ let $b(G,v)$ denote the number of components of the graph $G-v$ and let $b(G)=\max \{b(G,v):v\in V\}$. 
A vertex $v$ is called a {\it cut vertex} of $G$ if $b(G,v)\geq 2$. 
A maximal 2-connected subgraph of $G$ is a {\it block}. 
A block of $G$ is called an {\it end-block} if it contains at most one cut vertex of $G$. 
The {\it block-cut vertex tree} of $G$ has a vertex for each cut vertex of $G$ as well as for each block of $G$; a (vertex representing a) cut vertex $v$ and a (vertex representing a) block $H$ is connected by an edge in the block-cut vertex tree if and only if $v\in V(H)$. It is well-known that it is indeed a tree. Let $t(G)$ denote the number 
of end-blocks of $G$, i.e. the number of leaves of its block-cut vertex tree. 
Eswaran and
Tarjan \cite{ET}, and independently Plesnik \cite{Plesnik}, showed that the 2-connectivity augmentation problem is solvable in
polynomial time and provided the following characterization.

\begin{theorem}\label{2-conn aug of graphs}\cite{ET, Plesnik}
Let $G=(V,E)$ be a connected graph with $|V|\geq 3$.
Then $$\min (|E'|: G+E' \text{ is 2-connected}) = \max\Set*{\bigg\lceil\frac{t(G)}{2}\bigg\rceil, b(G)-1}.$$
\end{theorem}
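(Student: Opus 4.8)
The plan is to reduce the problem to the block-cut vertex tree $T$ of $G$, whose leaves are exactly the end-blocks (so $t(G)$ is the number of leaves) and whose cut-vertex nodes $v$ have degree $b(G,v)$ (so $b(G)$ is the maximum degree over the cut-vertex nodes of $T$). Throughout I assume $G$ is not already 2-connected, so $T$ has at least two leaves and at least one cut vertex; the degenerate 2-connected case is trivial. Writing $\beta(G)=\max\{\lceil t(G)/2\rceil,\, b(G)-1\}$, the goal is to show $\min\{|E'| : G+E' \text{ is 2-connected}\}=\beta(G)$.

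First I would establish the lower bound $\min\ge\beta(G)$ via two independent inequalities. For the term $\lceil t(G)/2\rceil$: every end-block $B$ with cut vertex $c$ must receive a new edge with an endpoint in $V(B)\setminus\{c\}$, since otherwise $c$ remains a cut vertex of $G+E'$; as the interiors of distinct end-blocks are disjoint, each new edge serves at most two end-blocks, whence $t(G)\le 2|E'|$. For the term $b(G)-1$: pick a cut vertex $v$ with $b(G,v)=b(G)$; in a 2-connected $G+E'$ the graph $(G+E')-v$ must be connected, but edges of $E'$ incident to $v$ are deleted and so cannot help merge the $b(G)$ components of $G-v$, forcing $|E'|\ge b(G)-1$.

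The upper bound is the substantive direction, which I would prove by induction on $\beta(G)$ (equivalently, on the number of blocks). The inductive step hinges on the claim that one can always choose two end-blocks $B_1,B_2$ and non-cut-vertices $x\in B_1$, $y\in B_2$ so that $e=xy$ satisfies $\beta(G+e)=\beta(G)-1$; then an optimal augmentation of $G+e$ of size $\beta(G)-1$ together with $e$ gives the desired $\beta(G)$ edges. Adding such an $e$ merges all blocks on the $T$-path from $B_1$ to $B_2$ (together with the degree-two cut vertices on it) into one new block $B'$, while each cut vertex on the path that has an off-path branch survives with its degree lowered by exactly one. To make $\beta$ drop by exactly one I would distinguish two regimes. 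When $b(G)-1\ge\lceil t(G)/2\rceil$, I would route the path through the maximum-degree cut vertices, choosing $B_1,B_2$ in distinct branches so that each such vertex loses one unit of degree and $b$ falls to $b(G)-1$. When $\lceil t(G)/2\rceil > b(G)-1$, I would instead pair two leaves that are ``far apart'' in $T$ (as in the classical fact that a tree needs $\lceil(\text{number of leaves})/2\rceil$ edges to become 2-edge-connected), so that $t$ drops by two while $b$ stays at most $b(G)$.

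The hard part will be making the bookkeeping in this claim airtight: I must guarantee a pair $(B_1,B_2)$ whose addition lowers the governing term by exactly one without raising the other, and several configurations resist a naive choice — for instance when several cut vertices simultaneously attain the maximum degree $b(G)$ (they need not lie on a common leaf-to-leaf path), or when the merge creates a new block $B'$ that is itself a leaf, changing $t$ by one rather than two. Resolving these requires a careful analysis of where the maximum-degree cut vertices sit relative to the leaves, and is precisely the combinatorial core isolated by Eswaran--Tarjan and Plesn\'ik. Alternatively, since the block-cut vertex tree is exactly the $d=1$ analogue of the tree representation used elsewhere in this paper, one may recast the whole argument as a covering problem on $T$ (in the spirit of Problems B and C) and invoke their result directly.
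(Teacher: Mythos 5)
The paper offers no proof of this statement: it is imported as a black box from Eswaran--Tarjan and Plesn\'{\i}k, so there is nothing internal to compare against. Judged on its own terms, your proposal is not yet a proof. The lower bound is fine (both counting arguments are correct and standard). But the upper bound rests entirely on the claim that one can always pick end-blocks $B_1,B_2$ and interior vertices $x,y$ so that $\beta(G+xy)=\beta(G)-1$, and you do not establish this claim --- you state it, list the configurations where the naive choice fails, and acknowledge that resolving them ``is precisely the combinatorial core isolated by Eswaran--Tarjan and Plesn\'{\i}k.'' That core \emph{is} the theorem. Concretely: when several cut vertices attain degree $b(G)$ and do not lie on a common leaf-to-leaf path of $T$, no single edge lowers all of their degrees, so you must argue that in such configurations the term $\lceil t(G)/2\rceil$ necessarily dominates (e.g.\ three branch vertices of degree $b$ force roughly $3(b-1)$ leaves); when $\lceil t(G)/2\rceil = b(G)-1$ you need the chosen edge to lower \emph{both} quantities simultaneously; and when the merged block $B'$ is itself a leaf of the new block-cut tree, $t$ drops by only one. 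None of these is handled, and the induction does not go through without them. (It is worth knowing that this case analysis is genuinely treacherous --- the original Eswaran--Tarjan argument for the biconnectivity case contained an error that had to be repaired in later work.)

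Your closing suggestion --- recast the problem as an augmentation problem on the tree $T$ and ``invoke their result directly'' --- is exactly what the paper does by citing \cite{ET, Plesnik}, and is the honest resolution here; but then the proposal reduces to the citation and the preceding sketch should not be presented as a proof. If you do want a self-contained argument, the cleanest route is probably the one the paper itself uses one dimension up: prove the edge-connectivity analogue (Theorem \ref{2-edgeconn aug of graphs}, whose leaf-pairing argument is genuinely easier) and then handle the cut-vertex-degree constraint by the gadget in the proof of Lemma \ref{tree 2-edge-connected aug}, replacing each non-constrained vertex by a short cycle. That still requires proving the $\max\{\lceil t/2\rceil,\,b-1\}$ formula for 2-connectivity of the gadget graph, so it reorganizes rather than removes the difficulty, but it localizes the case analysis to a single well-structured tree problem.
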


We shall also need the corresponding result for the 2-edge connectivity augmentation problem.

\begin{theorem}\label{2-edgeconn aug of graphs}\cite{ET}
Let $G=(V,E)$ be a connected graph. Then $$\min (|E'|: G+E' \text{ is 2-edge-connected}) = \bigg\lceil \frac{t(G)}{2}\bigg\rceil.$$
\end{theorem}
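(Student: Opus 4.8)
The plan is to reduce the statement to an augmentation problem on a tree and then prove matching lower and upper bounds. First I would contract every maximal $2$-edge-connected subgraph of $G$ to a single vertex. Because $G$ is connected and its bridges are exactly the edges lying on no cycle, the resulting graph $T$ is a tree; its leaves I will call the terminals, and $t(G)$ denotes their number. The reduction is justified by the observation that a set $E'$ of new edges makes $G$ $2$-edge-connected if and only if the image of $E'$ makes $T$ $2$-edge-connected, since an edge is a bridge of $G+E'$ precisely when the corresponding edge of $T$ is a bridge of $T+E'$. Hence it suffices to establish the formula when the input is the tree $T$.

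For the lower bound I would use a simple degree count. In a $2$-edge-connected graph every vertex has degree at least two, while each of the $t(G)$ terminals has degree one in $T$. Thus every terminal must be incident with at least one edge of $E'$, and since each new edge has only two endpoints, it can be incident with at most two terminals. Therefore $|E'|\ge \lceil t(G)/2\rceil$, which already gives one inequality.

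For the upper bound I would give an explicit construction attaining this value. The key structural fact is that deleting any single edge $e$ of $T$ splits the terminals into two nonempty sets, and if the terminals $v_1,\dots,v_\ell$ (with $\ell=t(G)$) are listed in the order of a depth-first traversal of $T$, then one of these two sets is always a cyclically contiguous block of the list. An edge $v_iv_j$ added to $T$ covers exactly the edges on the $v_iv_j$-path in $T$, so $e$ ceases to be a bridge as soon as some added edge has its two terminals on opposite sides of $e$. I would therefore add the ``antipodal'' edges $v_iv_{i+\lceil \ell/2\rceil}$, with indices taken cyclically, for $i=1,\dots,\lceil \ell/2\rceil$; this uses exactly $\lceil \ell/2\rceil$ edges, and, because each such pair straddles any proper contiguous block of terminals, it crosses every terminal-separating cut and hence destroys every bridge of $T$.

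The step I expect to be the main obstacle is verifying the correctness of this construction, namely that the antipodal pairing in depth-first order really crosses every cut induced by a tree edge, together with the careful handling of the case when $\ell$ is odd, where one terminal ends up incident with two new edges while the total count stays at $\lceil \ell/2\rceil$. This is an elementary but genuinely delicate interval-crossing argument; once it is in place, combining it with the lower bound of the second paragraph yields the claimed min-max equality.
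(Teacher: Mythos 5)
The paper offers no proof of this statement; it is quoted verbatim from Eswaran and Tarjan \cite{ET}, so there is nothing internal to compare against. Your argument is the standard proof of that classical result, and it is essentially correct. The reduction to the bridge tree is sound: since $G$ is connected, no edge of $E'$ can be a bridge of $G+E'$, so $G+E'$ is $2$-edge-connected exactly when every bridge of $G$ lies on a cycle, which is exactly the condition that the contracted tree becomes bridgeless. The leaf-counting lower bound is right. The step you flag as delicate also goes through: the leaves on one side of any tree edge form a contiguous block in DFS order, and (passing to the complement if necessary) we may assume this block has at most $\lfloor \ell/2\rfloor$ leaves; the two endpoints of each added edge are at cyclic distance $\lfloor \ell/2\rfloor$, so the smallest cyclic interval containing both has $\lfloor \ell/2\rfloor+1$ leaves and hence no added edge lies entirely inside the block, while every leaf is an endpoint of some added edge, so some added edge crosses the block. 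The odd case, with one leaf of degree two, causes no trouble.

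Two caveats are worth recording. First, your $t(G)$ is the number of leaves of the bridge tree, whereas the paper defines $t(G)$ as the number of end-blocks, i.e.\ leaves of the block--cut vertex tree. These quantities differ for graphs with cut vertices but no bridges (two triangles sharing a vertex is already $2$-edge-connected yet has two end-blocks), and with the paper's definition the displayed formula is false in general; your definition is the correct one for this theorem, and the two coincide on trees, which is the only case the paper actually uses (Lemma \ref{tree 2-edge-connected aug}). Second, realizing the added tree edges as new simple edges of $G$ requires choosing representative vertices in the leaf components, which fails only in the degenerate case $G=K_2$; this does not affect the paper's applications.
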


\subsection{Globally rigid augmentation in $\R^2$}

For a subset $U$ of $V$, let $b_U(G)=\max \{b(G,v):v\in U\}$. Furthermore, for $\{a,b\}\subset V$, let $c(G,\{a,b\})$ denote the number of components of the graph $G-\{a,b\}$ and let $c(G)=\max\{c(G,\{a,b\}):a,b\in V\}$.

 Let $G=(V,E)$ be a rigid graph in $\R^2$ and assume that $G+E'$ is globally rigid in $\R^2$. There are two simple lower bounds for $|E'|$.  Since every globally rigid graph is 3-connected, $E'$ must induce a connected graph on the components of $G-\{a,b\}$ for every pair $a,b\in V$, and thus $|E'|\geq c(G,\{a,b\})-1$. Hence, $|E'|\geq c(G)-1$. 
The following concept yields another lower bound for $|E'|$. We call a set $U\subseteq V$ {\it untied} if the graph $G+K({V-U})$ is not globally rigid and we call  $U$ an {\it untied end} if it is a minimal untied set. Let $l(G)$ denote the maximum number of pairwise disjoint untied sets in $G$. Clearly, $V(E')$ must intersect every untied set in $G$. Thus $|E'|\geq \lceil l(G)/2 \rceil$.

This gives us the $\min \geq \max$ inequality of the following min-max theorem.

\begin{theorem}\label{glob rigid aug minmax thm}
Let $G=(V,E)$ be a rigid graph. Then 
$$\min ( |E'|:G+E'=(V,E\cup E') \text{ is globally rigid}) = \max\Set*{ \bigg\lceil \frac{l(G)}{2}\bigg\rceil, c(G)-1}.\;\;\;\;\;(*)$$ 
Let $\tlc (G)$ be the totally loose closure of $G$, and let $T_{\tlc (G)}^r=(\mathcal{H}\cup \mathcal{S}\cup \mathcal{Q}, F)$ be the reduced tree representation of $\tlc (G)$. 
If $\tlc (G)$ is not a complete graph or a 3-connected SNGR graph, then $(*)$ is further equal to $\max\Set*{ \big\lceil t\big(T_{\tlc (G)}^r\big)/2\big\rceil, b_\mathcal{Q}\big(T_{\tlc (G)}^r\big)-1}$.
\end{theorem}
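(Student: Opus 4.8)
The plan is to prove the entire statement by pushing the augmentation problem onto the reduced tree representation $T:=T^r_{\tlc(G)}$ and solving a single tree–augmentation min–max there. First I would reduce to the case in which $G$ is rigid and totally loose. By Lemma~\ref{J}(a), $G+E'$ is globally rigid if and only if $\tlc(G)+E'$ is, and an optimal $E'$ never contains an edge already present in $\tlc(G)$ (deleting such an edge keeps $\tlc(G)+E'$ globally rigid); hence the left-hand side of $(*)$ is unchanged when $G$ is replaced by $\tlc(G)$. I would also check that the two parameters on the right are invariant: by Theorem~\ref{thm:onestep} every edge of $\tlc(G)\setminus E$ joins a weakly globally linked pair, which by Lemma~\ref{kappa} has $\kappa_G\ge 3$, so such an edge can neither merge two components of any $G-\{a,b\}$ nor affect the global rigidity of any $G+K(V-U)$; thus $c(G)=c(\tlc(G))$ and $l(G)=l(\tlc(G))$. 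The two excluded configurations are handled directly: if $\tlc(G)$ is complete then $G$ is already globally rigid and all three expressions are $0$, and if $\tlc(G)$ is a $3$-connected SNGR graph then a single new edge suffices and all three expressions equal $1$. From here on $G$ is rigid totally loose and $T$ has at least two vertices.

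Next I would translate to $T$. By Theorem~\ref{glob rigid augmentation lemma}, $G+E'$ is globally rigid exactly when the tree-edge set $T_G\langle E'\rangle$ makes every $H\in\mathcal H$ lie on a cycle (condition (i)) and every $Q\in\mathcal Q$ a non-cut vertex (condition (ii)). The map $uv\mapsto T_G\langle uv\rangle$ sends one non-edge of $G$ to one tree edge, and conversely any desired tree edge $XY$ is realized by a single non-edge $uv$ of $G$ obtained by choosing $u$ and $v$ among the vertices of the two end $3$-blocks that lie in no incident clique or separator; the $\mathcal S$-leaves removed in passing from $T_G$ to $T$ are attached to $\mathcal H$-vertices and are irrelevant to (i)–(ii). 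Consequently $\min|E'|$ equals the minimum number of edges whose addition to $T$ makes (i) and (ii) hold. I would then record the dictionary identities $c(G)=b_{\mathcal Q}(T)$ and $l(G)=t(T)$: each $2$-separator $Q$ is a $\mathcal Q$-vertex, the components of $G-Q$ biject with the neighbours of $Q$ in $T$ (hence with the components of $T-Q$), and the untied ends of $G$ are exactly the interiors of the leaf $3$-blocks, which biject with the leaves of $T$. These identities show that the two right-hand sides of the theorem agree, so everything reduces to the tree min–max.

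Finally I would prove that the minimum number of edges added to $T$ realizing (i) and (ii) equals $\max\{\lceil t(T)/2\rceil,\ b_{\mathcal Q}(T)-1\}$. The lower bound is straightforward: no $\mathcal Q$-vertex is a leaf, so every leaf of $T$ is either an $\mathcal H$-vertex that (i) forces onto a cycle, or an $\mathcal S$-vertex pendant at some $Q$ that (ii) forces to be reconnected; in either case the leaf must receive an incident new edge, and since each edge covers at most two leaves this gives $\lceil t/2\rceil$, while reconnecting the $b_{\mathcal Q}$ branches at a worst $\mathcal Q$-vertex needs at least $b_{\mathcal Q}-1$ edges. For the matching upper bound I would run an Eswaran--Tarjan/Plesn\'ik-type leaf-pairing: repeatedly join two leaves lying in different branches of a currently worst $\mathcal Q$-vertex, with one extra parity edge if $t$ is odd; each such edge decreases $t$ by two and the relevant $b_{\mathcal Q}$ by one, keeping the count at $\max\{\lceil t/2\rceil,b_{\mathcal Q}-1\}$, exactly as in Theorems~\ref{2-conn aug of graphs} and \ref{2-edgeconn aug of graphs} but with the marking restricted to $\mathcal Q$ and $\mathcal H$.

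I expect the main obstacle to be precisely this last verification. Because only $\mathcal Q$-vertices must become non-cut and only $\mathcal H$-vertices must lie on a cycle — both strictly weaker than full $2$-connectivity, which would additionally penalize high-degree $\mathcal S$- and $\mathcal H$-vertices — the achievability argument must show that a single, carefully chosen leaf-pairing certifies (i) and (ii) \emph{simultaneously} without ever exceeding the bound, i.e. that the relaxed (partial) $2$-edge- and $2$-vertex-connectivity requirements do not interact to force extra edges. Once this is established, realizing the optimal tree edges by graph non-edges as above yields an optimal $E'$ and completes the proof.
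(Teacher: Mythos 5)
Your architecture coincides with the paper's: reduce to $\tlc(G)$ via Lemma \ref{J}, dispose of the complete/SNGR cases, translate the augmentation conditions to the reduced tree $T:=T^r_{\tlc(G)}$ via Theorem \ref{glob rigid augmentation lemma}, identify $l(G)=t(T)$ and $c(G)=b_{\mathcal Q}(T)$, and finish with a tree min--max. The lower bounds and the dictionary identities are argued essentially as in the paper (Lemma \ref{untied sets are leafs} and Corollary \ref{lequals tTG}).

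The genuine gap is exactly the step you flag yourself: the upper bound of the tree min--max for the \emph{relaxed} conditions (only the $\mathcal H$-vertices must lie on cycles, only the $\mathcal Q$-vertices must be non-cut). You assert that an Eswaran--Tarjan-type leaf-pairing ``keeps the count at $\max\{\lceil t/2\rceil,b_{\mathcal Q}-1\}$'' but do not verify that a single pairing certifies (i) and (ii) simultaneously, and without that verification the proof is not complete. The paper never proves this relaxed min--max directly. Instead it proves the upper bound for a \emph{stronger} requirement --- $T+F'$ must be $2$-edge-connected (so every vertex, not just those in $\mathcal H$, ends up on a cycle) and every $Q\in\mathcal Q$ non-cut --- and shows in Lemma \ref{tree 2-edge-connected aug} that even this costs only $\max\{\lceil t(T)/2\rceil, b_{\mathcal Q}(T)-1\}$, by replacing each vertex outside $\mathcal Q$ with a cycle of length $\min(3,\deg_T(v))$ and invoking the known $2$-connectivity augmentation theorem (Theorem \ref{2-conn aug of graphs}) on the resulting graph. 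Since the lower bound $\max\{\lceil l(G)/2\rceil, c(G)-1\}$ for the original problem already equals this value, the inequality chain closes and the exact optimum for the relaxed problem comes for free. Adopting this device (or reproving Eswaran--Tarjan's pairing analysis for the partial requirements) is what is needed to close your gap. A secondary caveat: your claim that \emph{any} tree edge $XY$ is realizable as $T\langle uv\rangle$ for a non-edge $uv$ of $G$ is only guaranteed when $X$ and $Y$ are leaves of $T$, where the private vertex sets $W_X,W_Y$ are nonempty; for internal vertices these sets may be empty. The paper avoids this by first normalizing $F'$ so that every new tree edge joins two leaves.
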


Before proving the $\min \leq \max$ inequality of Theorem \ref{glob rigid aug minmax thm} we prove two lemmas.

\begin{lemma}\label{tree 2-edge-connected aug}
Let $T=(V, F)$ be a tree with $|V|\geq 2$, and let $\mathcal{Q}$ be a subset of $V$. Then 
$$\min\Set*{|F'|\given\longsetdescriptiontwo{
        $T+F'$ is 2-edge-connected, and for all\;\; $Q\in \mathcal{Q}$, $Q$ is not a cut vertex in $T+F'$
      }} =\max\Set*{\bigg\lceil\frac{t(T)}{2}\bigg\rceil, b_{\mathcal{Q}}(T)-1}.$$
\end{lemma}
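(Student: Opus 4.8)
The plan is to prove the two inequalities $\min\ge\max$ and $\min\le\max$ separately, obtaining the easier direction by a direct counting argument and the harder (upper bound) direction by a reduction to the biconnectivity augmentation theorem, Theorem~\ref{2-conn aug of graphs}. Throughout I will use that for a tree $b(T,v)=\deg_T(v)$, so $b_{\mathcal{Q}}(T)=\max_{v\in\mathcal{Q}}\deg_T(v)$, and that a vertex of degree at most $1$ is never a cut vertex, so $\mathcal{Q}$-vertices of degree $\le 1$ impose no constraint.

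For the lower bound I would argue as follows. Any $F'$ for which $T+F'$ is $2$-edge-connected already satisfies $|F'|\ge\lceil t(T)/2\rceil$ by Theorem~\ref{2-edgeconn aug of graphs}. For the second term, fix $Q\in\mathcal{Q}$ with $b(T,Q)=b_{\mathcal{Q}}(T)=:\beta$ and let $C_1,\dots,C_\beta$ be the components of $T-Q$. Since $Q$ is not a cut vertex of $T+F'$, the graph $(T+F')-Q$ is connected; but only the edges of $F'$ avoiding $Q$ survive in it, and each such edge decreases the number of components of the forest on $C_1,\dots,C_\beta$ by at most one. Hence at least $\beta-1$ edges of $F'$ avoid $Q$, so $|F'|\ge\beta-1$, and the lower bound $\min\ge\max\{\lceil t(T)/2\rceil,\,b_{\mathcal{Q}}(T)-1\}$ follows.

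For the upper bound I would reduce to \emph{full} biconnectivity augmentation. Define an auxiliary tree $T'$ from $T$ by a \emph{path blow-up} of every vertex $w\notin\mathcal{Q}$ with $\deg_T(w)\ge 3$: if the branches at such a $w$ are $D_1,\dots,D_m$, replace $w$ by a path $w_1-w_2-\cdots-w_m$ of new vertices and reattach $D_i$ to $w_i$. This leaves all $\mathcal{Q}$-vertices and all leaves untouched, so $t(T')=t(T)$, while every vertex of $T'$ outside $\mathcal{Q}$ now has degree at most $3$, a degree-$3$ vertex occurring only when a blow-up was performed. Consequently $\max_v\deg_{T'}(v)-1$ equals $b_{\mathcal{Q}}(T)-1$ when no blow-up occurs, and is at most $\max\{b_{\mathcal{Q}}(T)-1,\,2\}$ otherwise; since a blow-up forces some vertex to have degree $\ge 3$ and hence $t(T)\ge 3$ and $\lceil t(T)/2\rceil\ge 2$, the extra value $2$ is always absorbed. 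Thus Theorem~\ref{2-conn aug of graphs} gives that the cheapest biconnectivity augmentation of $T'$ has size exactly $\max\{\lceil t(T')/2\rceil,\ \max_v\deg_{T'}(v)-1\}=\max\{\lceil t(T)/2\rceil,\ b_{\mathcal{Q}}(T)-1\}$.

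It then remains to transport a biconnectivity augmentation $F'$ of $T'$ back to $T$: map each edge of $F'$ to $V(T)$ by identifying every blow-up copy $w_i$ with the original $w$ (discarding any loops or parallel edges, which only decreases the size). Now $T+F'$ is precisely the graph obtained from the biconnected graph $T'+F'$ by contracting the connected blow-up paths; since contraction cannot create a bridge, $T+F'$ is $2$-edge-connected, and since each $Q\in\mathcal{Q}$ is uncontracted and $(T'+F')-Q$ is connected (as $T'+F'$ is $2$-connected), its contraction $(T+F')-Q$ is connected too, so no $Q\in\mathcal{Q}$ is a cut vertex. This produces a feasible $F'$ of size $\max\{\lceil t(T)/2\rceil,\ b_{\mathcal{Q}}(T)-1\}$, completing the proof. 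I expect the main point to get right to be the degree bookkeeping for $T'$ --- checking that the degree-$3$ vertices created by the blow-up never push $\max_v\deg_{T'}(v)-1$ above the target (this is exactly where the implication ``a blow-up occurs $\Rightarrow t(T)\ge 3\Rightarrow\lceil t(T)/2\rceil\ge 2$'' is used) --- together with the transport step, where one verifies that contracting the blow-up paths preserves both $2$-edge-connectivity and the non-cut property at every $\mathcal{Q}$-vertex; degenerate cases such as $T=K_2$ (where Theorem~\ref{2-conn aug of graphs} does not directly apply) should be handled separately.
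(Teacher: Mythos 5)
Your proof is correct and follows essentially the same route as the paper: the lower bound by direct counting, and the upper bound by locally blowing up the non-$\mathcal{Q}$ vertices into gadgets so that Theorem~\ref{2-conn aug of graphs} applies, then contracting the gadgets back. The only difference is cosmetic: the paper replaces each such vertex by a cycle (so every new vertex $w$ has $b(T',w)\le 2$ and $b(T')=b_{\mathcal{Q}}(T)$ holds exactly), whereas your path gadget keeps $T'$ a tree at the cost of the extra degree-$3$ absorption argument via $\lceil t(T)/2\rceil\ge 2$; both work.
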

\begin{proof}If $\mathcal{Q}$ does not contain any cut vertices, then we are done %
by Theorem \ref{2-edgeconn aug of graphs}.
 Suppose that $\mathcal{Q}$ contains at least one cut vertex.
 
Suppose that $T+F'$ is 2-edge-connected, and $Q$ is not a cut vertex in $T+F'$ for all $Q\in \mathcal{Q}$. Then $V(F')$ must contain the set of leaves, hence $|F'|\geq \lceil t(T)/2\rceil$. Besides, $F'$ must induce a connected subgraph on the components $G-Q$, and thus $|F|\geq b(T,Q)$, for every $Q\in \mathcal{Q}$. The $\min \geq \max$ inequality follows. 

To prove the other direction, for every vertex $v\in V-\mathcal{Q}$, replace $v$ by a cycle of length $\min(3, \deg_T(v))$, and connect the neighbours of $v$ to different vertices in the cycle. Let this new graph be denoted by $T'$. 
 It is easy to see that $b_\mathcal{Q}(T)=b(T')$ and $t(T)=t(T')$. Thus, by Theorem \ref{2-conn aug of graphs}, $T'$ can be made 2-connected by adding $\max\Set*{\lceil t(T)/2\rceil, b_{\mathcal{Q}}(T)-1}$ edges. By contracting the cycles we obtain a 2-edge-connected supergraph of $T$, in which $Q$ is not a cut vertex for all $Q\in \mathcal{Q}$.
\end{proof}

Let $G=(V,E)$ be a graph and $\tlc (G)$ be the totally loose closure of $G$. For a leaf $X$ of $T_{\tlc (G)}^r=(\mathcal{H}\cup \mathcal{S}\cup \mathcal{Q}, F)$, let $W_X$ be the set of those vertices of $X$ that do not belong to (the subgraph represented by) any other vertex of $T_{\tlc (G)}^r$. Note that for every leaf $X$ of $T_{\tlc(G)}^r$,  $X\in \mathcal{H}\cup \mathcal{S}$ and $W_X$ is non-empty. 
\begin{lemma}\label{untied sets are leafs}
Let $G=(V,E)$ be a graph and $\tlc (G)$ be the totally loose closure of $G$. Suppose that $\tlc (G)$ is not a complete graph or a 3-connected SNGR graph. Then $U\subseteq V$ is untied in $G$ if and only if there is a leaf $X$ of $T_{\tlc (G)}^r$ with $W_{X}\subseteq U$. Hence, the set of the untied ends of $G$ is $\{W_X:X$ is a leaf of $T_{\tlc (G)}^r\}$. 
\end{lemma}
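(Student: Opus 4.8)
The plan is to characterize untied ends of $G$ via the reduced tree representation $T_{\tlc(G)}^r$. Recall that $U \subseteq V$ is untied iff $G + K(V-U)$ is not globally rigid, and an untied end is a minimal such set. Since global rigidity depends only on the totally loose closure by Lemma~\ref{J}(a), I would immediately pass to $\tlc(G)$ and work entirely with its tree representation. The claim to prove is a clean dictionary: untied sets correspond to leaves $X$ of $T_{\tlc(G)}^r$ via the condition $W_X \subseteq U$.

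First I would translate the untied condition through Theorem~\ref{glob rigid augmentation lemma}. Set $\bar G = \tlc(G)$ and let $E' = K(V - U)$; then $G + E'$ is globally rigid iff $\bar G + E'$ is, which by Theorem~\ref{glob rigid augmentation lemma} happens iff conditions (i) and (ii) hold for $T_{\bar G}\langle E'\rangle$. The key observation is that when $E' = K(V-U)$ consists of \emph{all} pairs outside $U$, the corresponding edges $T_{\bar G}\langle E'\rangle$ collapse the subtree spanned by the tree-nodes meeting $V - U$ into an effectively fully-connected blob, so the only way conditions (i)--(ii) can \emph{fail} is if some leaf of $T_{\bar G}^r$ lies entirely ``inside'' $U$, i.e. its private vertex set $W_X$ is untouched by $V - U$. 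Concretely, I would argue that $G + K(V-U)$ fails to be globally rigid precisely when there is a leaf $X$ with $W_X \subseteq U$: if such a leaf exists, the node $X$ either violates (i) (it is an $\mathcal{H}$-leaf lying on no cycle) or creates a cut vertex violating (ii), because no added edge of $K(V-U)$ can reach the private part $W_X$; conversely if every leaf has $W_X \not\subseteq U$, then $V-U$ meets every leaf, which is exactly enough for $K(V-U)$ to supply the cycles and kill the cut vertices required by (i)--(ii).

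For the forward direction I would take an untied $U$ and, using that $V - U$ must miss some leaf's private set (otherwise the tree-augmentation conditions would be satisfiable and $G + K(V-U)$ globally rigid), produce a leaf $X$ with $W_X \subseteq U$. For the backward direction I would take a leaf $X$, set $U \supseteq W_X$, and exhibit the failure of global rigidity directly: since the edges of $K(V - U)$ never have an endpoint in $W_X$, the edge-set $T_{\bar G}\langle K(V-U)\rangle$ cannot place $X$ on any cycle nor prevent its neighbouring $\mathcal{Q}$-node from being a cut vertex, so Theorem~\ref{glob rigid augmentation lemma} is violated. The hypothesis that $\tlc(G)$ is neither complete nor a single $3$-connected SNGR graph guarantees $T_{\bar G}^r$ has at least two leaves, so this degenerate-tree case (where the whole graph is one block) is excluded and the leaf structure is genuinely informative. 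Finally, minimality gives the ``Hence'' clause: the untied ends are exactly the minimal untied sets, and since $U$ is untied iff it contains some $W_X$, the minimal such sets are precisely the $W_X$ themselves.

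\textbf{The main obstacle} I anticipate is the careful bookkeeping in establishing that $T_{\bar G}\langle K(V-U)\rangle$ behaves as a ``complete'' connector on the tree-nodes that $V - U$ meets --- i.e. verifying that adding all cross-edges in $K(V-U)$ translates, under the correspondence $uv \mapsto T_{\bar G}\langle uv\rangle$, into enough tree-edges to satisfy (i) and (ii) for every node \emph{except} those leaves whose private vertices are swallowed by $U$. This requires unpacking the definition of $T_{\bar G}\langle uv\rangle$ (the shortest path between the subtrees $T^u, T^v$) and checking that for each relevant pair of tree-nodes there is a choice of $u,v \in V - U$ realizing the needed tree-edge; the role of $W_X$ being nonempty for every leaf, noted just before the lemma, is exactly what makes this work at the leaves.
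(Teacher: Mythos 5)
Your proposal is correct and, for the harder direction (untied $\Rightarrow$ some $W_X\subseteq U$), follows the paper's argument exactly: prove the contrapositive by picking a representative of $V-U$ in each leaf's private set $W_X$, translating the resulting complete graph on representatives into tree-edges, and invoking Theorem~\ref{glob rigid augmentation lemma} to conclude that $G+K(V-U)$ is globally rigid. Where you diverge is the converse direction. The paper shows that each $W_X$ is untied by a direct structural argument: for an $\mathcal{S}$-leaf, $X$ is a $3$-block with $|N_G(W_X)|=2$, so $G+K(V-W_X)$ fails $3$-connectivity; for an $\mathcal{H}$-leaf, $N_G(W_X)$ is a clique by Lemma~\ref{glpf}(a), so $G+K(V-W_X)$ is a clique sum and a non-adjacent pair meeting $W_X$ stays globally loose. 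You instead run Theorem~\ref{glob rigid augmentation lemma} in reverse, arguing that no edge $uv$ of $K(V-U)$ with $u,v\notin W_X$ can produce a tree-edge $T\langle uv\rangle$ that puts the leaf $X$ on a cycle or repairs its neighbouring $\mathcal{Q}$-node. This is more uniform but shifts the burden onto exactly the bookkeeping you flag: one must check that $V(X)\setminus W_X$ is contained in the unique $T^r$-neighbour of $X$ (so that for any $u\in V(X)\setminus W_X$ the subtree $T^u$ already reaches past $X$, forcing the endpoint of the shortest path to lie at or beyond that neighbour, and forcing any pair inside $V(X)\setminus W_X$ to be adjacent), and that edges ending at pendant $T_G$-leaves hanging off $X$ likewise require an endpoint in $W_X$. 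These verifications do go through, so your route is sound; the paper's direct argument simply avoids them. Your reading of the hypothesis (it guarantees $T^r$ has at least two leaves, excluding the degenerate one-node tree) and your derivation of the ``Hence'' clause from minimality both match the paper.
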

\begin{proof}
By Lemma $\ref{J}$ a set is untied in $G$ if and only if it is untied in $\tlc (G)$, thus we can assume that $G=\tlc (G)$. Let $X$ be a leaf of $T_{G}^r$. If $X\in \mathcal{S}$,
 then $X$ is a 3-block of $G$ and $|N_G(W_X)|=2$; hence, $W_X$ is untied. If $X\in \mathcal{H}$, then $W_X$ is untied again, since $N_{G}(W_X)$ is a clique by Lemma \ref{glpf}(a). If a set is untied, then all its supersets are untied, which completes the proof of the “if" direction.

To prove the “only if" direction let $U\subseteq V$ be a set for which $W_X\not\subseteq U$ for every leaf $X$ of $T_{ G}^r$. For a leaf $X$ let $w_X\in W_X-U$. Let $E'$ be the edges of the complete graph on the vertex set $\{w_X: X$ is a leaf of $T_{G}^r\}$. %
Then $T^r_{G}+T_G\langle E'\rangle$ is 2-connected. Since $T_G^r$ is obtained from $T_G$ by deleting vertices that belong to $\mathcal{S}_G$, it follows that the conditions of Theorem \ref{glob rigid augmentation lemma} are fulfilled; $G+E'$ is globally rigid, an thus so is $G+K(V-U)$. Therefore $U$ is not untied.
\end{proof}

\begin{corollary}\label{lequals tTG}
Let $G$ be a graph and suppose that $\tlc(G)$ is not a complete graph or a 3-connected SNGR graph. Then the untied ends of $G$ are pairwise disjoint and $l(G)=t(T_{\tlc (G)}^r)$.
\end{corollary}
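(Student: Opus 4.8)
The plan is to derive this corollary almost entirely from Lemma~\ref{untied sets are leafs}, which (under the standing hypothesis that $\tlc(G)$ is neither complete nor a $3$-connected SNGR graph) identifies the untied ends of $G$ with precisely the sets $W_X$ ranging over the leaves $X$ of $T^r_{\tlc(G)}$, and asserts that each such $W_X$ is non-empty. So the whole statement reduces to two elementary facts about this finite family $\{W_X\}$: that its members are pairwise disjoint, and that the maximum size of a disjoint family of untied \emph{sets} equals its cardinality $t(T^r_{\tlc(G)})$.

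First I would prove disjointness directly from the definition of $W_X$. By definition $W_X$ consists of those vertices of (the subgraph represented by) $X$ that lie in \emph{no other} vertex of $T^r_{\tlc(G)}$. Hence if $X$ and $X'$ are distinct leaves and some vertex $w$ lay in $W_X\cap W_{X'}$, then $w$ would belong both to $X$ and to $X'\neq X$, contradicting the defining property of $W_X$. Thus $W_X\cap W_{X'}=\emptyset$ whenever $X\neq X'$, and since the untied ends are exactly these sets by Lemma~\ref{untied sets are leafs}, the untied ends are pairwise disjoint.

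It remains to sandwich $l(G)$. The inequality $l(G)\ge t(T^r_{\tlc(G)})$ is immediate: the untied ends themselves form a family of $t(T^r_{\tlc(G)})$ pairwise disjoint untied sets, by the previous paragraph. For the reverse inequality, I would take any family $U_1,\dots,U_k$ of pairwise disjoint untied sets. By Lemma~\ref{untied sets are leafs} each $U_i$ contains some $W_{X_i}$ with $X_i$ a leaf of $T^r_{\tlc(G)}$. If $X_i=X_j$ for $i\neq j$, then the non-empty set $W_{X_i}=W_{X_j}$ would be contained in $U_i\cap U_j=\emptyset$, a contradiction; so the leaves $X_1,\dots,X_k$ are distinct and $k\le t(T^r_{\tlc(G)})$. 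Combining the two bounds gives $l(G)=t(T^r_{\tlc(G)})$.

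I do not anticipate a genuine obstacle here: once Lemma~\ref{untied sets are leafs} is in hand, the corollary is a short counting argument, and the only point requiring care is the use of the non-emptiness of each $W_X$ (guaranteed by that lemma) to convert a family of disjoint untied sets into a family of distinct leaves.
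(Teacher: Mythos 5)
Your proof is correct and follows exactly the intended route: the paper leaves this corollary without an explicit proof, treating it as an immediate consequence of Lemma~\ref{untied sets are leafs} together with the disjointness and non-emptiness of the sets $W_X$ (the latter noted just before that lemma), which is precisely the short counting argument you give.
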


\begin{proof}[Proof of Theorem \ref{glob rigid aug minmax thm}]
If $\tlc (G)$ is a complete or a 3-connected SNGR graph, the theorem is easy to check. Suppose that this is not the case. We claim that the following inequalities hold.
\begingroup
\allowdisplaybreaks
\begin{align*}
       \min&\Set{|E'|\given G+E' \text{ is globally rigid}} \\[1.5ex]
       \mathrel{\mathop{=}\limits^{(1)}}\min&\Set{|E'|\given \tlc (G)+E' \text{ is globally rigid}} \\[1.5ex]
 	\mathrel{\mathop{\le}\limits^{(2)}}  \min&\Set*{|F'|\given\longsetdescription{
        $T_{\tlc (G)}^r+F'$ is 2-edge-connected, and for all $Q\in \mathcal{Q}$, $Q$ is not a cut vertex in $T_{\tlc (G)}^r+F'$
      }}  \\[1.5ex]
    \mathrel{\mathop{=}\limits^{(3)}} \max&\Set*{\bigg\lceil\frac{t\big(T_{\tlc (G)}^r\big)}{2}\bigg\rceil, b_{\mathcal{Q}}\big(T_{\tlc (G)}^r\big)-1} \\[1.5ex]
        \mathrel{\mathop{=}\limits^{(4)}} \max&\Set*{\bigg\lceil\frac{l(G)}{2}\bigg\rceil, c(G)-1}\\[1.5ex]
               \mathrel{\mathop{\le}\limits^{(5)}}\min&\Set{|E'|\given G+E' \text{ is globally rigid}} 
\end{align*}
\endgroup
(1) follows from Lemma \ref{J}, (3) follows from Lemma \ref{tree 2-edge-connected aug}, (4) follows from Corollary \ref{lequals tTG}, (5) was noted before Theorem \ref{glob rigid aug minmax thm}. (2) remains to be proven.

 Suppose that $T_{\tlc (G)}^r+F'$ is 2-edge-connected, and $Q$ is not a cut vertex in $T_{\tlc (G)}^r+F'$ for all $Q\in \mathcal{Q}$. We give an edge set $E'$ of size $|F'|$ for which $\tlc (G)+E'$ is globally rigid. We may assume that for all $f\in F'$, $f$ connects two different leaves of $T_{\tlc (G)}^r$. 
For $f=XY\in F$, let $u_f\in W_{X}$ and let $v_f\in W_{Y}$;
then $T_{\tlc(G)}\langle u_fv_f\rangle=XY$. Let $E'=\{u_fv_f\}_{f\in F'}$; then $T_{\tlc(G)}\langle E'\rangle=F'$.
It follows that the conditions of Theorem \ref{glob rigid augmentation lemma} are fulfilled; $E'$ is an edge set of size $|F'|$ for which $\tlc (G)+E'$ is globally rigid.

Hence, all the values of the inequality chain are equal.
\end{proof}

\section{Chordal graphs}
\label{sec:chordal}

Let $d\geq 1$ be fixed.
In this subsection we give a 2-approximation algorithm for the cheapest globally rigid supergraph problem for $d$-connected chordal input graphs in $\R^d$.  As we will see, this problem is equivalent to finding a minimum cost set of new edges that make the given $d$-connected chordal graph $(d+1)$-connected.
It is well-known that
 a graph $G$ is chordal if and only if its vertices have an ordering such that for each vertex $v$, the vertices that are adjacent to  $v$ and precede $v$ in the ordering induce a clique in $G$.
This fact easily implies the following characterization of $d$-connected chordal graphs.
 
 \begin{lemma}
 A graph $G=(V,E)$ is a $d$-connected chordal graph if and only if $G$ is the clique sum of complete graphs along cliques of size at least $d$.
 \end{lemma}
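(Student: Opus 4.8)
The plan is to prove both implications by exploiting the clique-sum decomposition of chordal graphs, with the $d$-connectivity controlling the sizes of the summing cliques. The common starting point is the fact — immediate from the perfect elimination ordering recalled just above, or equivalently from Dirac's theorem — that a chordal graph is either complete or has a minimal vertex separator that induces a clique.

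For the ``if'' direction, I would induct on the number of clique-sum operations and show that each operation preserves both chordality and $d$-connectivity. Preservation of chordality is routine: if $G$ is the clique sum of chordal graphs $G_1,G_2$ along a clique $S$, then any chordless cycle of length at least four would have to cross the separator $S$, hence pass through two vertices of the clique $S$ and therefore admit a chord, a contradiction. For $d$-connectivity, suppose $G_1,G_2$ are $d$-connected and $|S|\ge d$, and let $X$ be a vertex set with $|X|<d$. Since $|S|\ge d>|X|$, some $s\in S\setminus X$ survives; because $G_1-X$ and $G_2-X$ are each connected (using $|X\cap V(G_i)|<d$) and both contain $s$, their union $G-X$ is connected. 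Combined with the base case — a single complete graph on at least $d+1$ vertices is $d$-connected and chordal — this gives the ``if'' direction.

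For the ``only if'' direction, I would induct on $|V|$. If $G$ is complete there is nothing to prove, so assume $G$ is not complete; then $G$ has a minimal separator $S$, which is a clique by Dirac's theorem, and $|S|\ge d$ since $G$ is $d$-connected and $S$ is a separator. Writing $C_1$ for one component of $G-S$ and setting $G_1=G[V(C_1)\cup S]$ and $G_2=G[V\setminus V(C_1)]$, the graph $G$ is the clique sum of $G_1$ and $G_2$ along $S$, and both factors are induced subgraphs of $G$, hence chordal, and have strictly fewer than $|V|$ vertices (and at least $d+1$ of them). The crux — and the step I expect to be the main obstacle — is to show that $G_1$ and $G_2$ are again $d$-connected, so that the induction hypothesis applies. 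I would establish this by a shortcutting argument: given $X$ with $|X|<d$ and two vertices $u,v\in V(G_1)\setminus X$, take a $u$--$v$ path in the $d$-connected graph $G-X$; whenever this path leaves $G_1$ it must re-enter through $S$, and since $S\setminus X$ is a nonempty clique, every such excursion outside $G_1$ can be replaced by a single edge inside $S\setminus X$, yielding a $u$--$v$ walk that stays in $G_1-X$. Thus $G_1-X$ is connected, and symmetrically $G_2-X$ is connected. Applying the induction hypothesis to $G_1$ and $G_2$ expresses each as a clique sum of complete graphs along cliques of size at least $d$, and summing these two expressions along $S$ (again of size at least $d$) completes the proof.
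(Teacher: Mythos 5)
Your proof is correct. Note, however, that the paper does not actually prove this lemma: it merely recalls the perfect elimination ordering characterization of chordal graphs and asserts that the lemma ``easily'' follows. The route the surrounding text (and the subsequent proof of the lemma on $\tlc_d(G)$) suggests is to peel off a simplicial vertex $v$: its neighbourhood $N_G(v)$ is a clique of size at least $d$ (since $\deg(v)\ge d$), the graph $G-v$ is again a $d$-connected chordal graph because any small separator of $G-v$ misses some vertex of the clique $N_G(v)$ and hence would separate $G$ as well, and $G$ is the clique sum of $G-v$ and the complete graph on $N_G(v)\cup\{v\}$ along $N_G(v)$. You instead split along a minimal clique separator and prove $d$-connectivity of both sides by the shortcutting argument through $S\setminus X$. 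Both arguments are sound; the simplicial-vertex version is marginally shorter because only one of the two factors is nontrivial at each step, while your version has the advantage of producing the clique-sum decomposition directly in the balanced form used elsewhere in Section 8 (e.g.\ in the tree representation $T_G$ over $\mathcal{S}_G\cup\mathcal{Q}_G$). Your treatment of the ``if'' direction (chordality via the two non-consecutive separator vertices on a long cycle, connectivity via a surviving vertex of $S\setminus X$) is also correct and is the part the paper omits entirely. The only caveat, which afflicts the lemma as stated rather than your proof, is the implicit assumption that each complete summand has at least $d+1$ vertices so that $d$-connectivity is meaningful; you handle this reasonably in your base case.
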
  

We call this the clique sum decomposition of $G$. 
The following lemma describes the totally loose closures of $d$-connected chordal graphs.

\begin{lemma}\label{chordal char}
Let $G$ be a $d$-connected chordal graph. Then $\tlc_d(G)$ is the clique sum of complete graphs along cliques of size exactly $d$. 
\end{lemma}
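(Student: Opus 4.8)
The plan is to argue by induction on the number $m$ of maximal cliques in the clique sum decomposition of $G$ furnished by the preceding lemma, so that $G$ is a clique sum of complete graphs along cliques of size at least $d$, organised along a clique tree $T$. The argument rests on one auxiliary claim, proved first by its own easy induction: any clique sum of complete graphs along cliques of size exactly $d$ is totally loose. Indeed, peeling off a leaf clique $C$ glued along a size-$d$ clique $S$, a non-adjacent pair lying inside the rest is globally loose by Lemma \ref{cliquesum} and induction, while a pair with one endpoint in $C\setminus S$ and the other outside is separated by the $d$-element set $S$ and is globally loose by Lemma \ref{kappa}. This claim yields the terminal case: if every separator of $T$ has size exactly $d$, then $G$ is already totally loose and $\tlc_d(G)=G$ has the required form.

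For the recursive case, suppose some edge of $T$ joins maximal cliques $A$ and $B$ along a separator $S=A\cap B$ with $|S|>d$. The crux is to show that every cross pair $u\in A\setminus S$, $v\in B\setminus S$ is weakly globally linked in $G$, so that $uv\in \tlc_d(G)$. I would prove this with Lemma \ref{coro}. Fix $s\in S$ and put $V_0=(S\setminus\{s\})\cup\{u,v\}$. Since $|S\setminus\{s\}|\ge d$ and, $A$ and $B$ being cliques, both $u$ and $v$ are adjacent to all of $S\setminus\{s\}$, the graph $G[V_0]$ is a complete graph on $|S|+1$ vertices with only the edge $uv$ missing; building it up by adding $u$ and then $v$, each of degree at least $d$, to the complete graph on $S\setminus\{s\}$ shows it is rigid. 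As $u$-$s$-$v$ is a $uv$-path internally disjoint from $V_0$, Lemma \ref{coro} gives that $\{u,v\}$ is weakly globally linked. This is precisely where the hypothesis $|S|>d$ is used: for $|S|=d$ the set $S\setminus\{s\}$ would have only $d-1$ vertices and $G[V_0]$ would fail to be rigid.

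Consequently all cross edges over $S$ lie in $\tlc_d(G)$, so that $A\cup B$ spans a clique there. Let $G''$ be the graph obtained from $G$ by completing $A\cup B$ into a single clique. Using the running intersection property of $T$, every other separator is unchanged, so $G''$ is again a clique sum of complete graphs along cliques of size at least $d$ --- its clique tree is $T$ with the edge $AB$ contracted --- hence a $d$-connected chordal graph with $m-1$ maximal cliques. A two-line double inclusion then gives $\tlc_d(G)=\tlc_d(G'')$: from $G\subseteq G''$ and the minimality of $\tlc_d(G)$ we get $\tlc_d(G)\subseteq \tlc_d(G'')$, while $G''\subseteq \tlc_d(G)$ together with the fact that $\tlc_d(G)$ is totally loose and the minimality of $\tlc_d(G'')$ give the reverse inclusion. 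By the induction hypothesis $\tlc_d(G'')$, and therefore $\tlc_d(G)$, is a clique sum of complete graphs along cliques of size exactly $d$.

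I expect the main obstacle to be not the rigidity computation but the structural bookkeeping in the reduction: one must check carefully, via the running intersection property, that completing $A\cup B$ genuinely contracts the clique tree and leaves all remaining separators of size at least $d$, and one must resist appealing to any monotonicity of the totally loose closure (which can fail in general) and instead derive $\tlc_d(G)=\tlc_d(G'')$ purely from the minimality defining the closure.
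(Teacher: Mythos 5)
Your proof is correct, but it takes a genuinely different route from the paper's. The paper inducts on $|V|$: it peels off a simplicial vertex $v$, applies the induction hypothesis to $G-v$, and then argues that $v$ either attaches along a size-$d$ clique (creating a new piece of the decomposition) or, when $|N_G(v)|\geq d+1$, gets absorbed into the unique piece of the decomposition of $\tlc_d(G-v)$ that contains $N_G(v)$. You instead induct on the number of cliques in the decomposition, locate an edge of the clique tree whose separator $S$ exceeds size $d$, and merge its two endpoints; your Lemma \ref{coro} computation (the rigid set $(S\setminus\{s\})\cup\{u,v\}$ together with the path $u$--$s$--$v$) is an explicit version of the weak-global-linkedness argument that the paper leaves implicit in its ``replace $S$ with a complete graph on $V(S)\cup\{v\}$'' step, and your auxiliary claim that clique sums along cliques of size exactly $d$ are totally loose (via Lemmas \ref{cliquesum} and \ref{kappa}) is likewise a verification the paper omits. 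What your approach buys is that both inclusions in $\tlc_d(G)=\tlc_d(G'')$ are derived cleanly from the minimality defining the closure; what it costs is the extra bookkeeping with the running intersection property of the clique tree, needed to check that $G$ has no edges between $A\setminus S$ and $B\setminus S$, that $C\cap(A\cup B)=C\cap A$ for every other clique $C$ on the $A$-side, and hence that contracting the edge $AB$ yields a valid decomposition of $G''$ with all remaining separators of size at least $d$ --- a standard fact about chordal graphs, but one the paper avoids by working from the perfect elimination ordering instead. One cosmetic remark: your closing caveat about monotonicity of the totally loose closure is unnecessary, since monotonicity does hold by exactly the minimality argument you give; as you derive both inclusions directly, nothing is affected.
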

\begin{proof}
The proof is by induction on $|V|$. If $|V|=d+1$, then $G$ is complete and thus the statement is obvious.
Suppose that $|V|\geq d+2$ and let $v\in V$ such that $N_G(v)$ induces a clique in $G$. Let $U=V-\{v\}$. Then $G[U]$ is a $d$-connected chordal graph. Hence, by induction, $\tlc_d(G[U])$ is the clique sum of complete graphs along cliques of size $d$.
If $|N_G(v)|=d$, then $\tlc_d(G)$ can be obtained from $\tlc_d(G[U])$ by adding a complete graph with vertex set $N_G(v)\cup \{v\}$ along the clique $N_G(v)$, and thus the statement follows. Suppose that $|N_G(v)|\geq d+1$. There is a complete graph $S$ in the clique sum decomposition of $\tlc_d(G[U])$ with $ N_G(v)\subseteq V(S)$. If we replace  $S$ with a complete graph on $V(S)\cup \{v\}$ in the clique sum decomposition of $\tlc_d(G[U])$, then we obtain the clique sum decomposition of $\tlc_d(G)$, which completes the proof.
\end{proof}

Lemma \ref{chordal char} implies that the totally loose closure of a $d$-connected chordal graph is also chordal.
Let $G$ be a totally loose $d$-connected chordal graph in $\R^d$.  Let $\mathcal{S}_G$ denote the set of the maximal cliques of $G$ and let $\mathcal{Q}_G$ denote the the $d$-separators of $G$. We define the {\it tree representation} $T_{G}$ as the graph $(\mathcal{S}_G\cup \mathcal{Q}_G,F)$ in which for $Q\in \mathcal{Q}_G, S\in \mathcal{S}_G$, we have $QS\in F$ if and only if $S$ contains $Q$.
For a subgraph $T_0$ of $T_G$ let $G[T_0]$ denote the subgraph of $G$ obtained by taking the union of the subgraphs of $G$ corresponding to the vertices of $T_0$. 
The following lemma is easy to prove by induction on the number of clique sums in the clique sum decomposition \mbox{of $G$}.

\begin{lemma}
Let $G=(V,E)$ be a totally loose $d$-connected chordal graph in $\R^d$. Then the tree representation $T_G=(\mathcal{S}_G\cup \mathcal{Q}_G,F)$ is a tree. For every subtree $T_0$ of $T_G$, $G[T_0]$ is an induced subgraph of $G$ that can be obtained from the set of graphs $V(T_0)$ by clique sum operations between the (subgraphs represented by) adjacent vertices of $T_0$. Furthermore, $G[T_G]=G$.
\end{lemma}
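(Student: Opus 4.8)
The plan is to induct on the number $k$ of clique sum operations in the clique sum decomposition of $G$; since $G$ is totally loose, Lemma \ref{chordal char} guarantees that this decomposition glues complete graphs along cliques of size exactly $d$. For $k=0$ the graph $G$ is a single complete graph, so $\mathcal{S}_G=\{G\}$ and $\mathcal{Q}_G=\emptyset$; hence $T_G$ is a single vertex, which is a tree with $G[T_G]=G$, and the statement about subtrees is vacuous. Throughout I will use that in a $d$-connected chordal graph every $d$-separator is a minimal separator and hence a clique, so the vertices of $T_G$ recorded by $\mathcal{Q}_G$ are precisely the $d$-cliques that disconnect $G$.

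For the inductive step I would peel off the last clique sum and write $G$ as the clique sum of $G'$ and a complete graph $K$ along the $d$-clique $Q=V(G')\cap V(K)$, where $G'$ arises from $k-1$ clique sums. As an induced subgraph of the totally loose graph $G$ that is itself the clique sum of complete graphs along $d$-cliques, $G'$ is again a totally loose $d$-connected chordal graph, so the induction hypothesis applies and $T_{G'}$ is a tree. Since $V(G')\setminus Q$ and $V(K)\setminus Q$ are both non-empty and are separated by $Q$, I first record how the pieces change: the new vertices $V(K)\setminus Q$ are adjacent only to $Q$ and to one another, so $K$ is a maximal clique of $G$, every maximal clique of $G'$ remains maximal in $G$, and no new maximal clique arises, giving $\mathcal{S}_G=\mathcal{S}_{G'}\cup\{K\}$. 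Likewise $Q$ becomes a $d$-separator of $G$, every $d$-separator of $G'$ stays one, and none is created or destroyed, giving $\mathcal{Q}_G=\mathcal{Q}_{G'}\cup\{Q\}$, where $Q$ may already lie in $\mathcal{Q}_{G'}$.

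With this bookkeeping, treeness of $T_G$ follows by exhibiting $T_G$ as $T_{G'}$ with $K$ attached as a pendant. If $Q\in\mathcal{Q}_{G'}$, then $Q$ is already a vertex of $T_{G'}$ and we merely add the vertex $K$ together with the single edge $QK$, attaching a leaf to a tree. If $Q\notin\mathcal{Q}_{G'}$, then $Q$ does not separate $G'$, and I would invoke the standard chordal-graph fact that a clique contained in two distinct maximal cliques must separate the graph; consequently $Q$ lies in a \emph{unique} maximal clique $S'$ of $G'$, and $T_G$ is obtained from $T_{G'}$ by appending the path $S'-Q-K$ with the two new vertices $Q,K$. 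In either case a tree results (one also checks that $Q$ is the only $d$-separator contained in $K$, so $K$ genuinely becomes a leaf). The auxiliary statement propagates in the same induction: for a subtree $T_0$ of $T_G$, if $K\notin V(T_0)$ then $G[T_0]$ is handled by the hypothesis applied to $T_{G'}$, while if $K\in V(T_0)$ then $G[T_0]$ is the clique sum of $G[T_0\cap T_{G'}]$ and $K$ along $Q$, which is an induced subgraph of $G$. Finally $G[T_G]=G$ because every edge of a chordal graph lies in a maximal clique, so the union of the subgraphs indexed by $V(T_G)$ recovers all of $G$.

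I expect the main obstacle to be the inductive bookkeeping, and specifically the case $Q\notin\mathcal{Q}_{G'}$: one must ensure that attaching $K$ creates no cycle, which reduces exactly to verifying that $Q$ is contained in a unique maximal clique of $G'$ whenever $Q$ fails to separate $G'$. This is the running-intersection (clique-tree) property of chordal graphs, and once it is isolated the remaining verifications — that $\mathcal{S}_G$ and $\mathcal{Q}_G$ split as claimed, that $K$ attaches as a leaf, and that the subtree/clique-sum statement carries over — are routine.
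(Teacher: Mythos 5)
Your overall strategy --- induction on the number of clique sums in the decomposition guaranteed by Lemma \ref{chordal char}, peeling off the last complete block $K$ glued to $G'$ along a $d$-clique $Q$ --- is exactly the induction the paper has in mind (the paper only remarks that the lemma ``is easy to prove by induction on the number of clique sums'' and gives no further details), and your bookkeeping of $\mathcal{S}_G$ and $\mathcal{Q}_G$ is correct. The one step that does not hold up as written is the justification of the point you yourself isolate as the crux: the claim that ``a clique contained in two distinct maximal cliques must separate the graph'' is \emph{false} for general chordal graphs. For instance, $K_5$ minus an edge is chordal with maximal cliques $S_1=\{1,2,3,4\}$ and $S_2=\{1,2,3,5\}$, and the clique $\{1,2\}\subseteq S_1\cap S_2$ does not separate it; the running intersection property only tells you that $Q$ lies in every maximal clique on the $S_1$--$S_2$ path of a clique tree, not that $Q$ itself separates.

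What saves the argument is the extra structure you have but do not invoke: $|Q|=d$ and $G'$ is an iterated clique sum of complete graphs along cliques of size \emph{exactly} $d$. A parallel induction shows that any two distinct maximal cliques of such a graph intersect in at most $d$ vertices, and that an intersection of size exactly $d$ is one of the gluing cliques, hence a $d$-separator. (Equivalently: every minimal separator of $G'$ has size exactly $d$, so a $d$-clique $Q$ contained in the separator $S\cap S'$ of two clique-tree-adjacent maximal cliques must equal it.) Thus $Q\subseteq S_1\cap S_2$ forces $Q=S_1\cap S_2\in\mathcal{Q}_{G'}$, the case ``$Q\notin\mathcal{Q}_{G'}$ but $Q$ lies in two maximal cliques of $G'$'' is vacuous, and $K$ indeed attaches as a pendant path $S'$--$Q$--$K$. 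With that substitution the proof goes through; the remaining loose end (a subtree $T_0$ containing $Q$ but not $K$ when $Q\notin\mathcal{Q}_{G'}$, for which $G[T_0]$ involves only a degenerate clique sum with the clique $Q$ itself) is harmless.
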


We will need the following $d$-dimensional version of Lemma \ref{separ in tloose subgraph}(c). 

\begin{lemma}\label{separd}
Let $G$ be a $(d+1)$-connected totally loose graph in $\R^d$. Suppose that $S_1$ and $S_2$ are %
cliques in $G$ with $|V(S_1)\cap V(S_2)|\geq d$. Then $V(S_1)\cup V(S_2)$ also induces a clique in $G$.
\end{lemma}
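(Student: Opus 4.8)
The plan is to study the induced subgraph $H=G[V(S_1)\cup V(S_2)]$ and to prove that it is globally rigid, exploiting the general principle that a globally rigid induced subgraph of a totally loose graph is automatically complete. Indeed, if $H=G[W]$ is globally rigid, then for any generic realization $(G,p)$ and any equivalent $(G,q)$ the restriction $(H,q|_W)$ is equivalent to the generic framework $(H,p|_W)$ and hence congruent to it; thus all pairwise distances inside $W$ are preserved, so every pair in $W$ is globally linked in $(G,p)$, and total looseness forces $W$ to induce a clique. So it suffices to show $H$ is globally rigid. Write $C=V(S_1)\cap V(S_2)$, $A=V(S_1)\setminus V(S_2)$, $B=V(S_2)\setminus V(S_1)$; I may assume $A,B\neq\emptyset$, since otherwise $H$ equals $S_1$ or $S_2$ and is already complete. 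Note that $H$ is rigid: $S_1$ is a clique, hence rigid, and every vertex of $B$ is joined to all $|C|\ge d$ vertices of $C$, so it can be added with degree at least $d$, which preserves rigidity.

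First I would treat the case $|C|\ge d+1$. Here $C$ induces a complete graph on at least $d+1$ generic points, which are affinely independent and therefore affinely span $\R^d$; after normalising an equivalent realization $(H,q)$ by an isometry so that it agrees with $(H,p)$ on $C$, each vertex of $A\cup B$ is joined to all of $C$ and is thus pinned, being the unique point at the prescribed distances from $d+1$ affinely independent points. Hence $q=p$ on $V(H)$ and $H$ is globally rigid, so $V(S_1)\cup V(S_2)$ induces a clique.

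The remaining case $|C|=d$ carries the main difficulty, because now $C$ spans only a hyperplane $\Pi=\mathrm{aff}(C)$, so a vertex joined only to $C$ is determined solely up to the reflection $\sigma$ across $\Pi$. The first step is to produce an edge between $A$ and $B$. If there were none, then every edge of $H$ lies inside $A\cup C$ or inside $B\cup C$, so $C$ separates $A$ from $B$ in $H$; since $H$ is a rigid induced subgraph, the argument of Lemma~\ref{separ in tloose subgraph}(a) --- which uses only Lemma~\ref{coro} and total looseness, and is therefore valid in every dimension --- shows that $C$ separates $G$ as well, contradicting $(d+1)$-connectivity since $|C|=d$. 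Fix such an edge $uv$ with $u\in A$ and $v\in B$. Normalising an equivalent realization $(H,q)$ to agree with $p$ on $C$, global rigidity of the cliques $A\cup C$ and $B\cup C$ forces $q|_{A\cup C}\in\{p,\sigma\circ p\}$ and $q|_{B\cup C}\in\{p,\sigma\circ p\}$, because the only isometries fixing $C$ pointwise are $\mathrm{id}$ and $\sigma$. If both blocks use the same choice, then $q=p$ or $q=\sigma\circ p$ on $V(H)$, both congruent to $p$; the two mixed choices are excluded because the length of $uv$ distinguishes them, as $\|p(u)-\sigma p(v)\|\ne\|p(u)-p(v)\|$ generically (the squared difference is a nonzero multiple of the product of the signed distances of $p(u)$ and $p(v)$ from $\Pi$, both nonzero by genericity). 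Hence $H$ is globally rigid, and therefore complete.

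The step I expect to be the main obstacle is precisely this $|C|=d$ case: both securing the $A$--$B$ edge from $(d+1)$-connectivity via the dimension-free version of the separating-set lemma, and, above all, the reflection bookkeeping showing that the single cross edge $uv$ rules out the two opposite-side configurations. Care is needed to confirm that genericity really places every vertex of $A\cup B$ off the hyperplane $\Pi$, so that the normal components entering the length computation are nonzero and the mixed configurations are genuinely excluded.
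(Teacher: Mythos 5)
Your proof is correct and follows essentially the same route as the paper: the paper reduces to the $d=2$ case of Lemma~\ref{separ in tloose subgraph}(c), which likewise splits on whether $|V(S_1)\cap V(S_2)|$ exceeds $d$, uses parts (a)/(b) of that lemma together with $(d+1)$-connectivity to extract a cross edge when the intersection has size exactly $d$, and concludes that the union induces a globally rigid, hence (by total looseness) complete, subgraph. The only difference is that you spell out the reflection argument establishing global rigidity of the $d$-clique sum of two cliques plus a cross edge, a step the paper leaves implicit (for $d=2$ it can also be read off from Theorem~\ref{thm}).
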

\begin{proof}
The proof is the same as for $d=2$.
\end{proof}

Let $G=(V,E)$ be a totally loose chordal graph, $u,v\in V$ with $uv\notin E$. Let $P$ denote the minimal path in $T_G$ for which $\{u,v\}\subseteq G[P]$ and let $X$ and $Y$ denote the end-vertices of $P$. 
We denote $XY$ by $T_G\langle uv\rangle$.
 For a set $E'$ of edges in the complement of $G$, let $T_G\langle E'\rangle=\{T_G\langle uv\rangle\}_{uv\in E'}$.
\begin{theorem}\label{chordalGR}
Let $G=(V,E)$ be a $d$-connected chordal graph with $|V|\geq d+2$ and let $E'$ be a set of edges in the complement of $G$. Then the following are equivalent.
\begin{enumerate}[(i),noitemsep,topsep=0pt]
\begin{samepage}
\item $G+E'$ is globally rigid in $\R^d$.
\item $G+E'$ is $d+1$-connected.
\item For every $Q\in \mathcal{Q}_{\tlc_d(G)}$, $Q$ is not a cut vertex in $T_{\tlc_d(G)}+T_{\tlc_d(G)}\langle E'\rangle$.
\end{samepage}
\end{enumerate}
\end{theorem}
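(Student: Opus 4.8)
The plan is to establish the two equivalences (i) $\Leftrightarrow$ (ii) and (ii) $\Leftrightarrow$ (iii) separately. Throughout I would work with the totally loose closure $\bar G=\tlc_d(G)$, which by Lemma \ref{chordal char} is a $d$-connected chordal graph that is the clique sum of complete graphs along cliques of size exactly $d$, so that its tree representation $T_{\bar G}=(\mathcal{S}_{\bar G}\cup\mathcal{Q}_{\bar G},F)$ is available. Two reductions make this legitimate. First, by Lemma \ref{J}(a), $G+E'$ is globally rigid if and only if $\bar G+E'$ is. Second, since adding edges cannot destroy weak global linkedness (a pairwise distance that is determined in a framework stays determined after imposing extra constraints), the operator $\tlc_d$ is monotone, and combined with Lemma \ref{J} this yields $\tlc_d(G+E')=\tlc_d(\bar G+E')$; I would record this equality at the outset. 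Note also that in the chordal setting every building block is a complete graph, so the family $\mathcal{H}$ appearing in Theorem \ref{glob rigid augmentation lemma} is empty and only a cut-vertex type condition survives, which is exactly (iii).

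For (ii) $\Leftrightarrow$ (iii) I would argue purely combinatorially. The key observation is that the edges of $\bar G$ not already in $G$ lie inside the maximal cliques of $\bar G$, and no such maximal clique can straddle a $d$-separator $Q\in\mathcal{Q}_{\bar G}$ (otherwise $Q$ would separate a clique, which is impossible). Consequently the components of $G-Q$, of $\bar G-Q$, and the subtrees of $T_{\bar G}-Q$ are in natural bijection. Since $G$ is chordal and $d$-connected, its minimal separators are cliques of size at least $d$, and the size-$d$ ones are precisely the members of $\mathcal{Q}_{\bar G}$; these are the only separators relevant to $(d+1)$-connectivity. Finally, for $uv\in E'$ the edge $uv$ joins two different components of $G-Q$ exactly when $Q$ lies on the path in $T_{\bar G}$ defining $T_{\bar G}\langle uv\rangle$, i.e.\ exactly when $T_{\bar G}\langle uv\rangle$ joins two different subtrees of $T_{\bar G}-Q$. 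Hence $E'$ reconnects $G-Q$ if and only if $Q$ is not a cut vertex in $T_{\bar G}+T_{\bar G}\langle E'\rangle$, and ranging over all $Q\in\mathcal{Q}_{\bar G}$ yields (ii) $\Leftrightarrow$ (iii).

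For (i) $\Leftrightarrow$ (ii), the forward implication is the standard necessary condition that a globally rigid graph in $\R^d$ on at least $d+2$ vertices is $(d+1)$-connected (see, e.g., \cite{JW}). For (ii) $\Rightarrow$ (i) I would pass to $H=\tlc_d(G+E')=\tlc_d(\bar G+E')$, which is totally loose and, being a supergraph of $G+E'$, is $(d+1)$-connected. I would then run a maximal-subtree merging argument on $T_{\bar G}$ inside $H$: let $T_0$ be a maximal subtree for which $V(\bar G[T_0])$ induces a clique in $H$; if $T_0\ne T_{\bar G}$, an adjacent clique-vertex $W$ meets $V(\bar G[T_0])$ in a $d$-separator, so $V(\bar G[T_0])$ and $V(W)$ are two cliques of $H$ sharing at least $d$ vertices, whence Lemma \ref{separd} makes their union a clique, contradicting maximality. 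Thus $T_0=T_{\bar G}$, so $H$ is complete and $G+E'$ is globally rigid by Lemma \ref{J}(b).

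The main obstacle, I expect, is the bookkeeping that licenses Lemma \ref{separd}: one must be certain that the ambient graph is genuinely $(d+1)$-connected and totally loose, which is why establishing (ii)—and hence the $(d+1)$-connectivity of $H$—before invoking the merging argument is essential, as using Lemma \ref{separd} to derive $(d+1)$-connectivity directly would be circular. The other delicate point is the identification $\tlc_d(G+E')=\tlc_d(\bar G+E')$ together with the claim that the extra edges of $\bar G$ never cross a $d$-separator; both are needed so that the chordal clique tree $T_{\bar G}$ faithfully controls the component structure of $G-Q$ and the clique structure of $H$.
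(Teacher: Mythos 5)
Your proposal is correct and follows essentially the same route as the paper: the substantive direction is handled by the identical maximal-subtree merging argument on the clique tree of $\tlc_d(G)$, using Lemma \ref{separd} to absorb an adjacent node into a maximal clique-inducing subtree and concluding via Lemma \ref{J} that $\tlc_d(G+E')$ is complete (the paper runs this as (iii)$\Rightarrow$(i) rather than (ii)$\Rightarrow$(i), but with (ii)$\Leftrightarrow$(iii) in hand this is the same cycle). The only differences are cosmetic: you spell out the (ii)$\Leftrightarrow$(iii) bookkeeping that the paper dismisses as obvious, and you insert the (correct but unnecessary) identity $\tlc_d(G+E')=\tlc_d(\tlc_d(G)+E')$, which the paper avoids by invoking Lemma \ref{J} directly.
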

\begin{proof}
(i) $\Rightarrow$ (ii) follows from the fact that every globally rigid graph on at least $d+2$ vertices is $d+1$-connected.

(ii) $\Leftrightarrow$ (iii) is obvious.

(iii) $\Rightarrow$ (i) By Lemma \ref{J}, we may assume that $G=\tlc_d(G)$. %
Let $T_0$ be a maximal subtree of $T_G$ for which $V(G[T_0])$ induces a clique in $\tlc(G+E')$. Suppose, for a contradiction, that $T_0\neq T_G$. Let $X\in V(T_G)-V(T_0)$ be adjacent to $T_0$. Let $U_1=V(G[T_0])$ and $U_2=V(X)$. Both $U_1$ and $U_2$ induce a clique in $\tlc(G+E')$. Hence, $U_1\cup U_2$ also induces a clique in $\tlc(G+E')$ by Lemma \ref{separd}, which contradicts the maximality of $T_0$. Thus $T_0=T_G$ and $\tlc(G+E')$ is a complete graph. By Lemma \ref{J}, $G+E'$ is globally rigid.
\end{proof}

By Lemma \ref{chordalGR}, the $d$-dimensional cheapest globally rigid supergraph problem for $d$-connected chordal input graphs can be reduced to Problem B from Section \ref{sec:cheapest}, for which we have a 2-approximation algorithm by Lemma \ref{2approxB}.

\section{Algorithmic aspects}
\label{sec:algo}

In this section we verify the polynomial running time of our 5-approximation algorithm Algorithm 1 for the cheapest globally rigid supergraph problem in $\R^2$.

Given the input graph $G=(V,E)$ and the cost function $c$ on the non-edges of $G$, our algorithm has five main steps:
(i) find a cheapest rigid supergraph $\bar G$ of $G$,
(ii) find
the totally loose closure $\tlc_2(\bar G)$, (iii) construct the tree representation $T$ of $\tlc_2(\bar G)$,
(iv) find a 2-approximation for an instance of Problem B with input $T$,
(v) find a 2-approximation for an instance of Problem C with input $T$. 
We shall argue that each of these steps can be implemented in polynomial time. Let $n=|V|$ and $m=|E|$.

Step (i) can be formulated as finding a minimum cost base in the 2-dimensional rigidity matroid of
the complete graph. 
Hence it can be solved by the greedy algorithm in $O(|V||E|)$ time, see e.g., \cite{Jmemoirs}.
It follows from Theorem \ref{thm:onestep} that $\tlc_2(\bar G)$ can be obtained from $\bar G$ by
adding every edge $uv$ for which $\{u,v\}$ is weakly globally linked in $\bar G$. It was shown in \cite{wgl} that
testing the weak global linkedness of a given pair can be done in $O(|V|^2)$ time. Thus step (ii) can
be performed in $O(|V|^4)$ steps.

Let us consider step (iii). For simplicity let $G'=\tlc_2(\bar G)$.
The tree representation $T$ of $G'$ can be obtained by repeatedly separating the graph along 2-separators and
(when the graph is 3-connected) along separating maximal cliques, and then merging the smaller trees as described
in the proof of Lemma \ref{unique min standard}. 
It is well-known that we can find all 2-separators in linear time. Recall that 
each 3-block of $G'$ is a triangle or a rigid 3-connected graph by
Lemma \ref{lem:triangleor3conn}. To see that the separating maximal cliques can be found quickly, and their number is limited,
we use Lemma \ref{separ in tloose subgraph}(c). It implies that for each edge $e$ of $G'$ there is a unique maximal clique $S_e$ that
contains $e$. Hence each member of the collection ${\cal S}=\{ S_e : e\in E(G')\}$ can be obtained in a greedy manner,
$|{\cal S}|\leq |E(G')|$, and the maximal clique found, in every iteration, must belong to ${\cal S}$.
This gives rise to an $O(|V||E|)$ algorithm for constructing $T$.

Before performing steps (iv) and (v) we compute the corresponding cost function $c_T$ (which is easy).
Then we first solve Problem B by using the reduction described in Lemma \ref{2approxB} and calling
the algorithm of \cite{KhRa} for the cheapest 2-vertex-connected supergraph problem.
Finally, we solve Problem C by the dynamic programming algorithm in linear time,
following the proof of Lemma \ref{2approxC}.

\section*{Acknowledgements}
This work was supported by the Hungarian Scientific Research Fund provided by the National Research, Development and Innovation Office, grant No. K135421. %
The first author was also supported in part by the MTA-ELTE Momentum Matroid Optimization Research Group and the
National Research, Development and Innovation Fund of Hungary, financed under the ELTE
TKP 2021‐NKTA‐62 funding scheme.

\end{document}